\titleformat{\section}{\normalsize\bfseries}{\thesection}{1em}{}
\titleformat{\subsection}{\normalsize\bfseries}{\thesubsection}{1em}{}
\theoremstyle{plain}
\newtheorem{PropSub}[subsection]{Proposition}
\newtheorem{LemSub}[subsection]{Lemma}
\newtheorem{CorSub}[subsection]{Corollary}
\newtheorem{ThmSub}[subsection]{Theorem}
\theoremstyle{definition}
\newtheorem{DefSub}[subsection]{Definition}
\newtheorem{ExaSub}[subsection]{Example}
\newtheorem{RemSub}[subsection]{Remark}
\DeclareMathOperator{\Ev}{Ev}
\DeclareMathOperator{\pint}{\sqint}
\newcommand{\SM}{\ensuremath{S}}
\newcommand{\A}{\ensuremath{\mathcal{A}}}
\newcommand{\B}{\ensuremath{\mathcal{B}}}
\newcommand{\C}{\ensuremath{\mathcal{C}}}
\newcommand{\I}{\ensuremath{\mathcal{I}}}
\newcommand{\M}{\ensuremath{\mathcal{M}}}
\newcommand{\MM}{\ensuremath{\mathbb{M}}}
\newcommand{\LL}{\ensuremath{\mathbb{L}}}
\newcommand{\RR}{\ensuremath{\mathbb{R}}}
\newcommand{\NN}{\ensuremath{\mathbb{N}}}
\newcommand{\Born}{\ensuremath{\operatorname{\textnormal{\textbf{Born}}}}}
\newcommand{\BornMeas}{\ensuremath{\operatorname{\textnormal{\textbf{BornMeas}}}}}
\newcommand{\Meas}{\ensuremath{\operatorname{\textnormal{\textbf{Meas}}}}}
\newcommand{\Set}{\ensuremath{\operatorname{\textnormal{\textbf{Set}}}}}
\newcommand{\RVect}{\ensuremath{\operatorname{\textnormal{\textbf{$\RR$-Vect}}}}}
\newcommand{\Ban}{\ensuremath{\operatorname{\textnormal{\textbf{Ban}}}}}
\newcommand{\RConvBvs}{\ensuremath{\operatorname{\textnormal{\textbf{$\RR$-ConvBvs}}}}}
\newcommand{\subs}{\ensuremath{\subseteq}}
\newcommand{\Powset}{\ensuremath{\mathcal{P}}}
\newcommand{\id}{\ensuremath{\mathsf{id}}}
\newcommand{\lt}{\leqslant}
\newcommand{\gt}{\geqslant}
\begin{document}

\author{\normalsize  Rory B.B. Lucyshyn-Wright\thanks{Partial financial assistance by the Ontario Graduate Scholarship program is gratefully acknowledged.}\\
\small York University, 4700 Keele St., Toronto, ON, Canada M3J 1P3}

\title{\large \textbf{Algebraic theory of vector-valued integration}}

\date{}

\maketitle

\abstract{
We define a monad $\MM$ on a category of \textit{measurable bornological sets}, and we show how this monad gives rise to a theory of vector-valued integration that is related to the notion of \textit{Pettis integral}.  We show that an \textit{algebra} $X$ of this monad is a bornological locally convex vector space endowed with operations which associate vectors $\int f\;d\mu$ in $X$ to incoming maps $f:T \rightarrow X$ and measures $\mu$ on $T$.  We prove that a Banach space is an $\MM$-algebra as soon as it has a Pettis integral for each incoming bounded weakly-measurable function.  It follows that all separable Banach spaces, and all reflexive Banach spaces, are $\MM$-algebras.
}

\section{Introduction}

A fundamental paradigm of algebra has been the process of abstraction whereby the form and governing equations of the familiar operations of arithmetic have been isolated, yielding abstract notions, such as those of abelian group, ring, and vector space, of which the real numbers are an example in the company of others.  It is the aim of this paper to proceed analogously with regard to the operations $f \mapsto \int f \;d\mu$ of Lebesgue integration with respect to measures $\mu$.  We provide an equational axiomatization of such operations, thereby defining a general algebraic notion of a space in which integrals may take their values.

The usual real-valued Lebesgue integration can indeed be construed as a family of operations
$$\Omega^T_\mu : \RR^T \rightarrow \RR\;,\;\;f \mapsto \int f\;d\mu$$
associated to measurable spaces $T$ and measures $\mu$ thereon, where $\RR^T$ is a suitable set of real-valued functions $f:T \rightarrow \RR$, each of which may be regarded as a $T$-indexed family of points in $\RR$ to which the given $T$-ary operation may be applied.  We axiomatize a notion of a space $X$ equipped with an analogous family of operations $\Omega^T_\mu:X^T \rightarrow X$, again written as $f \mapsto \int f\;d\mu = \int_{t \in T}f(t) \;d\mu$, satisfying certain equations.

The presence of the integration operations carried by such a space $X$ entails in particular that $X$ will carry the structure of a vector space over the reals, even though our axiomatization does not directly mandate this.  Rather, linear combinations $a_1 x_1 + ... + a_n x_n$ of elements $x_i \in X$ may be taken by considering the discrete space $T := \{1,...,n\}$ and forming an integral
$$a_1 x_1 + ... + a_n x_n := \int_{t \in T} x_t \;d\mu$$
with respect to a corresponding linear combination $\mu := a_1 \delta_1 + ... + a_n \delta_n$ of \textit{Dirac measures} $\delta_t$.  We thus \textit{define} associated vector space operations in terms of the given operations of integration, and the equational laws which must be satisfied by these derived vector space operations are then entailed by those governing the integration operations.

Hence, we provide a fresh perspective on \textit{vector-valued integration}, a subject which was a central motivation for the development of Banach space theory in the 1930s \cite{DU}.  The subject reached an apparent apex of generality with the introduction of the \textit{Pettis integral} in 1938 \cite{Pe}, the modern understanding of which has flourished since the late 1970s and the work of Edgar and Talagrand; see \cite{T}.  We show that spaces having sufficient Pettis integrals, such as reflexive or separable Banach spaces, provide examples of our general algebraic notion.

Our equational axiomatization is achieved by defining a \textit{monad} $\MM$ which concisely and canonically captures the syntax and equational laws of our theory of vector-valued integration.  Monads are a cornerstone of the category-theoretic approach to Birkhoff's general algebra initiated by Lawvere \cite{La}.  Categories of finitary algebras can be presented elegantly and canonically both in terms of Lawvere's \textit{algebraic theories} and, alternatively, as the categories of Eilenberg-Moore algebras \cite{EM} of \textit{finitary monads} on the category $\Set$ of sets (see, e.g., \cite{PT} or \cite{ARV}).  The connection between algebraic theories and monads was elucidated by Linton \cite{Li1,Li2,Li3}, who showed that monads axiomatize not only finitary but also infinitary algebras.  Further, Linton showed that even monads on arbitrary abstract categories, rather than $\Set$, also give rise to categories of algebras defined by the operations and equations of an associated (generalized) algebraic theory.

In our case, the monad $\MM = (M,\delta,\kappa)$ is defined on a category $\BornMeas$ of sets $X$ equipped with both a sigma-algebra and a \textit{bornology}, which is a system of subsets of $X$ that are said to be \textit{bounded} (see, e.g., \cite{HN}).  The functor $M:\BornMeas \rightarrow \BornMeas$ associates to $X$ the set $MX$ of all finite signed measures which are, in a suitable sense, supported by a bounded subset of $X$.  An Eilenberg-Moore algebra $(X,c)$ of $\MM$ consists of an object $X \in \BornMeas$ together with a boundedness-preserving measurable map $c:MX \rightarrow X$ making certain diagrams commutative.  Such an Eilenberg-Moore algebra can be described alternatively as an algebra of the associated algebraic theory (of the type of Linton \cite{Li1,Li2,Li3}), and hence carries \textit{operations}
$$\Omega_\mu^T:\BornMeas(T,X) \rightarrow X$$
of each \textit{arity} $T \in \BornMeas$ associated to each measure $\mu \in MT$, given in terms of $c:MX \rightarrow X$ via $\Omega_\mu^T(f) = c \circ Mf(\mu)$ for each morphism $f:T \rightarrow X$ in $\BornMeas$.  These we construe as the operations of integration valued in $X$, defining
\begin{equation}\label{eq:integ_via_struct_map}\int f \;d\mu := \Omega_\mu^T(f) = c \circ Mf(\mu)\;.\end{equation}
Moreover, such an algebra also carries \textit{multiply-valued} operations
$$\Omega_\mu^T:\BornMeas(T,X) \rightarrow \BornMeas(S,X)\;,$$
for each $T,S \in \BornMeas$ and each $\BornMeas$-morphism $\mu_{(-)}:S \rightarrow MT$, which send each $f:T \rightarrow X$ to the $S$-indexed family of integrals $\int f \;d\mu_s$.

All these operations of integration $\Omega_\mu^T$ reduce to the single \textit{structure map} $c:MX \rightarrow X$, which sends each measure $\mu \in MX$ to the integral 
$$\int \id_X \;d\mu = \Omega_\mu^X(\id_X) = c(\mu)$$
of the identity map $\id_X:X \rightarrow X$ with respect to $\mu$.  Indeed, this is the case in our primordial example of an $\MM$-algebra, the real line $\RR$, as each integral $\int f \;d\mu$ of a real-valued map $f:X \rightarrow \RR$ reduces to an integral $\int \id_{\RR} \;d\,Mf(\mu)$ with respect to the \textit{direct-image measure} $Mf(\mu)$ on $\RR$.  Any power $\RR^n$ of $\RR$ is also an $\MM$-algebra, and, for example, if we let $\mu$ be the probability measure associated to a uniform distribution across a measurable subset $E \subs \RR^n$, then $c(\mu) = \int \id_{\RR^n}\;d\mu$ will be the \textit{geometric centre} of $E$; for an arbitrary probability measure $\mu$ on $\RR^n$, $c(\mu)$ is the \textit{barycentre} or \textit{centre of mass} of $\mu$.

It may be remarked that our syntactic theory of vector-valued integration actually incorporates, and depends upon, real-valued (Lebesgue) integration.  Indeed, $\RR$, endowed with the Lebesgue integral, is itself a \textit{free} algebra $\RR \cong M1$ of $\MM$.  This is analogous to the situation of the algebraic theory of vector spaces over $\RR$, which similarly depends upon the addition and (scalar) multiplication operations of $\RR$.

Our monad $\MM$ is a variation on the Giry-Lawvere monad of probability measures on the category of measurable spaces \cite{G}.  As we employ signed real-valued measures, rather than probability measures, and consequently must also introduce notions of boundedness via bornologies, the definition of $\MM$ and the proof that $\MM$ is a monad are much more involved than those pertaining to the Giry-Lawvere monad.  Some of the lemmas we employ are similar in form to those of \cite{G}, but their proofs are much more difficult, owing to these added concerns of signed measures and boundedness and the need to employ more subtle convergence theorems with regard to signed real-valued integrals.

Giry \cite{G} also introduced a monad of probability measures on \textit{Polish spaces} rather than measurable spaces, and Doberkat \cite{D} characterized the algebras of that monad as certain topological \textit{convex spaces} (i.e. spaces endowed with operations allowing the taking of \textit{convex} combinations) and observed the connection of the structure map of such an algebra to the notion of barycentre.  Doberkat's work was predated by the work of \'Swirszcz \cite{Sw1,Sw2} and Semadeni \cite{Se} with regard to a monad of \textit{regular Borel probability measures} on compact Hausdorff spaces.  \'Swirszcz showed that the algebras of that monad are \textit{compact convex sets} embeddable within locally convex topological vector spaces.  \'Swirszcz also observed the connection to the barycentre or \textit{centroid} of a probability measure.

The recent papers of Kock \cite{K1,K2} on a general framework for \textit{extensive quantities} via monads are also related to our work.  Working with an arbitrary commutative strong monad $\mathbb{T}$ on a cartesian closed category, Kock has independently employed the integral notation as in \eqref{eq:integ_via_struct_map} with respect to a $T$-algebra.  However, Kock employs this notation chiefly in the case of the free algebra $R := T1$, as Kock's aim is clearly to provide a framework for extensive quantities valued in a special object $R$ analogous to the real line.  The examples of monads considered in \cite{K1,K2} are substantially different from our monad $\MM$, and Kock does not construe such monads as providing a theory of vector-valued integration.

\section{The base category: Measurable bornological sets}

\begin{DefSub}
A \textit{bornology} on a set $X$ is an ideal $\B X$ in the powerset $(\Powset X,\subs)$ of $X$ whose union is the entire set $X$.  Hence a bornology is a collection of subsets of $X$, called the \textit{bounded subsets}, that is downward-closed with respect to the inclusion order $\subs$, closed under the taking of finite unions (and hence, in particular, contains $\emptyset$), and contains all singletons.  A \textit{basis} for a bornology on $X$ is an upward-directed subset $\C$ of $\Powset X$ whose union is all of $X$, and for any such basis $\C$, the collection of all subsets of sets in $\C$ constitutes a bornology on $X$, the \textit{bornology generated by} $\C$.  
\end{DefSub}

\begin{DefSub}
A \textit{bornological set} is a set $X$ equipped with a bornology $\B X$.  We denote by $\Born$ the category of bornological sets and \textit{bornological maps}, i.e. functions $f:X \rightarrow Y$ for which the image of any bounded subset of $X$ is a bounded subset of $Y$.
\end{DefSub}

\begin{DefSub}
A \textit{measurable space} is a set $X$ equipped with a sigma-algebra $\sigma X$.  We denote by $\Meas$ the category of measurable spaces and \textit{measurable maps}, i.e. functions $f:X \rightarrow Y$ for which the inverse image of any measurable subset of $Y$ is a measurable subset of $X$.
\end{DefSub}

\begin{DefSub}
A \textit{measurable bornological set} is a set $X$ equipped with both a bornology $\B X$ and a sigma-algebra $\sigma X$.  We denote by $\BornMeas$ the category of measurable bornological sets, with maps that are both measurable and bornological.
\end{DefSub}

\begin{DefSub} \label{def:topological}
Let $P:\A \rightarrow \Set$ be a faithful functor, which we shall view as providing each object $X$ of $\A$ with an \textit{underlying set}, again written $X$.  We identify each hom-set $\A(X,Y)$ with the associated subset of $\Set(X,Y)$.  We say that a family of morphisms $(f_i:X \rightarrow Y_i)_{i \in I}$ in $\A$ is an \textit{initial source} in $\A$ if for any incoming function $g:T \rightarrow X$ defined on the underlying set of an object $T$ of $\A$, $g$ is a morphism in $\A$ as soon as each composite $T \xrightarrow{g} X \xrightarrow{f_i} Y_i$ is a morphism in $\A$.  The dual notion is that of a \textit{final sink}.  A single morphism $f:X \rightarrow Y$ of $\A$ is \textit{initial} if it constitutes an initial source with one element.  We say $P$ is \textit{topological} if for every family $(Y_i)_{i \in I}$ of objects in $\A$, indexed by a class $I$, and every family $(f_i:X \rightarrow Y_i)_{i \in I}$ of morphisms in $\Set$, there is an associated object of $\A$ with underlying set $X$, again written as $X$, with respect to which $(f_i:X \rightarrow Y_i)_{i \in I}$ is an initial source in $\A$.  If $P$ is topological then it follows that $P$ also has the dual property, that of being \textit{cotopological}; see, e.g., \cite{AHS}.
\end{DefSub}

\begin{PropSub} \label{thm:initial_final}
The forgetful functors $\Born \rightarrow \Set$ and $\Meas \rightarrow \Set$ are topological.  In particular, we have the following:
\begin{enumerate}
\item A family of morphisms $(f_i:X \rightarrow Y_i)_{i \in I}$ in $\Born$ is an initial source iff $X$ carries the \emph{initial bornology}, wherein
$$B \subs X \;\text{is bounded} \;\;\Leftrightarrow\;\; \forall i \in I \;:\; f_i(B) \subs Y_i \;\text{is bounded}\;.$$
\item A family of morphisms $(f_i:X \rightarrow Y_i)_{i \in I}$ in $\Meas$ is an initial source iff $X$ carries the \emph{initial sigma-algebra}, generated by the inverse images $f_i^{-1}(F)$ with $i \in I$ and $F \subs Y_i$ measurable.
\item A morphism $i:A \rightarrow X$ in $\Meas$ is initial iff it carries the sigma-algebra $\{ i^{-1}(E) \mid E \subs X \;\text{measurable}\}$.
\end{enumerate}
\end{PropSub}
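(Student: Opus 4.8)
The plan is to establish the topological property directly for each forgetful functor by exhibiting the required initial structures and checking the universal property, and then to read off the characterizations (1)--(3) as corollaries. The existence half of each characterization is essentially the content of ``topological'', while the uniqueness half (``an initial source \emph{must} carry this structure'') will follow from a short argument feeding the identity map into the definition of initiality. The only step that needs more than bookkeeping is the verification that the source constructed for $\Meas$ is initial, where one invokes a ``good sets'' argument.

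For $\Born \to \Set$: given objects $(Y_i)_{i \in I}$ of $\Born$ and functions $(f_i : X \to Y_i)_{i \in I}$ in $\Set$, I would equip $X$ with the collection $\B X$ of all $B \subs X$ such that $f_i(B)$ is bounded in $Y_i$ for every $i$. This is a bornology: it is downward-closed since each $f_i$ is monotone on subsets; it is closed under finite unions since $f_i(B_1 \cup B_2) = f_i(B_1) \cup f_i(B_2)$ and each $\B Y_i$ is closed under finite unions; and it contains every singleton (hence has union $X$) because $f_i(\{x\}) = \{f_i(x)\}$. Each $f_i$ is bornological by construction, and $(f_i)$ is an initial source: if $g : T \to X$ is a function with each $f_i \circ g$ bornological and $B \subs T$ is bounded, then $f_i(g(B)) = (f_i \circ g)(B)$ is bounded for every $i$, so $g(B) \in \B X$ and $g$ is bornological. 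This shows $\Born \to \Set$ is topological and gives the ``if'' direction of (1).

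For $\Meas \to \Set$, I would carry out the analogous construction, equipping $X$ with the sigma-algebra $\sigma X$ \emph{generated} by the class of sets $f_i^{-1}(F)$ with $i \in I$ and $F \in \sigma Y_i$ (this class is a subclass of $\Powset X$, hence a set, so generation is unproblematic even when $I$ is a proper class). Each $f_i$ is then measurable by construction. For initiality, given $g : T \to X$ with each $f_i \circ g$ measurable, consider $\E := \{E \subs X \mid g^{-1}(E) \in \sigma T\}$; since $g^{-1}$ commutes with complementation and countable unions, $\E$ is a sigma-algebra, and it contains each generator $f_i^{-1}(F)$ because $g^{-1}\bigl(f_i^{-1}(F)\bigr) = (f_i \circ g)^{-1}(F) \in \sigma T$. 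Hence $\sigma X \subs \E$, i.e. $g$ is measurable. This is the one point requiring any real care; it yields the ``if'' direction of (2).

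It remains to prove the ``only if'' directions and part (3). Suppose the source $(f_i : X \to Y_i)$ is initial with respect to \emph{some} structure on $X$, and let $X'$ denote the set $X$ equipped with the candidate initial structure constructed above. Because each $f_i : X \to Y_i$ is a morphism, it carries bounded sets to bounded sets (resp. has $f_i^{-1}(F)$ measurable in $X$ for every measurable $F$), so the structure on $X$ is contained in that of $X'$. Conversely, each $f_i \circ \id : X' \to Y_i$ is a morphism by the defining property of $X'$, so initiality of the source on $X$ forces $\id : X' \to X$ to be a morphism, whence the structure of $X'$ is contained in that of $X$; equality follows. Finally, (3) is the special case of (2) for a single morphism $i : A \to X$: the sigma-algebra generated by $\{i^{-1}(E) \mid E \in \sigma X\}$ is that collection itself, since $i^{-1}$ preserves complements and countable unions, so the collection is already a sigma-algebra. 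The cotopological property is then automatic, as cited.
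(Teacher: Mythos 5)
Your proof is correct and complete; the paper itself offers no proof of this proposition, treating it as a standard fact about topological functors (with the general theory deferred to \cite{AHS}), and your constructions of the initial bornology and initial sigma-algebra, the good-sets argument for initiality in $\Meas$, and the identity-map trick for the ``only if'' directions are exactly the standard arguments. The only point worth polishing is the phrase ``contained in'' in the uniqueness step: for bornologies the literal inclusions run $\B X \subs \B X'$ and $\B X' \subs \B X$, while for sigma-algebras they run the opposite way ($\sigma X' \subs \sigma X$ from measurability of the $f_i$, and $\sigma X \subs \sigma X'$ from $\id:X' \to X$ being measurable), so the wording is only uniform if read in the finer/coarser ordering on structures rather than as set inclusion --- but both containments are correctly established in each case, so this is cosmetic.
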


\begin{CorSub} \label{thm:bornmeas_topological}
The forgetful functor $\BornMeas \rightarrow \Set$ is topological.  Initial structures in $\BornMeas$ are gotten by equipping a set with the initial bornology and initial sigma-algebra.  Hence the categories $\BornMeas$, $\Born$, and $\Meas$ are complete and cocomplete, with limits (resp. colimits) formed by endowing the limit (resp. colimit) in $\Set$ with the initial (resp. final) structure.
\end{CorSub}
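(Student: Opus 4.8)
The plan is to reduce everything to Proposition~\ref{thm:initial_final} together with the standard theory of topological functors. First I would verify directly that the forgetful functor $U:\BornMeas \rightarrow \Set$ is topological. Given a class-indexed family $(Y_i)_{i \in I}$ of measurable bornological sets and a family $(f_i:X \rightarrow Y_i)_{i \in I}$ of functions, equip the set $X$ with the initial bornology $\B X$ determined by the $f_i$ (as in Proposition~\ref{thm:initial_final}(1), using the bornologies $\B Y_i$) and the initial sigma-algebra $\sigma X$ determined by the $f_i$ (as in Proposition~\ref{thm:initial_final}(2), using the sigma-algebras $\sigma Y_i$). Since $\Born \rightarrow \Set$ and $\Meas \rightarrow \Set$ are topological, these are genuinely a bornology and a sigma-algebra on $X$, and with respect to them each $f_i$ is simultaneously bornological and measurable, hence a morphism of $\BornMeas$.

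Next I would check initiality. A function $g:T \rightarrow X$ out of the underlying set of an object $T \in \BornMeas$ is a $\BornMeas$-morphism precisely when it is both bornological and measurable. By the initiality clauses of Proposition~\ref{thm:initial_final}(1) and (2), $g$ is bornological iff every composite $f_i \circ g$ is bornological, and $g$ is measurable iff every composite $f_i \circ g$ is measurable. Conjoining these two equivalences, $g$ is a $\BornMeas$-morphism iff each $f_i \circ g:T \rightarrow Y_i$ is a $\BornMeas$-morphism. Thus $(f_i:X \rightarrow Y_i)_{i \in I}$ is an initial source in $\BornMeas$, so $U$ is topological, and initial structures in $\BornMeas$ are, as claimed, obtained by pairing the initial bornology with the initial sigma-algebra.

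Finally, the completeness and cocompleteness assertions follow from the general theory of topological functors (see \cite{AHS}): a topological functor $U:\A \rightarrow \X$ uniquely lifts both limits and colimits from $\X$, so $\A$ inherits completeness and cocompleteness from $\X$, with a limit (resp.\ colimit) computed by taking the limit (resp.\ colimit) in $\X$ and endowing it with the initial structure relative to the limit projections (resp.\ the final structure relative to the colimit injections). Applying this with $\X = \Set$, which is complete and cocomplete, and with $U$ the forgetful functor of $\BornMeas$, $\Born$, or $\Meas$ (all topological --- the latter two by Proposition~\ref{thm:initial_final}), yields the stated description of limits and colimits. I do not anticipate a substantive obstacle: the only point requiring care is keeping straight that ``initial source in $\BornMeas$'' unravels as the conjunction of ``initial source in $\Born$'' and ``initial source in $\Meas$'' on the same underlying data, which is exactly what the two initiality equivalences of Proposition~\ref{thm:initial_final} supply.
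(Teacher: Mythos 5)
Your proof is correct and fills in exactly the argument the paper leaves implicit: the corollary follows by conjoining the initiality characterizations of Proposition~\ref{thm:initial_final}(1) and (2) on a common underlying set, and then invoking the standard fact (cf.\ \cite{AHS} and Definition~\ref{def:topological}) that topological functors lift limits and colimits from the base. This matches the paper's intended route, which offers no further proof.
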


\begin{RemSub} \label{rem:direct_limit_bornology}
For a diagram $D:\I \rightarrow  \Born$ with $\I$ an upward-directed poset, the colimit $Y = \varinjlim_{i \in \I} Di$ in $\Born$ carries the \textit{direct limit bornology}, which has a basis consisting of the images $\lambda_i(B) \subs Y$ of bounded subsets $B \subs Di$ under the colimit injections $\lambda_i:Di \rightarrow Y$.
\end{RemSub}

\begin{RemSub}
Given a subset $A \subs X$ of a bornological set, measurable space, or measurable bornological set $X$, we implicitly endow $A$ with the initial bornology and/or sigma-algebra induced by the inclusion $\iota_{AX}:A \hookrightarrow X$.
\end{RemSub}

\begin{LemSub} \label{thm:BornMeas_functions_vector_space}
For each $X \in \BornMeas$ the set $\BornMeas(X,\RR)$ of all $\BornMeas$-morphisms $X \rightarrow \RR$ is a real vector space under the pointwise operations.
\end{LemSub}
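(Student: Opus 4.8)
The plan is to exhibit $\BornMeas(X,\RR)$ as a linear subspace of the real vector space $\RR^X$ of \emph{all} functions $X \to \RR$ under the pointwise operations. Since $\RR^X$ is evidently a real vector space, it suffices to check three things: that $\BornMeas(X,\RR)$ contains the zero function, that it is closed under pointwise addition, and that it is closed under multiplication by real scalars. All the vector-space axioms --- associativity, commutativity, distributivity, the behaviour of $0$ and $1$ --- are then inherited verbatim from $\RR^X$, hence ultimately from $\RR$ itself, and require no further argument.

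The zero function $0 : X \to \RR$ is measurable, since the preimage of any measurable subset of $\RR$ is $\emptyset$ or $X$, and it is bornological, since the image of any subset of $X$ is $\{0\}$ or $\emptyset$, both of which are bounded. For closure under addition, let $f,g \in \BornMeas(X,\RR)$. Measurability of $f+g$ comes down to the standard fact that a pointwise sum of measurable real-valued functions is measurable: the pairing $\langle f,g\rangle : X \to \RR\times\RR$ is measurable into the product sigma-algebra on $\RR\times\RR$, which by Proposition~\ref{thm:initial_final}(2) is precisely the initial sigma-algebra for the two projections; since $\RR$ is second countable, this product sigma-algebra contains every open subset of $\RR^2$ and hence coincides with the Borel sigma-algebra of the plane, relative to which addition $\RR^2 \to \RR$ is continuous and therefore measurable, so the composite $f+g$ is measurable. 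For the bornological condition, given a bounded $B \subs X$ the images $f(B)$ and $g(B)$ are bounded in $\RR$, say contained in $[-m,m]$ and $[-n,n]$ respectively, and then $(f+g)(B) \subs f(B)+g(B) \subs [-(m+n),m+n]$ is bounded.

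Closure under scalar multiplication is similar and easier: for $a \in \RR$, the function $af$ is the composite of $f$ with the continuous --- hence Borel measurable --- self-map $t \mapsto at$ of $\RR$, so $af$ is measurable; and $(af)(B) = a\cdot f(B)$ is a scalar multiple of a bounded subset of $\RR$, hence bounded, so $af$ is bornological. This establishes that $\BornMeas(X,\RR)$ is a linear subspace of $\RR^X$, and therefore a real vector space under the pointwise operations.

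I do not expect any genuine obstacle here; the statement is essentially a bookkeeping exercise. The one point deserving a moment's care is the measurability of $f+g$, where one should invoke second countability of $\RR$ rather than silently identifying the product sigma-algebra on $\RR\times\RR$ with the Borel sigma-algebra of the plane. Everything else follows immediately from the definitions of measurable and bornological map together with the evident stability of the usual bornology on $\RR$ under sums and scalar multiples.
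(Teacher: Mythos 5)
Your proof is correct. The paper states this lemma without proof, treating it as routine, and your argument is precisely the one it leaves implicit: closure of $\BornMeas(X,\RR)$ under the pointwise operations inside $\RR^X$, with the only non-trivial point being measurability of $f+g$ via the identification of the product sigma-algebra on $\RR\times\RR$ with the Borel sigma-algebra of the plane --- the very fact the paper itself records later, with the same justification, in Lemma~\ref{thm:R_ring_obj_in_BornMeas}. No gaps.
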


\section{Finite signed measures}

\begin{DefSub} \label{def:SX}
For each measurable space $X$, we let $\SM X$ be the set of all finite signed measures on $X$, and we endow $\SM X$ with the initial sigma-algebra induced by the \textit{evaluation maps}
$$\Ev_E:\SM X \rightarrow \RR\;,\;\mu \mapsto \mu(E)$$
associated to measurable subsets $E \subs X$.  There is a functor $\SM:\Meas \rightarrow \Meas$ which associates to each measurable map $f:X \rightarrow Y$ the map $\SM f:\SM X \rightarrow \SM Y$ sending each measure $\mu \in \SM X$ to the \textit{direct image} $\SM f(\mu) \in \SM Y$ \textit{of $\mu$ along $f$}, given by $(\SM f(\mu))(F) = \mu(f^{-1}(F))$ for each measurable $F \subs Y$.  Indeed, $\SM f$ is measurable since each composite
$$\SM X \xrightarrow{\SM f} \SM Y \xrightarrow{\Ev_F} \RR\;,\;\;\;\;\text{$F \subs Y$ measurable}\;, $$
is equal to the measurable map $\Ev_{f^{-1}(F)}:\SM X \rightarrow \RR$.
\end{DefSub}

\begin{PropSub}
For each measurable space $X$, $\SM X$ is a real vector space under the setwise operations, and for each measurable map $f:X \rightarrow Y$, $\SM f:\SM X \rightarrow \SM Y$ is a linear map.  Hence we obtain a functor $\Meas \rightarrow \RVect$.
\end{PropSub}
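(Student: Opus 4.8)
The plan is to realize $\SM X$ as a linear subspace of the product vector space $\RR^{\sigma X}$ of all real-valued functions on $\sigma X$, taken with pointwise operations. Since the setwise operations on finite signed measures are exactly the restrictions of these pointwise operations, every vector space axiom is inherited as soon as one checks that $\SM X$ is closed under setwise sums and scalar multiples and contains the zero measure. The zero measure and closure under scalar multiples are immediate: if $\mu \in \SM X$ and $a \in \RR$, then $a\mu$ is real-valued and, for a measurable partition $E = \bigsqcup_n E_n$, termwise scaling of the convergent series $\sum_n \mu(E_n) = \mu(E)$ gives $(a\mu)(E) = \sum_n (a\mu)(E_n)$. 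For closure under addition, the essential point (and the only one requiring any care) is that finite signed measures take values in $\RR$, so that for $\mu, \nu \in \SM X$ the setwise sum $\mu + \nu$ is a well-defined real-valued function with no indeterminate forms; given a measurable partition $E = \bigsqcup_n E_n$, the real series $\sum_n \mu(E_n)$ and $\sum_n \nu(E_n)$ converge, hence so does their termwise sum, to $\mu(E) + \nu(E) = (\mu + \nu)(E)$, which is countable additivity of $\mu + \nu$.

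Next I would check that $\SM f$ is linear for each measurable $f : X \to Y$. By Definition~\ref{def:SX} we already know $\SM f : \SM X \to \SM Y$ is a well-defined map (the direct image of a finite signed measure is again one), sending $\mu$ to $F \mapsto \mu(f^{-1}(F))$. For $\mu, \nu \in \SM X$, $a, b \in \RR$, and any measurable $F \subs Y$, a direct computation using the setwise operations on $\SM X$ and $\SM Y$ gives $(\SM f(a\mu + b\nu))(F) = (a\mu + b\nu)(f^{-1}(F)) = a\,\mu(f^{-1}(F)) + b\,\nu(f^{-1}(F)) = (a\,\SM f(\mu) + b\,\SM f(\nu))(F)$; as $F$ is arbitrary, $\SM f(a\mu + b\nu) = a\,\SM f(\mu) + b\,\SM f(\nu)$.

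Finally, Definition~\ref{def:SX} already exhibits $\SM$ as a functor $\Meas \to \Meas$, so $\SM(\id_X) = \id_{\SM X}$ and $\SM(g \circ f) = \SM g \circ \SM f$ hold as equalities of functions, hence as equalities of linear maps; since the forgetful functor $\RVect \to \Set$ is faithful, these together with the linearity just established show that $X \mapsto \SM X$, $f \mapsto \SM f$ is a functor $\Meas \to \RVect$. I do not expect any genuine obstacle: the single delicate point is the closure of $\SM X$ under setwise addition, which reduces to the elementary fact that the termwise sum of two convergent real series converges to the sum of the limits, once finiteness of the measures has been used to exclude indeterminate forms.
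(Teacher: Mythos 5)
Your argument is correct, and the paper in fact states this proposition without proof, treating it as routine; your verification (closure of $\SM X$ under setwise operations inside $\RR^{\sigma X}$, with countable additivity of sums and scalar multiples reduced to termwise limits of convergent real series, plus the direct computation of linearity of $\SM f$) is exactly the standard argument one would supply. The only point worth noting is that some definitions of countable additivity for signed measures demand absolute (equivalently unconditional) convergence of $\sum_n \mu(E_n)$; this too is preserved under sums and scalar multiples by the triangle inequality, so nothing in your proof is affected.
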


\begin{DefSub} \label{def:total_var}
Let $X$ be a measurable space.  For each $\mu \in \SM X$, we denote by $(\mu^+,\mu^-)$ the Jordan decomposition of $\mu$.  The \textit{total variation} of $\mu$ is the real number
$$||\mu|| = |\mu|(X)\;,$$
where $|\mu| = \mu^+ + \mu^- \in \SM X$ is the \textit{total variation measure} associated to $\mu$.
\end{DefSub}

\begin{RemSub} \label{rem:total_var_norm}
For a measurable space $X$, Definition \ref{def:total_var} endows the real vector space $\SM X$ with a norm, the \textit{total variation norm}, under which $\SM X$ is a Banach space (e.g., by Exercise 1.28 of \cite{Me}).
\end{RemSub}

\begin{PropSub} \label{thm:N_yields_contractions}
For each measurable map $f:X \rightarrow Y$ and each $\mu \in \SM X$, we have
$$||\SM f(\mu)|| \lt ||\mu||\;,$$
so that $\SM f:\SM X \rightarrow \SM Y$ is a linear contraction.  Hence we obtain a functor $\Meas \rightarrow \Ban_1$ into the category $\Ban_1$ of real Banach spaces and linear contractions. 
\end{PropSub}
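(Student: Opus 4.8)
The plan is to decompose $\mu$ into its positive and negative parts and to exploit the fact, recorded in Remark~\ref{rem:total_var_norm}, that $||\cdot||$ is a genuine norm on $\SM Y$. Since the direct image of a nonnegative measure along $f$ is again nonnegative, applying $\SM f$ to the Jordan decomposition $\mu = \mu^+ - \mu^-$ exhibits $\SM f(\mu)$ as a difference $\SM f(\mu^+) - \SM f(\mu^-)$ of nonnegative finite measures on $Y$. For a nonnegative measure $\lambda$ one has $||\lambda|| = \lambda(Y)$, so the triangle inequality for $||\cdot||$ gives
$$||\SM f(\mu)|| \lt ||\SM f(\mu^+)|| + ||\SM f(\mu^-)|| = (\SM f(\mu^+))(Y) + (\SM f(\mu^-))(Y)\;.$$
By the definition of the direct image, $(\SM f(\mu^\pm))(Y) = \mu^\pm(f^{-1}(Y)) = \mu^\pm(X)$, while $\mu^+(X) + \mu^-(X) = |\mu|(X) = ||\mu||$; combining these, $||\SM f(\mu)|| \lt ||\mu||$, as required.

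Alternatively, and without reference to the Jordan decomposition, one may use the variational characterization $||\nu|| = \sup \sum_j |\nu(F_j)|$, the supremum ranging over finite measurable partitions $\{F_1,\dots,F_n\}$ of $Y$. For any such partition, the family $\{f^{-1}(F_1),\dots,f^{-1}(F_n)\}$ is a finite measurable partition of $X$, whence $\sum_j |(\SM f(\mu))(F_j)| = \sum_j |\mu(f^{-1}(F_j))| \lt ||\mu||$, and taking the supremum over all partitions of $Y$ yields the bound. Either way, the entire content of the estimate is the elementary observation that pullback sends measurable partitions to measurable partitions (equivalently, that $\SM f$ respects positivity), so I do not anticipate any real obstacle here; the only point requiring care is to invoke the correct standard fact about the total variation — that it is a norm, or that it admits the partition supremum formula.

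Finally, for the concluding assertion I would assemble the functor as follows. We already have the functor $\SM : \Meas \to \RVect$ assigning to each measurable map a linear map; Remark~\ref{rem:total_var_norm} upgrades each $\SM X$ to a Banach space; and the contraction estimate just established shows that each linear map $\SM f$ is a morphism of $\Ban_1$. Since identities and composites in $\Ban_1$ are computed underlying-set-wise, exactly as in $\RVect$ and $\Meas$, functoriality is inherited with no further work, and we obtain the desired functor $\Meas \to \Ban_1$.
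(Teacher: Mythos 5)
Your proof is correct; both of your arguments close the estimate, and the functoriality remark at the end is accurate. The route is genuinely, if mildly, different from the paper's. The paper takes a Hahn decomposition $(P,N)$ of the \emph{image} measure $(Y,\SM f(\mu))$ and bounds each variation separately, via $(\SM f(\mu))^+(Y) = \mu(f^{-1}(P)) \lt \mu^+(f^{-1}(P)) \lt \mu^+(X)$ and its negative counterpart; this yields the slightly sharper, componentwise information that $\SM f$ contracts the positive and negative variations individually, not merely their sum. Your first argument instead pushes forward the Jordan decomposition of the \emph{source} measure and appeals to the triangle inequality for the total variation norm; the only point worth being conscious of is that $(\SM f(\mu^+),\SM f(\mu^-))$ need not be the Jordan decomposition of $\SM f(\mu)$ (mutual singularity can be destroyed), but you never claim it is, and the triangle inequality does not require it. Your second, partition-supremum argument is arguably the most elementary of the three, since it needs neither decomposition explicitly, only the variational formula $||\nu|| = \sup\sum_j|\nu(F_j)|$ and the observation that preimages of a finite measurable partition of $Y$ form one of $X$. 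All three arguments cost about the same; the paper's buys the componentwise bounds, yours buys independence from the Hahn decomposition machinery.
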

\begin{proof}
Letting $(P,N)$ be a Hahn decomposition for $(Y,\SM f(\mu))$, we have that
$$(\SM f(\mu))^+(Y) = (\SM f(\mu))(P) = \mu(f^{-1}(P)) \lt \mu^+(f^{-1}(P)) \lt \mu^+(X)$$
and similarly $(\SM f(\mu))^-(Y) \lt \mu^-(X)$, from which the needed result follows.
\end{proof}

\section{Some basic lemmas on real-valued integration}

\begin{LemSub} \label{thm:basic_conv_lemma}
Let $f:X \rightarrow \RR$ be a measurable function.  Then there is a sequence $(\theta_i)$ of signed simple functions on $X$ with $|\theta_i| \lt |f|$ and $\theta_i \rightarrow f$ pointwise, such that for any finite signed measure $\mu$ on $X$, if $f$ is $\mu$-integrable then
$$\int f\;d\mu = \lim_i \int \theta_i \;d\mu\;.$$
\end{LemSub}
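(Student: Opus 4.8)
The plan is to produce the sequence $(\theta_i)$ explicitly and independently of $\mu$, by applying the classical dyadic simple-function approximation separately to the positive and negative parts of $f$; then, for each fixed $\mu$, the identity $\int f\,d\mu = \lim_i \int\theta_i\,d\mu$ will follow by decomposing both $f = f^+ - f^-$ and $\mu = \mu^+ - \mu^-$ into positive and negative parts and invoking the Monotone Convergence Theorem four times.

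Concretely, for a nonnegative measurable $g:X\to\RR$ recall the standard increasing sequence of nonnegative simple functions
$$s_i^{(g)} \;:=\; \sum_{k=1}^{i2^i} \tfrac{k-1}{2^i}\,\mathbf{1}_{\{(k-1)/2^i\,\lt\, g\,<\,k/2^i\}} \;+\; i\,\mathbf{1}_{\{g\,\gt\, i\}}\;,$$
which satisfies $0 \lt s_i^{(g)} \lt g$, $s_i^{(g)} \lt s_{i+1}^{(g)}$, and $s_i^{(g)} \to g$ pointwise. I would set $\theta_i := s_i^{(f^+)} - s_i^{(f^-)}$, which is a signed simple function. Since $s_i^{(f^+)}$ and $s_i^{(f^-)}$ are nonnegative with $s_i^{(f^+)} \lt f^+$ and $s_i^{(f^-)} \lt f^-$, we get $|\theta_i| \lt s_i^{(f^+)} + s_i^{(f^-)} \lt f^+ + f^- = |f|$, and pointwise $\theta_i = s_i^{(f^+)} - s_i^{(f^-)} \to f^+ - f^- = f$. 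This establishes the first two assertions, and the bound $|\theta_i|\lt|f|$ also shows that $\theta_i$ is $\mu$-integrable whenever $f$ is.

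For the integral identity, fix a finite signed measure $\mu$ on $X$ with $f$ $\mu$-integrable, i.e. $\int|f|\,d|\mu|<\infty$, and let $(\mu^+,\mu^-)$ be the Jordan decomposition, so that $\mu^+,\mu^- \lt |\mu|$ as measures; hence $f^+$ and $f^-$, being $\lt |f|$, are integrable with respect to each of $\mu^+$ and $\mu^-$. For $\nu\in\{\mu^+,\mu^-\}$ the function $s_i^{(f^+)}$ is a nonnegative simple function dominated by the $\nu$-integrable function $f^+$, hence $\nu$-integrable, and the Monotone Convergence Theorem gives $\int s_i^{(f^+)}\,d\nu \to \int f^+\,d\nu < \infty$, and similarly $\int s_i^{(f^-)}\,d\nu \to \int f^-\,d\nu < \infty$. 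Subtracting these two finite limits, $\int\theta_i\,d\nu \to \int f^+\,d\nu - \int f^-\,d\nu = \int f\,d\nu$. Applying this with $\nu = \mu^+$ and $\nu = \mu^-$ and subtracting once more,
$$\lim_i\int\theta_i\,d\mu \;=\; \lim_i\Bigl(\int\theta_i\,d\mu^+ - \int\theta_i\,d\mu^-\Bigr) \;=\; \int f\,d\mu^+ - \int f\,d\mu^- \;=\; \int f\,d\mu\;,$$
as required.

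No step here is genuinely hard. The one point to watch is that the sequence $(\theta_i)$ must be fixed before $\mu$ is given; this is precisely what forces one to approximate $f^+$ and $f^-$ separately and \emph{from below} — so that $|\theta_i|\lt|f|$ holds and the Monotone Convergence Theorem applies on each of the four pieces — rather than approximating $f$ itself. One should also keep in mind throughout that "$f$ is $\mu$-integrable" means $\int|f|\,d|\mu|<\infty$, which is exactly the hypothesis that makes all four of the monotone limits above finite.
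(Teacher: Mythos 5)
Your proposal is correct and follows essentially the same route as the paper: approximate $f^+$ and $f^-$ from below by nonnegative simple functions, set $\theta_i$ to be their difference so that $|\theta_i| \lt \varphi_i + \psi_i \lt f^+ + f^- = |f|$, and conclude via the Monotone Convergence Theorem applied across the Jordan decomposition of $\mu$. The paper states this in one sentence; you have merely spelled out the four finite monotone limits explicitly, which is exactly the intended argument.
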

\begin{proof}
We may take some sequences $(\varphi_i)$, $(\psi_i)$ of nonnegative simple functions which converge pointwise from below to $f^+$ and $f^-$, respectively, and then it follows from the Monotone Convergence Theorem that $\theta_i := \varphi_i - \psi_i$ defines a sequence of simple functions with the needed properties, noting that $|\theta_i | \lt \varphi_i + \psi_i \lt f^+ + f^- = |f|$.
\end{proof}

\begin{PropSub} \label{thm:integ_of_composite}
Let $X \xrightarrow{f} Y \xrightarrow{g} \RR$ be measurable maps, and let $\mu \in \SM X$ be a signed measure on $X$.  If $g \circ f$ is $\mu$-integrable, then $g$ is $\SM f(\mu)$-integrable and
$$\int g \circ f \;d\mu = \int g \;d\,\SM f(\mu)\;.$$
\end{PropSub}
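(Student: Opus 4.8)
The plan is to follow the classical route for a change-of-variables formula --- verify the identity on indicator functions, extend it by linearity to signed simple functions, and then pass to a limit --- while handling the extra care demanded by signed measures.

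First I would treat the case $g = \mathbf{1}_F$ for a measurable subset $F \subs Y$. Here $g \circ f = \mathbf{1}_{f^{-1}(F)}$, both integrals are plainly finite, and the asserted identity reads $\mu(f^{-1}(F)) = (\SM f(\mu))(F)$, which is precisely the defining property of the direct image $\SM f(\mu)$. Since the integral is $\RR$-linear in the integrand, this extends at once to every signed simple function $\theta$ on $Y$: one has $\int \theta \;d\,\SM f(\mu) = \int \theta \circ f \;d\mu$, with both sides finite. Applying the same indicator-then-simple reasoning to nonnegative functions and the nonnegative measure $|\mu|$, and then invoking the Monotone Convergence Theorem, yields the unsigned change-of-variables identity $\int h \;d\,\SM f(|\mu|) = \int h \circ f \;d|\mu|$ for every nonnegative measurable $h$ on $Y$.

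Next I would establish that $g$ is $\SM f(\mu)$-integrable. The key auxiliary fact is that $|\SM f(\mu)|(F) \lt (\SM f(|\mu|))(F)$ for every measurable $F \subs Y$: given a measurable partition $(F_k)$ of $F$ one has $|(\SM f(\mu))(F_k)| = |\mu(f^{-1}(F_k))| \lt |\mu|(f^{-1}(F_k))$, so that $\sum_k |(\SM f(\mu))(F_k)| \lt |\mu|(f^{-1}(F)) = (\SM f(|\mu|))(F)$, and taking the supremum over partitions gives the claim. Combining this with the unsigned identity above applied to $h = |g|$, and using the hypothesis that $g \circ f$ is $\mu$-integrable, we get
$$\int |g| \;d|\SM f(\mu)| \;\lt\; \int |g| \;d\,\SM f(|\mu|) \;=\; \int |g \circ f| \;d|\mu| \;<\; \infty\;,$$
so $g$ is indeed $\SM f(\mu)$-integrable.

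Finally I would pass to the limit. By Lemma \ref{thm:basic_conv_lemma} there is a sequence of signed simple functions $\theta_i$ on $Y$ with $|\theta_i| \lt |g|$ and $\theta_i \to g$ pointwise such that, $g$ being $\SM f(\mu)$-integrable, $\int g \;d\,\SM f(\mu) = \lim_i \int \theta_i \;d\,\SM f(\mu)$. Each $\theta_i \circ f$ is a signed simple function on $X$ with $\theta_i \circ f \to g \circ f$ pointwise and $|\theta_i \circ f| \lt |g \circ f|$, and $g \circ f$ is $|\mu|$-integrable; hence by the Dominated Convergence Theorem applied to $\mu^+$ and $\mu^-$ separately, $\lim_i \int \theta_i \circ f \;d\mu = \int g \circ f \;d\mu$. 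Chaining the simple-function identity $\int \theta_i \;d\,\SM f(\mu) = \int \theta_i \circ f \;d\mu$ through both limits gives the result. The step I expect to require the most care is the integrability claim: in the signed setting $\int |g|\,d|\SM f(\mu)|$ is not itself a direct-image integral, so the detour through the measure inequality $|\SM f(\mu)| \lt \SM f(|\mu|)$ is genuinely needed, and one must be careful to obtain that inequality on all measurable sets rather than merely on a generating family before integrating against it.
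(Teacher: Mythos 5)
Your proof is correct. Note, though, that the paper does not actually prove this proposition: it simply cites the change-of-variables theorem (3.6.1 of Bogachev) together with the remarks there on signed measures, so what you have written is a self-contained replacement for that citation rather than a variant of an argument in the paper. Your scheme --- indicators, then signed simple functions by linearity, then the unsigned identity $\int h \;d\,\SM f(|\mu|) = \int h \circ f \;d|\mu|$ via monotone convergence, then integrability, then a limit passage --- is the standard one, and you correctly identify the only genuinely delicate point in the signed setting, namely that $\int |g|\;d|\SM f(\mu)|$ is not itself a direct-image integral and must be controlled through the setwise inequality $|\SM f(\mu)| \lt \SM f(|\mu|)$. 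Your partition argument for that inequality is fine; a slightly shorter route is to observe that $\SM f$ is linear, so $\SM f(\mu) = \SM f(\mu^+) - \SM f(\mu^-)$ and hence $|\SM f(\mu)| \lt \SM f(\mu^+) + \SM f(\mu^-) = \SM f(|\mu|)$. Your final limit step is also sound: rather than trying to reuse Lemma \ref{thm:basic_conv_lemma} on the $X$ side (where its sequence need not be $\theta_i \circ f$), you dominate $|\theta_i \circ f|$ by the $|\mu|$-integrable function $|g \circ f|$ and apply dominated convergence to $\mu^+$ and $\mu^-$ separately, which is exactly what is needed. In short, the proof is complete and could stand in place of the paper's external reference.
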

\begin{proof}
See 3.6.1 of \cite{Bo} and the remarks which follow there regarding signed measures.
\end{proof}

\section{Measures supported by a subset}

\begin{PropSub}
Let $i:A \rightarrow X$ be an initial measurable map.  Then the linear map $\SM i:\SM A \rightarrow \SM X$ is injective.
\end{PropSub}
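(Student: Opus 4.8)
The plan is to reduce the claim to the explicit description of the sigma-algebra on $A$ given by part~3 of Proposition~\ref{thm:initial_final}. Since $i:A \rightarrow X$ is initial, that part tells us that $\sigma A = \{\, i^{-1}(E) \mid E \subs X \text{ measurable}\,\}$; in other words, \emph{every} measurable subset $B \subs A$ can be written as $B = i^{-1}(E)$ for some measurable $E \subs X$. This is the only feature of initiality we shall use — in particular we make no use of injectivity of the underlying function of $i$, which indeed need not hold.

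With this in hand, I would verify that $\SM i$ has trivial kernel, which suffices since we already know $\SM i$ to be linear. So suppose $\mu \in \SM A$ with $\SM i(\mu) = 0$. Let $B \subs A$ be measurable and pick a measurable $E \subs X$ with $B = i^{-1}(E)$. By the definition of the direct image in Definition~\ref{def:SX},
$$\mu(B) = \mu\bigl(i^{-1}(E)\bigr) = \bigl(\SM i(\mu)\bigr)(E) = 0\;.$$
Since $B$ ranges over all measurable subsets of $A$, this gives $\mu = 0$, whence $\SM i$ is injective.

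There is no real obstacle to overcome here; the single point needing attention is that initiality is invoked precisely in the form ``$\sigma A$ consists exactly of the pulled-back sets $i^{-1}(E)$'', so that knowing $\SM i(\mu)$ — equivalently, knowing $\mu$ on all such sets — pins down $\mu$ on all of $\sigma A$. (The representation $B = i^{-1}(E)$ is typically far from unique, but this is harmless: the displayed computation of $\mu(B)$ depends only on $B$ itself.)
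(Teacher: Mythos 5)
Your proof is correct and follows essentially the same route as the paper's: both reduce to showing the kernel is trivial and invoke Proposition \ref{thm:initial_final}(3) to write an arbitrary measurable $B \subs A$ as $i^{-1}(E)$, whence $\mu(B) = (\SM i(\mu))(E) = 0$. The extra remarks on non-uniqueness of the representation and non-injectivity of $i$ are accurate but not needed.
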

\begin{proof}
Suppose $\SM i(\nu) = 0$.  By $\ref{thm:initial_final} (3)$, for each measurable $F \subs A$ there is some measurable $E \subs X$ with $F = i^{-1}(E)$, and $0 = (\SM i(\nu))(E) = \nu(i^{-1}(E)) = \nu(F)$.  Hence $\nu = 0$.
\end{proof}

\begin{DefSub} \label{def:support}
Let $X$ be a measurable space and $A \subs X$ an arbitrary subset.  We say that a measure $\mu \in \SM X$ \textit{is supported by $A$} if $\mu$ lies in the image of the injective linear map $\SM \iota_{AX}:\SM A \rightarrowtail \SM X$, where $\iota_{AX}:A \hookrightarrow X$ is the inclusion and, as usual, $A$ is endowed with the initial sigma-algebra induced by $\iota_{AX}$.  Hence $\mu$ is supported by $A$ iff $\mu$ is the direct image along $A \hookrightarrow X$ of some measure $\nu \in \SM A$.  As such a measure $\nu$ is necessarily unique if it exists, it is denoted by $\mu_A$ and called \textit{the restriction of $\mu$ to $A$}.  We define $\SM(A,X) := \{\mu \in \SM X \mid \text{$\mu$ is supported by $A$}\}$.
\end{DefSub}

\begin{RemSub}
For a \textit{measurable} subset $E \subs X$ we find that $E$ supports a measure $\mu \in \SM X$ iff $\mu(X \backslash E) = 0$, and in this case $\mu_E(F) = \mu(F)$ for all measurable $F \subs E$.
\end{RemSub}

\begin{RemSub}
Since $\SM(A,X)$ is the image of the injective linear map $\SM \iota_{AX}:\SM A \rightarrowtail \SM X$, $\SM(A,X)$ is a vector subspace of $\SM X$ isomorphic to $\SM A$.
\end{RemSub}

\begin{PropSub} \label{thm:integ_of_restn}
Let $f:X \rightarrow \RR$ be a measurable map, let $\mu \in \SM X$ be supported by a subset $A \subs X$, and suppose that the restriction $A \overset{\iota_{AX}}{\hookrightarrow} X \xrightarrow{f} \RR$ is $\mu_A$-integrable.  Then $f$ is $\mu$-integrable, and
$$\int f \;d\mu = \int f \circ \iota_{AX} d\mu_A\;.$$
\end{PropSub}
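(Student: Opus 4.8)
The plan is to obtain this as an immediate consequence of Proposition~\ref{thm:integ_of_composite}, applied to the composite $A \xrightarrow{\iota_{AX}} X \xrightarrow{f} \RR$, combined with the description of ``supported by $A$'' in Definition~\ref{def:support}. First I would note that $A$ carries the initial sigma-algebra induced by $\iota_{AX}$, so $\iota_{AX}:A \to X$ is measurable and hence so is the composite $f \circ \iota_{AX}:A \to \RR$; this is precisely the map that is assumed to be $\mu_A$-integrable. I would also record that, since $\mu$ is supported by $A$, Definition~\ref{def:support} gives $\mu = \SM\iota_{AX}(\mu_A)$, i.e. $\mu$ is the direct image of its restriction $\mu_A \in \SM A$ along the inclusion.

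Next I would invoke Proposition~\ref{thm:integ_of_composite} with $A$ and $X$ in the roles of its $X$ and $Y$, with $\iota_{AX}$ and $f$ in the roles of its $f$ and $g$, and with $\mu_A$ in the role of its $\mu$. Its hypothesis — that the composite ($g \circ f$, here $f \circ \iota_{AX}$) be integrable against the measure — is exactly our standing assumption that $f \circ \iota_{AX}$ is $\mu_A$-integrable. The conclusion of that proposition then yields that $f$ is $\SM\iota_{AX}(\mu_A)$-integrable with $\int f \circ \iota_{AX}\,d\mu_A = \int f \;d\,\SM\iota_{AX}(\mu_A)$. Substituting the identity $\SM\iota_{AX}(\mu_A) = \mu$ from the previous step simultaneously gives the $\mu$-integrability of $f$ and the asserted equality $\int f \;d\mu = \int f \circ \iota_{AX}\,d\mu_A$.

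I do not expect any genuine obstacle here: the statement is, in effect, the special case of Proposition~\ref{thm:integ_of_composite} in which the first map is the inclusion of a subset equipped with its initial sigma-algebra. The only point that warrants a moment's care is the bookkeeping identification $\SM\iota_{AX}(\mu_A) = \mu$, which is simply the unravelling of the meaning of ``$\mu$ is supported by $A$'' and of the notation $\mu_A$ for the (unique) restriction; everything else is a direct citation.
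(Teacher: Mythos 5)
Your argument is correct and is exactly the paper's proof (which simply cites Proposition~\ref{thm:integ_of_composite}), with the substitution $\SM\iota_{AX}(\mu_A)=\mu$ spelled out explicitly. Nothing further is needed.
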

\begin{proof}
This follows from \ref{thm:integ_of_composite}.
\end{proof}

\begin{PropSub} \label{thm:total_var_of_restn}
Suppose that a signed measure $\mu \in \SM X$ is supported by a subset $A \subs X$.  Then $||\mu_A|| = ||\mu||$.
\end{PropSub}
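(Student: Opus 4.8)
The plan is to establish the two inequalities $||\mu|| \lt ||\mu_A||$ and $||\mu_A|| \lt ||\mu||$ separately. The first is immediate: by the definition of $\mu_A$ (Definition \ref{def:support}) we have $\mu = \SM\iota_{AX}(\mu_A)$, so applying Proposition \ref{thm:N_yields_contractions} to the measurable map $\iota_{AX}:A \hookrightarrow X$ gives $||\mu|| = ||\SM\iota_{AX}(\mu_A)|| \lt ||\mu_A||$.

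For the reverse inequality I would begin from a Hahn decomposition $(P,N)$ of $A$ — equipped, as always, with the initial sigma-algebra induced by $\iota_{AX}$ — with respect to the signed measure $\mu_A \in \SM A$, so that $A = P \cup N$ with $P \cap N = \emptyset$, both $P$ and $N$ measurable in $A$, and $||\mu_A|| = \mu_A^+(A) + \mu_A^-(A) = \mu_A(P) - \mu_A(N)$. The key point is then to push this decomposition forward into $X$: since $\iota_{AX}$ is an initial measurable map, Proposition \ref{thm:initial_final}(3) tells us that the sigma-algebra of $A$ consists precisely of the sets $\iota_{AX}^{-1}(E)$ with $E \subs X$ measurable, so I may choose a measurable $E \subs X$ with $\iota_{AX}^{-1}(E) = P$, whence automatically $\iota_{AX}^{-1}(X \backslash E) = A \backslash P = N$.

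It then remains to invoke the defining identity $(\SM\iota_{AX}(\mu_A))(E) = \mu_A(\iota_{AX}^{-1}(E))$ from Definition \ref{def:SX}, which yields $\mu(E) = \mu_A(P)$ and $\mu(X \backslash E) = \mu_A(N)$, and hence
$$||\mu_A|| = \mu_A(P) - \mu_A(N) = \mu(E) - \mu(X \backslash E) \lt |\mu|(E) + |\mu|(X \backslash E) = |\mu|(X) = ||\mu||\;,$$
using $\mu(E) \lt |\mu|(E)$ and $-\mu(X \backslash E) \lt |\mu|(X \backslash E)$. Combining the two inequalities gives $||\mu_A|| = ||\mu||$.

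I do not expect a genuine obstacle here; the only step needing care is the transfer of the Hahn decomposition of $\mu_A$ through the initial sigma-algebra on $A$, and this is exactly what Proposition \ref{thm:initial_final}(3) provides.
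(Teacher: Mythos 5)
Your proof is correct. The core move --- taking a Hahn decomposition $(P,N)$ of $(A,\mu_A)$ and lifting $P$ to a measurable $E \subs X$ with $A \cap E = P$ via the initiality of $\iota_{AX}$ (Proposition \ref{thm:initial_final}(3)) --- is exactly the paper's, but you package the conclusion differently. The paper goes on to verify that $(E, X \backslash E)$ is itself a Hahn decomposition for $(X,\mu)$ and deduces the stronger identities $\mu^{+} = \SM\iota_{AX}((\mu_A)^{+})$ and $\mu^{-} = \SM\iota_{AX}((\mu_A)^{-})$, from which the equality of total variations is immediate; this costs a small extra verification (that every measurable subset of $E$, resp.\ of $X \backslash E$, has nonnegative, resp.\ nonpositive, $\mu$-measure, which uses that $\mu$ is supported by $A$). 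You instead split the claim into two inequalities: one direction is the contraction property of Proposition \ref{thm:N_yields_contractions} applied to $\iota_{AX}$ (which the paper does not need here), and the other is the elementary estimate $\mu_A(P) - \mu_A(N) = \mu(E) - \mu(X \backslash E) \lt |\mu|(X)$, so you never have to check that the lifted pair is a Hahn decomposition for $\mu$. Both arguments are sound; the paper's yields the extra information about how the Jordan decomposition transports along $\iota_{AX}$, while yours is marginally more economical in its verifications.
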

\begin{proof}
Letting $(P',N')$ be a Hahn decomposition for $(A,\mu_A)$, we may take some measurable $P \subs X$ with $P' = A \cap P$.  Letting $N := X \backslash P$, one verifies readily that $(P,N)$ is a Hahn decomposition for $\mu$, and we have that $(P',N') = (A \cap P,A \cap N)$.  Using these Hahn decompositions, it is straightforward to verify that $\mu^+ = \SM \iota_{AX}((\mu_A)^+)$ and $\mu^- = \SM \iota_{AX}((\mu_A)^-)$, and the result follows.
\end{proof}

\begin{CorSub} \label{thm:isometry}
For each subset $A$ of a measurable space $X$, the injective linear map $\SM \iota_{AX}:\SM A \rightarrowtail \SM X$ restricts to an (isometric) isomorphism of normed vector spaces $\SM A \xrightarrow{\sim} \SM(A,X)$, whose inverse $\rho_{AX}:\SM(A,X) \rightarrow{} \SM A$ is given by $\mu \mapsto \mu_A$.
\end{CorSub}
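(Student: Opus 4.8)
The plan is to assemble this directly from the results already established in this section, so the proof will be short. First I would recall that, by the Proposition immediately preceding Definition~\ref{def:support}, the linear map $\SM\iota_{AX}:\SM A \to \SM X$ is injective, and that, by Definition~\ref{def:support}, $\SM(A,X)$ is by construction exactly the image of $\SM\iota_{AX}$. Hence $\SM\iota_{AX}$ corestricts to a linear bijection $\SM A \xrightarrow{\sim} \SM(A,X)$, and, as noted in the Remark preceding this corollary, $\SM(A,X)$ is a vector subspace of $\SM X$, which we regard as a normed vector space with the total variation norm inherited from $\SM X$.

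Next I would identify the inverse of this corestriction. By Definition~\ref{def:support}, for each $\mu \in \SM(A,X)$ the measure $\nu \in \SM A$ with $\SM\iota_{AX}(\nu) = \mu$ is unique and is denoted $\mu_A$; therefore the assignment $\rho_{AX}:\SM(A,X)\to\SM A$, $\mu\mapsto\mu_A$, is precisely the two-sided inverse of the corestricted map. In particular $\rho_{AX}$ is linear, being the inverse of a linear bijection, so no separate verification of linearity is needed.

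For the isometry statement I would invoke Proposition~\ref{thm:total_var_of_restn}: for every $\mu\in\SM(A,X)$ we have $||\rho_{AX}(\mu)|| = ||\mu_A|| = ||\mu||$, so $\rho_{AX}$ preserves the total variation norm, and consequently its inverse, the corestriction of $\SM\iota_{AX}$, does as well. Combining the three observations yields that $\SM\iota_{AX}$ restricts to an isometric isomorphism of normed vector spaces $\SM A \xrightarrow{\sim} \SM(A,X)$ with inverse $\rho_{AX}:\mu\mapsto\mu_A$, as claimed. I expect no real obstacle here: the substantive content was already isolated in Proposition~\ref{thm:total_var_of_restn} and the injectivity proposition, and the only points requiring care are the bookkeeping ones just mentioned — that $\SM(A,X)$ carries the subspace norm and that linearity of $\rho_{AX}$ is automatic.
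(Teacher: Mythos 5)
Your proof is correct and follows exactly the route the paper intends: the corollary is stated without proof precisely because it is the assembly of the injectivity proposition, the definition of $\SM(A,X)$ as the image of $\SM\iota_{AX}$ with its unique preimages $\mu_A$, and the isometry from Proposition~\ref{thm:total_var_of_restn}. Nothing to add.
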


\section{The endofunctor: Measures of bounded support}

\begin{DefSub}
Let $X \in \BornMeas$.  We say that a measure $\mu \in \SM X$ is \textit{of bounded support} if $\mu$ is supported by some bounded $B \subs X$.  We denote by $MX$ the set of all measures $\mu \in \SM X$ which are of bounded support.  For an arbitrary subset $A \subs X$, we denote by $M(A,X)$ the set of all $\mu \in MX$ which are supported by $A$.
\end{DefSub}

\begin{PropSub} \label{thm:morphisms_integrable}
For any morphism $f:X \rightarrow \RR$ in $\BornMeas$ and any $\mu \in MX$, $f$ is $\mu$-integrable.
\end{PropSub}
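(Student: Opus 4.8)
The plan is to reduce the claim to the elementary fact that a bounded measurable function is integrable against any finite signed measure, exploiting that $\mu$ is supported by a bounded set and that $f$, being a morphism of $\BornMeas$, is bornological. First I would invoke the definition of $MX$ to fix a bounded subset $B \subs X$ supporting $\mu$, so that $\mu = \SM \iota_{BX}(\mu_B)$ with $\mu_B \in \SM B$ the (unique) restriction of $\mu$ to $B$ from Definition~\ref{def:support}, a finite signed measure on $B$ equipped with the initial sigma-algebra induced by $\iota_{BX}$.

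Next, since $f:X \rightarrow \RR$ is bornological, the image $f(B)$ is a bounded subset of $\RR$; as the bounded subsets of $\RR$ are exactly the metrically bounded ones, there is some $r$ with $|f(b)| \lt r$ for all $b \in B$. Hence the composite $g := f \circ \iota_{BX}:B \rightarrow \RR$ is measurable and bounded in absolute value by the constant $r$, so that
\[
\int_B |g|\;d|\mu_B| \;\lt\; r\,|\mu_B|(B) \;=\; r\,||\mu_B|| \;<\; \infty\;,
\]
which shows that $g$ is $\mu_B$-integrable. Finally I would apply Proposition~\ref{thm:integ_of_restn} with $A := B$: since $\mu$ is supported by $B$ and the restriction $g = f \circ \iota_{BX}$ is $\mu_B$-integrable, it follows that $f$ is $\mu$-integrable (indeed with $\int f\;d\mu = \int g\;d\mu_B$), as required.

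There is no serious obstacle here. The only two points that need (minor) care are: that the bornology carried by $\RR$ is the metric one, so that a bornological map into $\RR$ is automatically bounded on bounded sets; and the standard observation that a bounded measurable function lies in $L^1$ of any finite measure. Everything else is bookkeeping, and the descent from $\mu_B$ back to $\mu$ is precisely the content of Proposition~\ref{thm:integ_of_restn}.
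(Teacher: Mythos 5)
Your argument is correct and is essentially identical to the paper's proof: fix a bounded set $B$ supporting $\mu$, observe that the bornological map $f$ restricted to $B$ is bounded and hence $\mu_B$-integrable, and conclude via Proposition~\ref{thm:integ_of_restn}. The only difference is that you spell out the elementary bound $\int_B |g|\;d|\mu_B| \lt r\,||\mu_B||$, which the paper leaves implicit.
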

\begin{proof}
$\mu$ is supported by some bounded $B \subs X$, and the restriction $B \hookrightarrow X \xrightarrow{f} \RR$ has bounded image and hence is $\mu_B$-integrable, so the conclusion follows from \ref{thm:integ_of_restn}.
\end{proof}

\begin{RemSub}
Since we implicitly endow $MX$ with the initial sigma-algebra induced by the inclusion $MX \hookrightarrow \SM X$, it follows, with reference to Definition \ref{def:SX}, that $MX$ carries the initial sigma-algebra induced by the evaluation maps $\Ev_E:MX \rightarrow \RR$ associated to measurable subsets $E \subs X$.
\end{RemSub}

\begin{RemSub} \label{rem:MX_direct_limit}
For $X \in \BornMeas$, since the bornology $\B X$ is a directed poset (under the inclusion order), $MX$ is a directed union of the monotone increasing family $(\SM(B,X))_{B \in \B X}$ of vector subspaces of $\SM X$.  Hence $MX$ is a vector subspace of $\SM X$ isomorphic to the direct limit of the evident composite functor 
$$\B X \rightarrow \Meas \xrightarrow{\SM} \RVect\;.$$
\end{RemSub}

\begin{DefSub}
Let $X \in \BornMeas$.  For each bounded $B \subs X$ and each real number $\gamma \gt 0$, we let
$$M(B,X,\gamma) := \{\mu \in MX \mid \text{$\mu$ is supported by $B$ and $||\mu|| \lt \gamma$}\}\;.$$
\end{DefSub}

\begin{PropSub} \label{thm:basis_born_MX}
For $X \in \BornMeas$, the sets $M(B,X,\gamma)$ with $B \subs X$ bounded and $\gamma \gt 0$ constitute a basis for a bornology on $MX$.
\end{PropSub}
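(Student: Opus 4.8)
The plan is to check directly that the family
\[
\C := \{M(B,X,\gamma) \mid B \subs X \text{ bounded},\ \gamma \gt 0\}
\]
satisfies the two requirements for a basis of a bornology on $MX$: that $\C$ is upward-directed under $\subs$, and that $\bigcup\C = MX$. Once these are established, the bornology generated by $\C$ automatically contains $\emptyset$ and all singletons of $MX$ and is closed under subsets and finite unions, so nothing more is needed. The covering condition is immediate: given $\mu \in MX$, by definition $\mu$ is supported by some bounded $B \subs X$, and setting $\gamma := \lVert\mu\rVert + 1 \gt 0$ yields $\mu \in M(B,X,\gamma)$.

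For upward-directedness I would first isolate a monotonicity property of the support relation: if $\mu \in \SM X$ is supported by $A$ and $A \subs A' \subs X$, then $\mu$ is supported by $A'$. To see this, observe that the inclusion $\iota_{AA'}:A \hookrightarrow A'$ is a morphism of $\Meas$; indeed, since $A'$ carries the initial sigma-algebra induced by $\iota_{A'X}$ (so that $\iota_{A'X}$ is an initial morphism, cf.\ \ref{thm:initial_final}(3)), measurability of $\iota_{AA'}$ reduces to that of $\iota_{A'X}\circ\iota_{AA'} = \iota_{AX}$, which holds by construction of the subspace structure on $A$. Functoriality of $\SM$ then gives $\SM\iota_{AX} = \SM\iota_{A'X}\circ\SM\iota_{AA'}$, so the image of $\SM\iota_{AX}$ is contained in that of $\SM\iota_{A'X}$, which says precisely that every measure supported by $A$ is supported by $A'$. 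With this in hand, given $M(B_1,X,\gamma_1),M(B_2,X,\gamma_2)\in\C$, the set $B := B_1\cup B_2$ is bounded (as $\B X$ is closed under finite unions) and $\gamma := \max(\gamma_1,\gamma_2) \gt 0$; any $\mu$ supported by $B_i$ with $\lVert\mu\rVert\lt\gamma_i$ is then supported by $B$ with $\lVert\mu\rVert\lt\gamma$, so $M(B_i,X,\gamma_i)\subs M(B,X,\gamma)\in\C$ for $i=1,2$, establishing directedness.

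The argument is short, and the only point demanding even mild care is the monotonicity of the support relation; since that rests solely on the functoriality of $\SM$ together with the observation that inclusions of subsets are measurable for the induced sigma-algebras, I do not anticipate any genuine obstacle.
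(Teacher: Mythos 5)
Your proof is correct and follows the same route as the paper, which simply asserts that upward-directedness is ``readily verified'' from the directedness of $\B X$ and exhibits $\mu \in M(B,X,\lVert\mu\rVert)$ for the covering condition. The one detail you spell out --- that a measure supported by $A$ is supported by any larger $A' \sups A$, via measurability of $\iota_{AA'}$ and functoriality of $\SM$ --- is exactly the content the paper leaves implicit, so there is nothing to add.
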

\begin{proof}
One verifies immediately, using the upward-directedness of $\B X$, that the given collection of sets is upward-directed.  Further, for any $\mu \in MX$, there is some bounded $B \subs X$ which supports $\mu$, so $\mu \in M(B,X,||\mu||)$.
\end{proof}

\begin{DefSub}
For $X \in \BornMeas$, we endow $MX$ with the \textit{supportwise bornology}, which is generated by the basis given in \ref{thm:basis_born_MX}.
\end{DefSub}

\begin{RemSub} \label{rem:norm_bornology}
For any bounded $B \in \BornMeas$, the supportwise bornology on $MB = \SM B$ coincides with the \textit{norm bornology}, which has a basis consisting of the closed balls about the origin.  For a general $X \in \BornMeas$, $MX$ is a direct limit $MX = \varinjlim_{B \in \B X}\SM B$ in $\RVect$ (\ref{rem:MX_direct_limit}), and by \ref{thm:isometry} one finds that $MX$ carries the direct limit bornology (\ref{rem:direct_limit_bornology}) induced by the norm bornologies on the spaces $\SM B$.  Each $\SM B$ is a \textit{bornological vector space} (see \ref{def:bvs_mvs_mbvs}), and hence by 2:8.2 of \cite{HN} we find that $MX$ is a direct limit, in the category of bornological vector spaces, of the normed vector spaces $\SM B$.
\end{RemSub}

\begin{PropSub}
Let $f:X \rightarrow Y$ be a $\BornMeas$-morphism.  Then the associated measurable linear map $\SM f:\SM X \rightarrow \SM Y$ restricts to a measurable linear map $Mf:MX \rightarrow MY$.  Moreover, $Mf$ is a bornological map, so we obtain a functor $M:\BornMeas \rightarrow \BornMeas$.
\end{PropSub}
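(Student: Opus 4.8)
The plan is to take $Mf$ to be the restriction of the measurable linear map $\SM f : \SM X \rightarrow \SM Y$ of Definition~\ref{def:SX} to the subsets $MX$ and $MY$, and then to verify, in turn: that $\SM f$ does carry $MX$ into $MY$; that the resulting map $Mf : MX \rightarrow MY$ is measurable and linear; and that $Mf$ is bornological. Functoriality of $M : \BornMeas \rightarrow \BornMeas$ is then immediate, since $Mf$ is a restriction of $\SM f$ and $\SM$ is already a functor, so that $M(\id_X)$ and $M(g \circ f)$ are the restrictions of $\id_{\SM X}$ and $\SM g \circ \SM f$ respectively.

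The heart of the matter is the first point, for which I would prove: if $\mu \in MX$ is supported by a bounded subset $B \subs X$, then $\SM f(\mu)$ is supported by the subset $f(B) \subs Y$, which is bounded because $f$ is bornological; hence $\SM f(\mu) \in MY$. Write $\mu = \SM\iota_{BX}(\mu_B)$ with $\mu_B \in \SM B$ the restriction of $\mu$ to $B$ (Definition~\ref{def:support}). The underlying function of $f \circ \iota_{BX} : B \rightarrow Y$ factors as $\iota_{f(B),Y} \circ g$, where $g : B \rightarrow f(B)$ is the corestriction. The one step that is not purely formal is to check that $g$ is \emph{measurable}: given a measurable $F \subs f(B)$, part~(3) of Proposition~\ref{thm:initial_final} provides a measurable $E \subs Y$ with $F = \iota_{f(B),Y}^{-1}(E)$, and then $g^{-1}(F) = \iota_{BX}^{-1}(f^{-1}(E))$ is measurable in $B$ since $f^{-1}(E)$ is measurable in $X$ and $\iota_{BX}$ is initial. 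Granting this, functoriality of $\SM$ yields $\SM f(\mu) = \SM\iota_{f(B),Y}\bigl(\SM g(\mu_B)\bigr)$, exhibiting $\SM f(\mu)$ as supported by $f(B)$. Thus $Mf : MX \rightarrow MY$ is well defined, and it is linear, being a restriction of the linear map $\SM f$ between the vector subspaces $MX \subs \SM X$ and $MY \subs \SM Y$ (Remark~\ref{rem:MX_direct_limit}).

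For measurability I would argue as in Definition~\ref{def:SX}: the inclusion $MY \hookrightarrow \SM Y$ induces on $MY$ the initial (evaluation) sigma-algebra, so it is an initial map in $\Meas$; since the composite $MX \hookrightarrow \SM X \xrightarrow{\SM f} \SM Y$ is measurable and equals $\bigl(MY \hookrightarrow \SM Y\bigr) \circ Mf$, it follows by initiality that $Mf$ is measurable. (Equivalently: each composite $\Ev_F \circ Mf$ with $F \subs Y$ measurable is the measurable map $\Ev_{f^{-1}(F)} : MX \rightarrow \RR$.)

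Finally, to see that $Mf$ is bornological it suffices to treat the basic bounded sets $M(B,X,\gamma)$ of Proposition~\ref{thm:basis_born_MX}, with $B \subs X$ bounded and $\gamma \gt 0$. If $\mu$ is supported by $B$ with $||\mu|| \lt \gamma$, then by the computation above $\SM f(\mu)$ is supported by the bounded set $f(B)$, while $||\SM f(\mu)|| \lt ||\mu|| \lt \gamma$ by Proposition~\ref{thm:N_yields_contractions}; hence $Mf\bigl(M(B,X,\gamma)\bigr) \subs M(f(B),Y,\gamma)$, which is a basic bounded subset of $MY$. Therefore $Mf$ is a $\BornMeas$-morphism, and combined with functoriality of $\SM$ we obtain the functor $M : \BornMeas \rightarrow \BornMeas$. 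The main obstacle is thus the measurability of the corestriction $g : B \rightarrow f(B)$, which is precisely the point where Proposition~\ref{thm:initial_final}(3) is needed.
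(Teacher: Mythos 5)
Your proof is correct and follows essentially the same route as the paper's: both hinge on restricting $f$ to a map $B \rightarrow f(B)$ and invoking the contraction property of Proposition~\ref{thm:N_yields_contractions}, the only difference being that the paper packages the bornological part via the direct-limit description of $MX$ from Remark~\ref{rem:norm_bornology}, while you check the containment $Mf(M(B,X,\gamma)) \subs M(f(B),Y,\gamma)$ on basic bounded sets directly. Your explicit verification that the corestriction $g:B \rightarrow f(B)$ is measurable is a detail the paper asserts without proof, and it is handled correctly.
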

\begin{proof}
For each bounded $B \subs X$, we have a measurable restriction $f_B:B \rightarrow f(B)$ of $f$, and by \ref{thm:N_yields_contractions}, the linear map
$$\SM f_{B}:\SM B \rightarrow \SM(f(B))$$
is bornological with respect to the norm bornologies.  In view of \ref{rem:norm_bornology}, these bornological linear maps induce a bornological linear map 
$$MX = \varinjlim_{B \in \B X}\SM B \longrightarrow \varinjlim_{C \in \B Y}\SM C = MY\;,$$
which is simply the desired restriction of $\SM f$.
\end{proof}

\section{The unit: Dirac measures}

\begin{DefSub} \label{def:unit}
Let $X$ be measurable space.  For each measurable $E \subs X$ we let $[E]:X \rightarrow \RR$ denote the characteristic function of $E$.  For each point $x \in X$, we denote by $\delta_x = \delta_{X,x} \in \SM X$ the \textit{Dirac measure} on $X$ associated to $x$, given by $\delta_x(E) = [E](x)$ for each measurable $E \subs X$.
\end{DefSub}

\begin{PropSub}
Let $X \in \BornMeas$ and let $x \in X$.  Then the Dirac measure $\delta_{X,x}$ is of bounded support.
\end{PropSub}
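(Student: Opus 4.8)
The plan is to use the singleton $\{x\}$ as the supporting set: every bornology on a set contains all singletons, so $\{x\} \in \B X$, and it therefore suffices to show that $\delta_{X,x}$ is supported by $\{x\}$ in the sense of Definition \ref{def:support}, for then $\delta_{X,x} \in M(\{x\},X) \subs MX$.

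First I would pin down the subspace structure on $\{x\}$. By \ref{thm:initial_final}(3), the initial sigma-algebra on $\{x\}$ induced by the inclusion $\iota := \iota_{\{x\},X}$ is $\{\iota^{-1}(E) \mid E \subs X \text{ measurable}\}$; since the only subsets of $\{x\}$ are $\emptyset = \iota^{-1}(\emptyset)$ and $\{x\} = \iota^{-1}(X)$, this sigma-algebra is all of $\Powset\{x\}$, and in particular the Dirac measure $\delta_{\{x\},x} \in \SM\{x\}$ is well-defined.

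Next I would verify that $\SM\iota(\delta_{\{x\},x}) = \delta_{X,x}$. For measurable $E \subs X$ one has $\iota^{-1}(E) = \{x\} \cap E$, which is $\{x\}$ when $x \in E$ and $\emptyset$ otherwise, so
$$\bigl(\SM\iota(\delta_{\{x\},x})\bigr)(E) = \delta_{\{x\},x}\bigl(\iota^{-1}(E)\bigr) = [E](x) = \delta_{X,x}(E)\;.$$
Hence $\delta_{X,x}$ lies in the image of $\SM\iota_{\{x\},X}:\SM\{x\} \rightarrowtail \SM X$, i.e. it is supported by the bounded set $\{x\}$, as required.

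I do not anticipate any real obstacle: the whole argument is the identification of the subspace sigma-algebra on $\{x\}$ via \ref{thm:initial_final}(3) together with a one-line unwinding of the direct-image formula from Definition \ref{def:SX}, both routine from the definitions.
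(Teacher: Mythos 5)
Your proposal is correct and follows exactly the paper's own argument: the singleton $\{x\}$ is bounded because every bornology contains singletons, and $\delta_{X,x}$ is the direct image along $\iota_{\{x\},X}$ of $\delta_{\{x\},x}$. You merely spell out the verification of the subspace sigma-algebra and the direct-image computation, which the paper leaves implicit.
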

\begin{proof}
$\delta_{X,x}$ is supported by $\{x\}$, which is bounded.  Indeed, $\delta_{X,x}$ is the direct image along the inclusion $\{x\} \hookrightarrow X$ of the Dirac measure $\delta_{\{x\},x}$ on $\{x\}$ associated to $x$.
\end{proof}

\begin{DefSub}
For each $X \in \BornMeas$, we let $\delta_X:X \rightarrow MX$ be the map sending each $x \in X$ to the Dirac measure $\delta_X(x) = \delta_{X,x}$.
\end{DefSub}

\begin{PropSub}
The maps $\delta_X:X \rightarrow MX$, where $X \in \BornMeas$, are measurable and bornological and constitute a natural transformation $\delta:1_{\BornMeas} \rightarrow M$.
\end{PropSub}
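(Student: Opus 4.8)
The plan is to verify in turn the three assertions contained in the statement — that each $\delta_X$ is measurable, that each $\delta_X$ is bornological, and that the family $(\delta_X)_{X \in \BornMeas}$ is natural — each of which I expect to collapse to a one-line computation with Dirac measures once the relevant structures on $MX$ have been unwound.

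For measurability, I would use that $MX$, as a subspace of $\SM X$, carries the initial sigma-algebra induced by the evaluation maps $\Ev_E : MX \to \RR$ attached to measurable $E \subs X$ (see Definition \ref{def:SX} and the remark following \ref{thm:morphisms_integrable}). By the universal property of this initial structure, $\delta_X$ is measurable as soon as each composite $X \xrightarrow{\delta_X} MX \xrightarrow{\Ev_E} \RR$ is; but this composite sends $x$ to $\delta_{X,x}(E) = [E](x)$, so it is precisely the characteristic function $[E]$ of the measurable set $E$, which is measurable. For boundedness-preservation, let $B \subs X$ be bounded; then every $x \in B$ yields a Dirac measure $\delta_{X,x}$ supported by $\{x\} \subs B$ and with $||\delta_{X,x}|| = |\delta_{X,x}|(X) = 1$, so that $\delta_X(B) \subs M(B,X,2)$, which is a basic bounded set in the supportwise bornology on $MX$ by \ref{thm:basis_born_MX}; hence $\delta_X$ is bornological.

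For naturality, given a $\BornMeas$-morphism $f : X \to Y$ I would check $Mf \circ \delta_X = \delta_Y \circ f$ directly on points: recalling that $Mf$ is the restriction of $\SM f$ and that $\SM f$ acts by direct image, for each $x \in X$ and each measurable $F \subs Y$ one has $(Mf(\delta_{X,x}))(F) = \delta_{X,x}(f^{-1}(F)) = [f^{-1}(F)](x) = [F](f(x)) = \delta_{Y,f(x)}(F)$, whence $Mf(\delta_{X,x}) = \delta_{Y,f(x)} = \delta_Y(f(x))$. I do not expect any genuine obstacle here: essentially all the content sits in the preceding setup of the sigma-algebra and bornology on $MX$, and once those definitions are in hand each of the three verifications is immediate.
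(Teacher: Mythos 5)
Your proposal is correct and follows essentially the same route as the paper: measurability via the initial sigma-algebra and the identity $\Ev_E \circ \delta_X = [E]$, boundedness via containment of $\delta_X(B)$ in a basic bounded set (the paper uses $M(B,X,1)$ where you use $M(B,X,2)$, an immaterial difference since $\lVert \delta_{X,x}\rVert = 1$), and naturality by the direct pointwise computation with direct-image measures that the paper leaves as "readily verified."
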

\begin{proof}
Each such map $\delta_X$ is measurable, since for each measurable $E \subs X$, the composite
$$X \xrightarrow{\delta_X} MX \xrightarrow{\Ev_E} \RR$$
is equal to the characteristic function $[E]:X \rightarrow \RR$, which is measurable.  $\delta_X$ is also bornological, since for any bounded $B \subs X$ we find that $\delta_X(B) \subs M(B,X,1)$.  The naturality of $\delta$ is readily verified.
\end{proof}

\begin{PropSub} \label{thm:integ_wrt_delta_is_evaln}
Let $f:X \rightarrow \RR$ be a measurable function and let $x \in X$.  Then $f$ is $\delta_x$-integrable and
$$\int f \;d\delta_x = f(x)\;.$$
\end{PropSub}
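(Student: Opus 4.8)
The plan is to reduce the statement to the trivial case of a one-point measurable space via the restriction machinery of Section 6. First I would observe that $\delta_x = \delta_{X,x}$ is supported by the singleton $A := \{x\}$; indeed, as noted in the proof that each Dirac measure is of bounded support, $\delta_{X,x}$ is the direct image along the inclusion $\iota_{\{x\},X}:\{x\} \hookrightarrow X$ of the Dirac measure $\delta_{\{x\},x}$ on the one-point measurable space $\{x\}$, so that the restriction $(\delta_x)_{\{x\}}$ is exactly $\delta_{\{x\},x}$.

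Next I would compute the integral over $\{x\}$ directly. The composite $f \circ \iota_{\{x\},X}:\{x\} \to \RR$ is a function on a one-point space, hence a signed simple function, taking the single value $f(x)$; explicitly it is $f(x)\cdot[\{x\}]$, where $[\{x\}]$ is the characteristic function of the measurable set $\{x\}$ in $\{x\}$. It is therefore $\delta_{\{x\},x}$-integrable, with $\int f \circ \iota_{\{x\},X}\;d\delta_{\{x\},x} = f(x)\cdot\delta_{\{x\},x}(\{x\}) = f(x)\cdot[\{x\}](x) = f(x)$.

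Finally I would invoke Proposition \ref{thm:integ_of_restn} with $A = \{x\}$: since $\delta_x$ is supported by $\{x\}$ and the restriction $f \circ \iota_{\{x\},X}$ is $(\delta_x)_{\{x\}}$-integrable, it follows that $f$ is $\delta_x$-integrable and that $\int f\;d\delta_x = \int f \circ \iota_{\{x\},X}\;d(\delta_x)_{\{x\}} = f(x)$, as desired.

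There is essentially no obstacle here; the only point requiring a moment's care is the identification $(\delta_x)_{\{x\}} = \delta_{\{x\},x}$, which is precisely the content of the earlier observation that $\delta_{X,x}$ arises as a direct image from the singleton $\{x\}$. An alternative, self-contained route would bypass Section 6 entirely: verify the formula first for signed simple functions $\theta = \sum_i a_i[E_i]$, where $\int \theta\;d\delta_x = \sum_i a_i\delta_x(E_i) = \sum_i a_i[E_i](x) = \theta(x)$; deduce from the monotone-approximation description of the integral of a nonnegative measurable function that $\int |f|\;d\delta_x = |f(x)| < \infty$, so that $f$ is $\delta_x$-integrable; and then apply the simple-function identity to $f^+$ and $f^-$ (or appeal to Lemma \ref{thm:basic_conv_lemma}) to conclude $\int f\;d\delta_x = f^+(x) - f^-(x) = f(x)$.
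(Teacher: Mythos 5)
Your proposal is correct, and your primary route is genuinely different from the paper's. The paper proves this by the standard four-step bootstrapping argument (characteristic functions, then simple functions, then nonnegative $f$ via the Monotone Convergence Theorem, then general $f$ via $f = f^+ - f^-$) --- which is essentially the ``alternative, self-contained route'' you sketch in your last paragraph. Your main argument instead reduces everything to the one-point space $\{x\}$ using the support machinery of Proposition \ref{thm:integ_of_restn}: since $\delta_{X,x}$ is by construction the direct image of $\delta_{\{x\},x}$ along $\{x\} \hookrightarrow X$, its restriction to $\{x\}$ is $\delta_{\{x\},x}$, and the restricted function is the simple function $f(x)\cdot[\{x\}]$ on a one-point space, whose integral is trivially $f(x)$. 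This is valid and non-circular, since \ref{thm:integ_of_restn} rests only on the change-of-variables result \ref{thm:integ_of_composite} (cited to Bogachev), neither of which uses the present proposition. What each approach buys: the paper's is elementary and self-contained, using nothing beyond the definition of the Lebesgue integral; yours is shorter and reuses infrastructure the paper has already built, at the cost of routing an elementary fact through the comparatively heavy change-of-variables theorem. The one point deserving the care you gave it --- the identification $(\delta_x)_{\{x\}} = \delta_{\{x\},x}$, which holds by uniqueness of restrictions --- is handled correctly.
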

\begin{proof}
This is a standard and easy exercise in applying the Monotone Convergence Theorem.  Establish the result in each of the following successive cases:  (i) when $f$ is a characteristic function, (ii) when $f$ is a simple function, (iii) when $f$ is nonnegative, and (iv) for general $f$.
\end{proof}

\section{The multiplication}
\begin{LemSub} \label{thm:EvE_bornmeas_morphism}
Let $X \in \BornMeas$.  Then 
\begin{enumerate}
\item For any bounded $B \subs X$, any $\gamma \gt 0$, and any measurable $E \subs X$, the image of $M(B,X,\gamma)$ under the evaluation map $\Ev_E:MX \rightarrow \RR$ is contained in $[-\gamma,\gamma]$.
\item For each measurable $E \subs X$, $\Ev_E:MX \rightarrow \RR$ is a $\BornMeas$-morphism.
\end{enumerate}
\end{LemSub}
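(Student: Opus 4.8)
The plan is to establish (1) by a short estimate on signed measures and then to obtain (2) as a formal consequence of (1) together with the way the sigma-algebra and bornology on $MX$ were defined.

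For (1), I would start from the Jordan decomposition $\mu = \mu^+ - \mu^-$ of a measure $\mu \in M(B,X,\gamma)$. Then $\Ev_E(\mu) = \mu(E) = \mu^+(E) - \mu^-(E)$, so that
$$|\Ev_E(\mu)| \lt \mu^+(E) + \mu^-(E) = |\mu|(E) \lt |\mu|(X) = ||\mu|| \lt \gamma,$$
where the middle inequality is monotonicity of the nonnegative measure $|\mu|$ applied to the measurable subset $E \subs X$, and the final step is the defining condition of $M(B,X,\gamma)$. Hence $\Ev_E(M(B,X,\gamma)) \subs [-\gamma,\gamma]$.

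For (2), measurability of $\Ev_E:MX \rightarrow \RR$ is immediate: by the remark following the definition of $MX$, the space $MX$ carries precisely the initial sigma-algebra induced by the family of evaluation maps $\Ev_F$ ($F \subs X$ measurable), so each $\Ev_E$ is measurable by construction. For the bornological property I would invoke \ref{thm:basis_born_MX}: the sets $M(B,X,\gamma)$ with $B$ bounded and $\gamma \gt 0$ form a basis for the supportwise bornology on $MX$, so every bounded subset of $MX$ is contained in some $M(B,X,\gamma)$; applying (1), its image under $\Ev_E$ is contained in $[-\gamma,\gamma]$, which is bounded in $\RR$. Thus $\Ev_E$ sends bounded sets to bounded sets, completing the proof.

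I do not expect a genuine obstacle here. The only step with any mathematical content is the elementary inequality $|\mu(E)| \lt |\mu|(E)$ for a signed measure; the remainder is bookkeeping with the initial structures carried by $MX$. If anything needs care, it is merely the observation that a bounded subset of $MX$ need not equal a basis element but only be contained in one, so that the estimate proved in (1) at the level of basis elements already suffices to conclude (2).
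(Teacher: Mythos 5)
Your proof is correct and follows the same route the paper intends: the Jordan-decomposition estimate $|\mu(E)| \lt |\mu|(X) = ||\mu|| \lt \gamma$ is exactly what the paper leaves as "readily verified" for (1), and (2) follows, as you say, from the initial sigma-algebra on $MX$ together with (1) applied to the basis sets $M(B,X,\gamma)$. Your explicit remark that a bounded subset need only be \emph{contained in} a basis element is a worthwhile clarification but not a departure from the paper's argument.
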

\begin{proof}
(1) is readily verified, and (2) follows since $\Ev_E$ is measurable by the definition of $\sigma(MX)$.  
\end{proof}

\begin{DefSub}
Let $X \in \BornMeas$ and $\M \in MMX$.  By \ref{thm:EvE_bornmeas_morphism} and \ref{thm:morphisms_integrable}, we have that $\Ev_E:MX \rightarrow \RR$ is $\M$-integrable for each measurable $E \subs X$.  Hence we may define a real-valued set function $\kappa_X(\M)$ on $\sigma X$ by 
$$(\kappa_X(\M))(E) = \int \Ev_E \;d\M\;.$$
\end{DefSub}

\begin{PropSub}
Let $X \in \BornMeas$ and $\M \in MMX$.  Then $\kappa_X(\M)$ is a finite signed measure on $X$.
\end{PropSub}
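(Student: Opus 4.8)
The plan is to verify the three defining properties of a finite signed measure: $\kappa_X(\M)(\emptyset) = 0$, finiteness of all its values, and countable additivity. The first is immediate, since $\Ev_\emptyset:MX \to \RR$ is the constant map $\mu \mapsto \mu(\emptyset) = 0$, so $\kappa_X(\M)(\emptyset) = \int 0\,d\M = 0$. Finiteness of $\kappa_X(\M)(E)$ for each measurable $E \subs X$ is built into the definition, as $\Ev_E$ is $\M$-integrable by \ref{thm:morphisms_integrable} and \ref{thm:EvE_bornmeas_morphism}, so $\int \Ev_E\,d\M$ is a genuine real number. All the work is in countable additivity.

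To prove countable additivity I would first reduce to a convenient support for $\M$. Since $\M \in MMX$ is of bounded support, it is supported by some bounded subset of $MX$, which by the definition of the supportwise bornology is contained in some basic bounded set $\K := M(B,X,\gamma)$ with $B \subs X$ bounded and $\gamma \gt 0$; as being supported by a given subset is preserved under enlarging that subset, $\M$ is supported by $\K$. Writing $\iota := \iota_{\K,MX}:\K \hookrightarrow MX$ for the inclusion and $\M_\K \in \SM \K$ for the restriction of $\M$ to $\K$, and noting that each $\Ev_F \circ \iota$ is bounded and hence $\M_\K$-integrable, Proposition \ref{thm:integ_of_restn} gives, for every measurable $F \subs X$,
$$\kappa_X(\M)(F) \;=\; \int \Ev_F\,d\M \;=\; \int (\Ev_F \circ \iota)\,d\M_\K\;.$$

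Now let $(E_n)_{n \in \NN}$ be a sequence of pairwise disjoint measurable subsets of $X$ and put $E := \bigcup_n E_n$. Since each $\mu \in MX$ is a countably additive signed measure, the partial sums satisfy $\sum_{n=1}^{N} \Ev_{E_n} = \Ev_{\bigcup_{n \le N} E_n} \to \Ev_E$ pointwise on $MX$ as $N \to \infty$, and hence $\sum_{n=1}^{N}(\Ev_{E_n} \circ \iota) \to \Ev_E \circ \iota$ pointwise on $\K$. By \ref{thm:EvE_bornmeas_morphism}(1), each function $\Ev_{\bigcup_{n \le N} E_n} \circ \iota$ takes values in $[-\gamma,\gamma]$ on $\K = M(B,X,\gamma)$, so this convergent sequence of measurable functions is uniformly bounded by the constant $\gamma$, which is $|\M_\K|$-integrable since $\M_\K$ is a finite signed measure and hence $|\M_\K|$ a finite measure on $\K$. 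Applying the Dominated Convergence Theorem (to the two finite measures $\M_\K^{+}$ and $\M_\K^{-}$ separately) together with linearity and finite additivity of the integral yields
$$\int (\Ev_E \circ \iota)\,d\M_\K \;=\; \lim_{N \to \infty} \int \Big(\,\sum_{n=1}^N \Ev_{E_n} \circ \iota\,\Big)\, d\M_\K \;=\; \lim_{N \to \infty} \sum_{n=1}^N \int (\Ev_{E_n} \circ \iota)\,d\M_\K\;,$$
which, via the displayed identity above, says precisely that $\kappa_X(\M)(E) = \sum_{n=1}^\infty \kappa_X(\M)(E_n)$. Hence $\kappa_X(\M)$ is countably additive, and the proof is complete.

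The main obstacle, and essentially the only point beyond bookkeeping, is this interchange of the infinite sum with the integral $\int\,d\M$. The naive pointwise estimate $|\Ev_{\bigcup_{n \le N} E_n}(\mu)| \lt \|\mu\|$ valid on all of $MX$ does not provide an $\M$-integrable dominating function, so one must first pass to a bounded support $M(B,X,\gamma)$ of $\M$; it is there that \ref{thm:EvE_bornmeas_morphism}(1) furnishes the uniform, and hence integrable, dominating constant $\gamma$ required for Dominated Convergence.
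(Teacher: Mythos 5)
Your proof is correct and follows essentially the same route as the paper: reduce to a basic bounded support $M(B,X,\gamma)$ of $\M$, use \ref{thm:integ_of_restn} to pass to the restriction, bound the partial-sum evaluations uniformly by $\gamma$ via \ref{thm:EvE_bornmeas_morphism}(1), and interchange sum and integral by bounded/dominated convergence. The only cosmetic difference is that you invoke the Dominated Convergence Theorem with a constant dominating function where the paper cites the Bounded Convergence Theorem, which is the same step.
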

\begin{proof}
Firstly, $(\kappa_X(\M))(\emptyset) = \int \Ev_\emptyset \;d\M = \int 0\;d\M = 0$.  Next, let $(E_i)_{i \in \NN}$ be a sequence of pairwise disjoint measurable subsets of $X$.  $\M$ is supported by some basic bounded subset $G = M(B,X,\gamma)$ of $MX$, where $B \subs X$ is bounded, and we let $\iota_G:G \hookrightarrow MX$ denote the inclusion.  Note that 
$$\left(\sum_{i = 1}^n \Ev_{E_i}\right)_{n \in \NN} \longrightarrow \;\;\Ev_{\cup_{i = 1}^{\infty}E_i}$$
pointwise on $MX$, by the countable additivity of the measures $\mu \in MX$.  Also, for each $n \in \NN$ we have that $\sum_{i = 1}^n \Ev_{E_i} = \Ev_{\cup_{i = 1}^nE_i}$ by finite additivity.  Moreover, for any measurable $E \subs X$ we have by Lemma \ref{thm:EvE_bornmeas_morphism} (1) that the restriction 
$$G \overset{\iota_G}{\hookrightarrow} MX \xrightarrow{\Ev_E} \RR$$
has $|\Ev_E \circ \;\iota_G| \lt \gamma$.  This applies in particular to the sets $\cup_{i = 1}^n E_i$ for each $n \in \NN$, so that
$$\left|\left(\sum_{i = 1}^n \Ev_{E_i}\right) \circ \iota_G\right| = \left|\Ev_{\cup_{i = 1}^nE_i} \circ \;\iota_G\right| \lt \gamma\;.$$
Hence we may employ the Bounded Convergence Theorem and Proposition \ref{thm:integ_of_restn} to compute as follows:
$$(\kappa_X(\M))\left(\bigcup_{i = 1}^\infty E_i\right) \;\;=\;\; \int \Ev_{\cup_{i = 1}^\infty E_i}\;d\M \;\;=\;\; \int \Ev_{\cup_{i = 1}^\infty E_i} \circ \;\iota_G\;d\M_G$$
$$=\;\; \lim_n \int \sum_{i = 1}^n \Ev_{E_i} \circ \;\iota_G\;d\M_G \;\;=\;\; \lim_n \sum_{i = 1}^n \int \Ev_{E_i} \circ \;\iota_G\;d\M_G$$
$$=\;\; \sum_{i = 1}^\infty \int \Ev_{E_i}\;d\M \;\;=\;\; \sum_{i = 1}^\infty (\kappa_X(\M))(E_i)\;.$$   
\end{proof}

\begin{LemSub} \label{thm:kappa_M_supported_by_B}
Let $X \in \BornMeas$, and suppose $\M \in MMX$ is supported by $M(B,X)$ for some bounded $B \subs X$.  Then $\kappa_X(\M)$ is supported by $B$.
\end{LemSub}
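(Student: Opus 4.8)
The plan is to produce an explicit finite signed measure $\rho$ on $B$ with $\SM\iota_{BX}(\rho) = \kappa_X(\M)$; by Definition \ref{def:support} this is exactly the assertion that $\kappa_X(\M)$ is supported by $B$. Since the sigma-algebra on $B$ is $\sigma B = \{B\cap E \mid E \subs X \;\text{measurable}\;\}$, one is forced to set
$$\rho(B\cap E) \;:=\; (\kappa_X(\M))(E) \;=\; \int \Ev_E\;d\M\qquad(E \subs X \;\text{measurable})\;,$$
and the whole task is to check that this is well-defined and countably additive; the identity $\SM\iota_{BX}(\rho) = \kappa_X(\M)$ is then immediate, since $(\SM\iota_{BX}(\rho))(E) = \rho(\iota_{BX}^{-1}(E)) = \rho(B\cap E) = (\kappa_X(\M))(E)$ for each measurable $E \subs X$.

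The main point — and the one place where the fact that $B$ need not be measurable must be handled with care — is the claim that $(\kappa_X(\M))(E) = 0$ for every measurable $E \subs X$ with $E \cap B = \emptyset$. To prove it, let $\iota:M(B,X) \hookrightarrow MX$ be the inclusion. Every $\mu \in M(B,X)$ is supported by $B$, so $(\Ev_E\circ\iota)(\mu) = \mu(E) = \mu_B(\iota_{BX}^{-1}(E)) = \mu_B(B\cap E) = \mu_B(\emptyset) = 0$; thus $\Ev_E\circ\iota$ is the zero function on $M(B,X)$, which is $\M_{M(B,X)}$-integrable. Since $\M$ is supported by $M(B,X)$, Proposition \ref{thm:integ_of_restn} (applied with $\Ev_E:MX \rightarrow \RR$ in place of $f$ and $M(B,X)$ in place of the supporting subset) yields
$$(\kappa_X(\M))(E) \;=\; \int \Ev_E\;d\M \;=\; \int \Ev_E\circ\iota\;d\M_{M(B,X)} \;=\; 0\;.$$

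Granting this, everything else is bookkeeping that reduces claims about $\rho$ to the already-established fact that $\kappa_X(\M)$ is a finite signed measure on $X$. For well-definedness: if $B\cap E_1 = B\cap E_2$ then $E_1\setminus E_2$ and $E_2\setminus E_1$ are measurable and disjoint from $B$, hence $\kappa_X(\M)$-null, so $(\kappa_X(\M))(E_1) = (\kappa_X(\M))(E_1\cap E_2) = (\kappa_X(\M))(E_2)$. For countable additivity: given pairwise disjoint $(F_i)_{i\in\NN}$ in $\sigma B$, I would write $F_i = B\cap E_i$ and replace each $E_i$ by $E_i \setminus \bigcup_{j<i}E_j$, which (as the $F_i$ are disjoint) leaves $B\cap E_i = F_i$ unchanged but makes the $E_i$ pairwise disjoint in $X$; then $\bigcup_i F_i = B\cap\bigcup_i E_i$, so $\rho(\bigcup_i F_i) = (\kappa_X(\M))(\bigcup_i E_i) = \sum_i (\kappa_X(\M))(E_i) = \sum_i \rho(F_i)$, while $\rho(\emptyset) = (\kappa_X(\M))(\emptyset) = 0$ and $\rho$ is finite because $\kappa_X(\M)$ is. Hence $\rho \in \SM B$ and $\SM\iota_{BX}(\rho) = \kappa_X(\M)$, so $\kappa_X(\M)$ is supported by $B$ with restriction $(\kappa_X(\M))_B = \rho$; in particular, as $B$ is bounded, $\kappa_X(\M) \in MX$. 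I expect the only genuine obstacle to be the displayed vanishing claim together with the well-definedness it yields; the remaining verifications are routine.
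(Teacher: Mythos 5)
Your proof is correct, but it takes a genuinely different route from the paper's. The paper's proof first checks that the isometry $\rho_{BX}:M(B,X)\rightarrow MB$ of \ref{thm:isometry} is a $\BornMeas$-morphism (via the commuting square relating $\Ev_E$ on $MX$ to $\Ev_{B\cap E}$ on $MB$), then transports $\M_{M(B,X)}$ along it and exhibits the restriction of $\kappa_X(\M)$ to $B$ as the explicit measure $\kappa_B\circ M\rho_{BX}(\M_{M(B,X)})$, computed intrinsically on $B$ by the already-defined map $\kappa_B$; the computation is a chain of applications of \ref{thm:integ_of_restn} and \ref{thm:integ_of_composite}. You instead build the restriction by hand, setting $\rho(B\cap E):=(\kappa_X(\M))(E)$ and reducing everything to the single substantive claim that $(\kappa_X(\M))(E)=0$ whenever $E$ is measurable and disjoint from $B$ --- which you prove correctly by observing that $\Ev_E$ vanishes identically on $M(B,X)$ and invoking \ref{thm:integ_of_restn}; the well-definedness and countable-additivity checks then correctly reduce to the additivity of $\kappa_X(\M)$ established in the preceding proposition. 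Your argument is more elementary: it does not need the measurability of $\rho_{BX}$, nor the (slightly delicate) point that $\M_{M(B,X)}$ is itself of bounded support so that $M\rho_{BX}$ and $\kappa_B$ apply, and it would work verbatim for an arbitrary subset $A\subs X$ in place of the bounded $B$ (boundedness entering only to conclude $\kappa_X(\M)\in MX$). What the paper's approach buys in exchange is an intrinsic description of $(\kappa_X(\M))_B$ as $\kappa_B$ of a transported measure, which meshes with the functorial machinery being developed; your $\rho$ is the same measure, described extensionally.
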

\begin{proof}
Consider the isomorphism of normed vector spaces $\rho_{BX}:M(B,X) \rightarrow MB$ of \ref{thm:isometry}, given by $\mu \mapsto \mu_B$.  By \ref{rem:norm_bornology}, $\rho_{BX}$ is an isomorphism of bornological sets.  Further, $\rho_{BX}$ is measurable, and hence a $\BornMeas$-morphism, since for each measurable $F \subs B$, which must be of the form $F = B \cap E$ for some measurable $E \subs X$, one finds that the diagram
\begin{equation} \label{eq:rho_diagram}
\xymatrix {
{M(B,X)} \ar[d]_j \ar[r]^{\rho_{BX}} & MB \ar[d]^{\Ev_{B \cap E}}\\
MX \ar[r]^{\Ev_E}                    & \RR 
}
\end{equation}
commutes, where $j$ is the inclusion, so that the composite $\Ev_F \circ \rho_{BX} = \Ev_E \circ j$ is measurable.

For each measurable $E \subs X$, we again employ the commutativity of the diagram \eqref{eq:rho_diagram} to compute as follows:
\begin{align*}
(\kappa_X(\M))(E) &= \int \Ev_E\;d\M                                                    &\\
                  &= \int \Ev_E\: \circ \;\:j\;\;d\M_{M(B,X)}                                   &\text{(by \ref{thm:integ_of_restn})}\\
                  &= \int \Ev_{B \cap E}\: \circ \;\rho_{BX}\;\:d\M_{M(B,X)}                  &\text{(by \eqref{eq:rho_diagram})}\\
                  &= \int \Ev_{B \cap E}\;d\,M\rho_{BX}(\M_{M(B,X)})                    &\text{(by \ref{thm:integ_of_composite})}\\
                  &= (\kappa_B \circ M\rho_{BX}(\M_{M(B,X)}))(B \cap E)                 &\\
                  &= (\SM \iota_{BX} \circ \kappa_B \circ M\rho_{BX}(\M_{M(B,X)}))(E)\;.&
\end{align*}
Hence $\kappa_X(\M)$ is the direct image along $\iota_{BX}:B \hookrightarrow X$ of the measure 
$$\kappa_B \circ M\rho_{BX}(\M_{M(B,X)})$$
on $B$.
\end{proof}

\begin{CorSub}
Let $X \in \BornMeas$ and $\M \in MMX$.  Then $\kappa_X(\M) \in MX$.
\end{CorSub}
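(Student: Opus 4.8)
The plan is to combine the preceding proposition, which already tells us that $\kappa_X(\M)$ is a finite signed measure on $X$, with Lemma \ref{thm:kappa_M_supported_by_B}. So the only thing left to verify is that $\kappa_X(\M)$ is of bounded support, i.e.\ that it is supported by some bounded subset of $X$.

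First I would unpack what it means for $\M \in MMX$ to be of bounded support: by definition $\M$ is supported by some bounded subset of $MX$, and since the supportwise bornology on $MX$ has the sets $M(B,X,\gamma)$ (with $B \subs X$ bounded and $\gamma \gt 0$) as a basis, $\M$ is in fact supported by some such $M(B,X,\gamma)$. Next I would observe the trivial inclusion $M(B,X,\gamma) \subs M(B,X)$, together with the elementary fact that being supported by a subset is preserved under passing to any larger subset: if $\mu$ is the direct image of $\nu \in \SM A$ along $A \hookrightarrow X$ and $A \subs A' \subs X$, then $\mu$ is also the direct image of $\SM\iota_{AA'}(\nu) \in \SM A'$ along $A' \hookrightarrow X$. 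Hence $\M$ is supported by $M(B,X)$.

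Now Lemma \ref{thm:kappa_M_supported_by_B} applies directly and yields that $\kappa_X(\M)$ is supported by $B$, which is bounded; therefore $\kappa_X(\M) \in MX$. There is no real obstacle here: the corollary is essentially a bookkeeping step assembling the structural work done in Lemma \ref{thm:kappa_M_supported_by_B} and the bornology basis of Proposition \ref{thm:basis_born_MX}, and the only point requiring a moment's care is the passage from a basic bounded set $M(B,X,\gamma)$ to the (typically larger, unbounded-norm) set $M(B,X)$ used in the hypothesis of the lemma.
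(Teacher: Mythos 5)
Your proof is correct and matches the paper's intent exactly: the paper states this corollary without proof, as it follows immediately from the preceding proposition (that $\kappa_X(\M)$ is a finite signed measure) together with Lemma \ref{thm:kappa_M_supported_by_B}. Your explicit handling of the passage from a basic bounded set $M(B,X,\gamma)$ to $M(B,X)$ is precisely the small bookkeeping step the paper leaves implicit, and it is done correctly.
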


\begin{PropSub} \label{thm:lifts_of_real_fns}
Let $f:X \rightarrow \RR$ be a morphism in $\BornMeas$.  Then 
\begin{enumerate}
\item there is a $\BornMeas$-morphism $f^\sharp:MX \rightarrow \RR$ given by $f^\sharp(\mu) = \int f\;d\mu$, and
\item for any $\M \in MMX$,
$$\int f^\sharp \;d\M = \int f\;d\,\kappa_X(\M)\;.$$
\end{enumerate}
\end{PropSub}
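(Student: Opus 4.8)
The plan is to prove the two parts in turn, with part (1) supplying the estimates used in part (2).

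For part (1): the value $f^\sharp(\mu) = \int f\,d\mu$ is defined for every $\mu \in MX$ by \ref{thm:morphisms_integrable}. To see that $f^\sharp$ is bornological, I would take a basic bounded subset $M(B,X,\gamma)$ of $MX$ and, for $\mu \in M(B,X,\gamma)$, use \ref{thm:integ_of_restn} to write $\int f\,d\mu = \int (f \circ \iota_{BX})\,d\mu_B$; since $f$ is bornological, $f(B)$ is bounded, so $C_B := \sup_{x \in B}|f(x)| < \infty$, and the elementary bound $\left|\int g\,d\nu\right| \lt ||g||_\infty\,||\nu||$ for bounded measurable $g$, combined with $||\mu_B|| = ||\mu|| \lt \gamma$ (\ref{thm:total_var_of_restn}), gives $|f^\sharp(\mu)| \lt C_B\gamma$, so that $f^\sharp(M(B,X,\gamma)) \subs [-C_B\gamma,\,C_B\gamma]$. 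For measurability, I would invoke \ref{thm:basic_conv_lemma} to fix a sequence $(\theta_i)$ of signed simple functions with $|\theta_i| \lt |f|$, $\theta_i \to f$ pointwise, and $\int f\,d\mu = \lim_i \int \theta_i\,d\mu$ for every $\mu$ for which $f$ is $\mu$-integrable; writing $\theta_i = \sum_k a_{ik}[E_{ik}]$, the function $\mu \mapsto \int \theta_i\,d\mu = \sum_k a_{ik}\Ev_{E_{ik}}(\mu)$ is a finite linear combination of the evaluation maps (measurable by the definition of $\sigma(MX)$) and so is measurable, and by \ref{thm:morphisms_integrable} these functions converge pointwise on $MX$ to $f^\sharp$, which is therefore measurable.

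For part (2): I would first verify the identity for a signed simple function $\theta = \sum_k a_k[E_k]$ on $X$, where $\theta^\sharp = \sum_k a_k\Ev_{E_k}$, so that $\int \theta^\sharp\,d\M = \sum_k a_k \int \Ev_{E_k}\,d\M = \sum_k a_k\,(\kappa_X(\M))(E_k) = \int \theta\,d\,\kappa_X(\M)$, the middle equality being the definition of $\kappa_X(\M)$. I then pass to the limit along the sequence $(\theta_i)$ of \ref{thm:basic_conv_lemma}. On the one hand, $\M$ is supported by some basic bounded subset $G = M(B,X,\gamma)$ of $MX$ (with $B \subs X$ bounded), and the estimate from part (1), applied with $\theta_i$ in place of $f$ and using the domination $|\theta_i| \lt |f|$, shows that the restrictions $\theta_i^\sharp \circ \iota_G : G \to \RR$ satisfy $\left|\theta_i^\sharp \circ \iota_G\right| \lt C_B\gamma$ uniformly in $i$ while converging pointwise to $f^\sharp \circ \iota_G$; hence the Bounded Convergence Theorem applied to the finite signed measure $\M_G$, together with \ref{thm:integ_of_restn}, gives
$$\int \theta_i^\sharp\,d\M = \int (\theta_i^\sharp \circ \iota_G)\,d\M_G \;\longrightarrow\; \int (f^\sharp \circ \iota_G)\,d\M_G = \int f^\sharp\,d\M\;.$$
On the other hand, since $\kappa_X(\M) \in MX$, Lemma \ref{thm:basic_conv_lemma} applied directly with $\mu = \kappa_X(\M)$ gives $\int \theta_i\,d\,\kappa_X(\M) \to \int f\,d\,\kappa_X(\M)$. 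Combining these two limits with the simple-function identity $\int \theta_i^\sharp\,d\M = \int \theta_i\,d\,\kappa_X(\M)$ yields the asserted equality.

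The step I expect to be the main obstacle is securing the uniform bound $\left|\theta_i^\sharp \circ \iota_G\right| \lt C_B\gamma$ required for the Bounded Convergence Theorem on the support $G$ of $\M$: this is precisely where the domination $|\theta_i| \lt |f|$ from \ref{thm:basic_conv_lemma}, the reduction to the restriction $\mu_B$ via \ref{thm:integ_of_restn}, and the isometry $||\mu_B|| = ||\mu||$ of \ref{thm:total_var_of_restn} must be combined — the same circle of facts underlying the earlier proof that $\kappa_X(\M)$ is a finite signed measure. The remaining ingredients (well-definedness, the linear-combination-of-evaluations argument for measurability, and the simple-function identity) are routine.
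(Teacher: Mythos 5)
Your proposal is correct and follows essentially the same route as the paper: reduce to signed simple functions via Lemma \ref{thm:basic_conv_lemma}, use the evaluation maps $\Ev_E$ and the definition of $\kappa_X$ for the base case, and obtain the uniform bound $|\theta_i^\sharp| \lt C_B\gamma$ on $M(B,X,\gamma)$ from \ref{thm:integ_of_restn} and \ref{thm:total_var_of_restn} so that the Bounded Convergence Theorem applies on the support of $\M$. The only (harmless) divergence is that for the limit $\int \theta_i\,d\,\kappa_X(\M) \to \int f\,d\,\kappa_X(\M)$ you invoke \ref{thm:basic_conv_lemma} directly with $\mu = \kappa_X(\M) \in MX$, whereas the paper applies the Bounded Convergence Theorem a second time after restricting to $B$ via Lemma \ref{thm:kappa_M_supported_by_B}.
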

\begin{proof}
\textbf{(i).}  First consider the case where $f = [E]$ is the characteristic function of some measurable $[E] \subs X$.  We have that $f^\sharp(\mu) = \int f \;d\mu = \int [E]\;d\mu = \mu(E)$ for each $\mu \in MX$, so $f^\sharp = \Ev_E : MX \rightarrow \RR$ is a $\BornMeas$-morphism, using Lemma \ref{thm:EvE_bornmeas_morphism}.  Further,
$$\int f^\sharp \,d\M = \int \Ev_E\,d\M = (\kappa_X(\M))(E) = \int [E]\;d\,\kappa_X(\M) = \int f\;d\,\kappa_X(\M)\;.$$

\paragraph*{(ii).}  It follows from the linearity of the integral that the map $(-)^\sharp:\BornMeas(X,\RR) \rightarrow \Set(MX,\RR)$ is linear.  Hence for any signed simple function $f = \sum_{i = 1}^n a_i[E_i]$ on $X$,  $f^\sharp$ is a linear combination of the $\BornMeas$-morphisms $[E_i]^\sharp : MX \rightarrow \RR$ and hence is a $\BornMeas$ morphism, by \ref{thm:BornMeas_functions_vector_space}.  The needed equation (2) for $f$ follows from $(i)$ and the linearity of the integral.

\paragraph*{(iii).}  For general $f$, we have by \ref{thm:basic_conv_lemma} that there is a sequence $(\theta_i)$ of signed simple functions on $X$ such that $|\theta_i| \lt |f|$, $\theta_i \rightarrow f$ pointwise, and
$$\forall \mu \in MX \;:\; f^\sharp(\mu) = \int f\;d\mu = \lim_i \int \theta_i \;d\mu = \lim_i \theta_i^\sharp(\mu).$$
Hence $f^\sharp = \lim_i \theta_i^\sharp$ pointwise on $MX$, so since each $\theta_i^\sharp$ is measurable by (ii), $f^\sharp$ is measurable.  

The following general observation will enable the remainder of our proof:  \\
\\
\noindent \textbf{Claim.}
For any basic bounded subset $M(B,X,\gamma)$ of $MX$, any $\beta \gt 0$, and any bornological measurable function $g:X \rightarrow \RR$ with $|g| \lt \beta$ on $B$,
$$|g^\sharp| \lt \beta\gamma\;\;\;\; \text{on $M(B,X,\gamma)$}\;.$$
\\
Indeed, for each $\mu \in M(B,X,\gamma)$ we find, using \ref{thm:integ_of_restn} and \ref{thm:total_var_of_restn}, that
$$|g^\sharp(\mu)| = \left|\int g \;d\mu \right| = \left|\int g \circ \iota_{BX}\;d\mu_B \right| \lt \int |g \circ \iota_{BX}|\;d|\mu_B|$$
$$\lt \int \beta\;d|\mu_B| = \beta |\mu_B|(B) = \beta||\mu_B|| = \beta||\mu|| \lt \beta\gamma\;.$$

The first consequence of this Claim is that $f^\sharp$ is bornological, since for any basic bounded subset $M(B,X,\gamma)$ of $MX$ we can take $\beta \gt 0$ with $|f| \lt \beta$ on $B$, as $f$ is bornological, so the Claim applies.

Secondly, the Claim allows a proof of the equation in (2), as follows.  Each $\M \in MMX$ is supported by some basic bounded subset $G = M(B,X,\gamma)$ of $MX$, and, taking any bound $\beta$ for $|f|$ on $B$ we have that $|\theta_i| \lt |f| \lt \beta$ on $B$ for every $i \in \NN$.  Hence, by the Claim,
$$|\theta_i \circ \iota_B| \lt \beta\;\;\text{and}\;\; |\theta_i^\sharp \circ \iota_G| \lt \beta\gamma\;,\;\;\;\;\text{ for each $i \in \NN$,}$$
where $\iota_B:B \hookrightarrow X$ and $\iota_G:G \hookrightarrow MX$ are the inclusions.  We also have that
$$f \circ \iota_B = \lim_i\;\theta_i \circ \iota_B \;\;\;\;\text{and}\;\;\;\; f^\sharp \circ \iota_G = \lim_i\;\theta_i^\sharp \circ \iota_G$$
pointwise.  Hence, we may apply the Bounded Convergence Theorem twice in order to compute that
\begin{align*}
\int f^\sharp \;d\M &= \int f^\sharp \circ \iota_G \;d\M_G                      & \text{(by \ref{thm:integ_of_restn})} \\
\                   &= \lim_i \int \theta_i^\sharp \circ \iota_G \;d\M_G        & \text{(by the B.C.T.)}               \\
                    &= \lim_i \int \theta_i^\sharp \;d\M                        & \text{(by \ref{thm:integ_of_restn})} \\
                    &= \lim_i \int \theta_i \;d\,\kappa_X(\M)                   & \text{(by (ii))}                     \\
                    &= \lim_i \int \theta_i \circ \iota_B \;\;d\,\kappa_X(\M)_B & \text{(by \ref{thm:integ_of_restn})} \\
                    &= \int f \circ \iota_B \;\;d\,\kappa_X(\M)_B               & \text{(by the B.C.T)}                \\
                    &= \int f \;d\,\kappa_X(\M)\;,                              & \text{(by \ref{thm:integ_of_restn})}  \\
\end{align*}
 since $\M$ is supported by $G$ and, by \ref{thm:kappa_M_supported_by_B}, $\kappa_X(\M)$ is supported by $B$.
\end{proof}

\begin{PropSub}
Let $X \in \BornMeas$.  Then the map $\kappa_X:MMX \rightarrow MX$ is a $\BornMeas$-morphism.
\end{PropSub}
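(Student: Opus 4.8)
The plan is to check separately that $\kappa_X$ is measurable and that it is bornological. For measurability I would use that $MX$ carries the initial sigma-algebra induced by the evaluation maps $\Ev_E\colon MX\to\RR$ with $E\subs X$ measurable, so that it suffices to show each composite $\Ev_E\circ\kappa_X\colon MMX\to\RR$ is measurable. By the definition of $\kappa_X$ one has $(\Ev_E\circ\kappa_X)(\M)=(\kappa_X(\M))(E)=\int\Ev_E\,d\M$ for every $\M\in MMX$, and since $\Ev_E\colon MX\to\RR$ is a $\BornMeas$-morphism by \ref{thm:EvE_bornmeas_morphism}, this identity says exactly that $\Ev_E\circ\kappa_X=(\Ev_E)^\sharp$ in the notation of \ref{thm:lifts_of_real_fns}. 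Hence $\Ev_E\circ\kappa_X$ is a $\BornMeas$-morphism, in particular measurable, by \ref{thm:lifts_of_real_fns}(1), and measurability of $\kappa_X$ follows.

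For the bornological property I would first reduce to basic bounded sets. The sets $M(B,X,\gamma)$ with $B\subs X$ bounded and $\gamma\gt 0$ form a basis for the bornology on $MX$, and the supportwise bornology on $MMX$ is generated by the sets $M(\G,MX,\Gamma)$ with $\G\subs MX$ bounded and $\Gamma\gt 0$; since being supported by $\G$ implies being supported by any larger set, every bounded subset of $MMX$ lies inside some $M(M(B,X,\gamma),MX,\Gamma)$. So it is enough to show $\kappa_X$ sends each such set into a basic bounded subset of $MX$. Given $\M\in M(M(B,X,\gamma),MX,\Gamma)$, Lemma \ref{thm:kappa_M_supported_by_B}, applied with the bounded set $M(B,X)\sups M(B,X,\gamma)$, gives at once that $\kappa_X(\M)$ is supported by $B$; so the only thing left is a bound on $||\kappa_X(\M)||$ that is uniform in $\M$.

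I expect this uniform total-variation estimate to be the main point. The idea is to take a Hahn decomposition $(P,N)$ of $\kappa_X(\M)$, so that $||\kappa_X(\M)||=(\kappa_X(\M))(P)-(\kappa_X(\M))(N)=\int(\Ev_P-\Ev_N)\,d\M$. For any $\mu\in M(B,X,\gamma)$ one has $|\mu(P)-\mu(N)|\lt|\mu|(P)+|\mu|(N)=|\mu|(X)=||\mu||\lt\gamma$, so the $\BornMeas$-morphism $\Ev_P-\Ev_N\colon MX\to\RR$ (a difference of $\BornMeas$-morphisms, by \ref{thm:BornMeas_functions_vector_space}) has absolute value at most $\gamma$ on $M(B,X,\gamma)$; the Claim in the proof of \ref{thm:lifts_of_real_fns} (which passes through \ref{thm:integ_of_restn} and \ref{thm:total_var_of_restn}) then yields $\bigl|\int(\Ev_P-\Ev_N)\,d\M\bigr|\lt\gamma\Gamma$. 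Thus $||\kappa_X(\M)||\lt\gamma\Gamma$, whence $\kappa_X\bigl(M(M(B,X,\gamma),MX,\Gamma)\bigr)\subs M(B,X,\gamma\Gamma)$, completing the proof. As a variant one could avoid the Hahn decomposition by bounding $|(\kappa_X(\M))(E)|\lt\gamma\Gamma$ for every measurable $E\subs X$ directly from \ref{thm:EvE_bornmeas_morphism}(1) and the Claim, and then using $||\nu||\lt 2\sup_E|\nu(E)|$ to land in $M(B,X,2\gamma\Gamma)$; the worse constant is immaterial.
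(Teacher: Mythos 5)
Your proposal is correct and follows essentially the same route as the paper: measurability via $\Ev_E\circ\kappa_X=\Ev_E^\sharp$, and boundedness via Lemma \ref{thm:kappa_M_supported_by_B} together with a Hahn decomposition of $\kappa_X(\M)$ to estimate $\|\kappa_X(\M)\|=\int(\Ev_P-\Ev_N)\,d\M\lt\gamma\Gamma$. The only cosmetic difference is that you delegate the final integral estimate to the Claim in the proof of \ref{thm:lifts_of_real_fns}, whereas the paper rederives it directly from \ref{thm:integ_of_restn} and \ref{thm:total_var_of_restn}; both give the same containment $\kappa_X(M(M(B,X,\gamma),MX,\Gamma))\subs M(B,X,\gamma\Gamma)$.
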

\begin{proof}
Firstly, $\kappa_X$ is measurable, since for each measurable $E \subs X$, one checks that the composite $MMX \xrightarrow{\kappa_X} MX \xrightarrow{\Ev_E} \RR$ is none other than $\Ev_E^\sharp$, which is measurable by \ref{thm:lifts_of_real_fns}.

Secondly, $\kappa_X$ is bornological, as follows.  The bornology on $MMX$ has a basis consisting of the sets $M(G,MX,\delta)$, where $G = M(B,X,\gamma)$ is a basic bounded subset of $MX$.  For any such, we shall show that
$$\kappa_X(M(G,MX,\delta)) \subs M(B,X,\gamma\delta)\;,$$
yielding the needed result.  To this end, let $\M \in M(G,MX,\delta)$.  Then $\M$ is supported by $G = M(B,X,\gamma)$, so by Lemma \ref{thm:kappa_M_supported_by_B}, $\kappa_X(\M) \in M(B,X)$ and hence it suffices to show that $||\kappa_X(\M)|| \lt \gamma\delta$.  Let $(P,N)$ be a Hahn decomposition for $(X,\kappa_X(\M))$.  Notice that $t := \Ev_P - \Ev_N : MX \rightarrow \RR$ is the function sending each $\mu \in MX$ to its total variation
$$t(\mu) = \mu(P) - \mu(N) = \mu^+(X) + \mu^-(X) = ||\mu||\;.$$
Moreover,
$$||\kappa_X(\M)|| = (\kappa_X(\M))(P) - (\kappa_X(\M))(N)$$
$$= \int \Ev_P \;d\M - \int \Ev_N \;d\M = \int \Ev_P - \Ev_N \;d\M\;,$$
so
$$||\kappa_X(\M)|| = t(\kappa_X(\M)) = \int t \;d\M\;.$$
For each $\mu \in G = M(B,X,\gamma)$ we have $t(\mu) = ||\mu|| \lt \gamma$, so
$$|t \circ \iota_G| \lt \gamma\;,$$
where $\iota_G:G \hookrightarrow MX$ is the inclusion.  Hence, since $\M$ is supported by $G$,
$$||\kappa_X(\M)|| = \int t \;d\M = \int t \circ \iota_G \;d\M_G \lt \int |t \circ \iota_G|\;d|\M_G|$$
$$\lt \gamma|\M_G|(G) = \gamma ||\M_G|| = \gamma ||\M|| \lt \gamma\delta\;,$$
using Proposition \ref{thm:total_var_of_restn} and the assumption that $\M \in M(G,MX,\delta)$.
\end{proof}

\begin{PropSub}
The $\BornMeas$-morphisms $\kappa_X:MMX \rightarrow MX$ constitute a natural transformation $\kappa:MM \rightarrow M$.
\end{PropSub}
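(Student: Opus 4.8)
The plan is to verify the single naturality square directly, by testing equality of the relevant measures on each measurable set. Fix a $\BornMeas$-morphism $f : X \to Y$; what must be shown is that $Mf \circ \kappa_X = \kappa_Y \circ MMf$ as maps $MMX \to MY$. Since $MY$ carries the initial sigma-algebra induced by the evaluation maps $\Ev_F : MY \to \RR$ with $F \subs Y$ measurable — equivalently, since two elements of $MY \subs \SM Y$ coincide as soon as they agree on every measurable subset of $Y$ — it suffices to check, for each $\M \in MMX$ and each measurable $F \subs Y$, that $\bigl(Mf(\kappa_X(\M))\bigr)(F) = \bigl(\kappa_Y(MMf(\M))\bigr)(F)$.

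First I would record the elementary identity $\Ev_F \circ Mf = \Ev_{f^{-1}(F)}$ of functions $MX \to \RR$, which is immediate from the definition of the direct-image measure: for $\mu \in MX$ one has $(Mf(\mu))(F) = \mu(f^{-1}(F))$. In particular $\Ev_F \circ Mf$ is a $\BornMeas$-morphism — being equal to $\Ev_{f^{-1}(F)}$, which is one by \ref{thm:EvE_bornmeas_morphism} since $f^{-1}(F)$ is measurable — and hence is $\M$-integrable by \ref{thm:morphisms_integrable}. The left-hand side then unwinds, by the definitions of $Mf$ and $\kappa_X$, to
$$\bigl(Mf(\kappa_X(\M))\bigr)(F) = (\kappa_X(\M))(f^{-1}(F)) = \int \Ev_{f^{-1}(F)} \;d\M\;.$$

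For the right-hand side, observe that $MMf(\M) = M(Mf)(\M)$ is the direct image of $\M$ along the $\BornMeas$-morphism $Mf : MX \to MY$. Applying the change-of-variables formula \ref{thm:integ_of_composite} to the composite $MX \xrightarrow{Mf} MY \xrightarrow{\Ev_F} \RR$ — whose integrability hypothesis is exactly the $\M$-integrability of $\Ev_F \circ Mf$ established above — and then invoking the identity $\Ev_F \circ Mf = \Ev_{f^{-1}(F)}$ once more, one obtains
$$\bigl(\kappa_Y(MMf(\M))\bigr)(F) = \int \Ev_F \;d\,\bigl(M(Mf)(\M)\bigr) = \int \Ev_F \circ Mf \;d\M = \int \Ev_{f^{-1}(F)} \;d\M\;.$$
Comparing the two displays gives the required equality, and hence the naturality square. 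I expect no real obstacle here: the content is entirely the change-of-variables identity \ref{thm:integ_of_composite}, with \ref{thm:morphisms_integrable} supplying its integrability hypothesis, and everything else is bookkeeping with direct-image measures and evaluation maps.
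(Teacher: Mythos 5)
Your proof is correct and follows essentially the same route as the paper: both reduce naturality to checking each measurable $F \subs Y$, use the identity $\Ev_F \circ Mf = \Ev_{f^{-1}(F)}$, and apply the change-of-variables formula \ref{thm:integ_of_composite}. Your extra care in verifying the integrability hypothesis via \ref{thm:EvE_bornmeas_morphism} and \ref{thm:morphisms_integrable} is a welcome refinement but not a substantive difference.
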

\begin{proof}
Let $f:X \rightarrow Y$ in $\BornMeas$.  For each $\M \in MMX$ and each measurable $F \subs Y$, since the composite $MX \xrightarrow{Mf} MY \xrightarrow{\Ev_F} \RR$
is equal to the evaluation map $\Ev_{f^{-1}(F)}$, we compute, using Proposition \ref{thm:integ_of_composite}, that
$$
\begin{array}{lllllll}
(\kappa_Y \circ MMf(\M))(F) & = & \int \Ev_F\;d\,MMf(\M)      & = & \int \Ev_F \circ Mf\;d\M     \\
                            & = & \int \Ev_{f^{-1}(F)}\;d\M   & = & (\kappa_X(\M))(f^{-1}(F)) \\
                            & = & (Mf \circ \kappa_X (\M))(F) &   & 
\end{array}
$$
\end{proof}

\section{The monad of finite signed measures of bounded support}

\begin{ThmSub}
$\MM := (M,\delta,\kappa)$ is a monad on $\BornMeas$.
\end{ThmSub}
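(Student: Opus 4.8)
The plan is to observe that the functoriality of $M$ and the naturality of $\delta$ and $\kappa$ have all been established in the preceding sections, so that only the three monad identities
$$\kappa_X \circ \delta_{MX} = \id_{MX}, \qquad \kappa_X \circ M\delta_X = \id_{MX}, \qquad \kappa_X \circ M\kappa_X = \kappa_X \circ \kappa_{MX}$$
remain to be checked, for each $X \in \BornMeas$. A measure in $MX$ is determined by its values on the measurable subsets $E \subs X$, and these values are precisely what the evaluation morphisms $\Ev_E : MX \rightarrow \RR$ record; so for each of the three identities it suffices to show that the two sides agree after composition with every $\Ev_E$. Moreover, all integrability side-conditions arising below are automatic: every integrand we integrate is a $\BornMeas$-morphism valued in $\RR$, and every measure against which it is integrated is of bounded support, so that Proposition \ref{thm:morphisms_integrable} applies.

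For the left unit law I would fix $\mu \in MX$ and a measurable $E \subs X$ and compute, using Proposition \ref{thm:integ_wrt_delta_is_evaln} (integration against a Dirac measure is evaluation),
$$(\kappa_X(\delta_{MX}(\mu)))(E) = \int \Ev_E \; d\,\delta_{MX}(\mu) = \Ev_E(\mu) = \mu(E)\;.$$
For the right unit law I would first note that $\Ev_E \circ \delta_X = [E]$, the characteristic function of $E$, since $(\Ev_E \circ \delta_X)(x) = \delta_{X,x}(E) = [E](x)$; then, applying Proposition \ref{thm:integ_of_composite} to $X \xrightarrow{\delta_X} MX \xrightarrow{\Ev_E} \RR$ and to $\mu \in MX$,
$$(\kappa_X(M\delta_X(\mu)))(E) = \int \Ev_E \; d\,M\delta_X(\mu) = \int \Ev_E \circ \delta_X \; d\mu = \int [E]\; d\mu = \mu(E)\;.$$

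For the associativity law the key remark is that, for any $X$, the $\BornMeas$-morphism $\Ev_E \circ \kappa_X : MMX \rightarrow \RR$ coincides with the lift $(\Ev_E)^\sharp$ of $\Ev_E : MX \rightarrow \RR$ furnished by Proposition \ref{thm:lifts_of_real_fns}(i), because $(\Ev_E \circ \kappa_X)(\M) = (\kappa_X(\M))(E) = \int \Ev_E \; d\M = (\Ev_E)^\sharp(\M)$ for all $\M \in MMX$. Fixing $\mathcal{N} \in MMMX$ and a measurable $E \subs X$, I would compute the left-hand side by Proposition \ref{thm:integ_of_composite} applied to $MMX \xrightarrow{\kappa_X} MX \xrightarrow{\Ev_E} \RR$ and $\mathcal{N}$,
$$(\kappa_X(M\kappa_X(\mathcal{N})))(E) = \int \Ev_E \; d\,M\kappa_X(\mathcal{N}) = \int \Ev_E \circ \kappa_X \; d\mathcal{N} = \int (\Ev_E)^\sharp \; d\mathcal{N}\;,$$
and the right-hand side by Proposition \ref{thm:lifts_of_real_fns}(2) applied with $MX$ in place of the base object $X$ — so that the morphism there is $\Ev_E : MX \rightarrow \RR$ and $\mathcal{N} \in MM(MX)$ plays the role of $\M$ —
$$(\kappa_X(\kappa_{MX}(\mathcal{N})))(E) = \int \Ev_E \; d\,\kappa_{MX}(\mathcal{N}) = \int (\Ev_E)^\sharp \; d\mathcal{N}\;.$$
Comparing the two displays gives the associativity identity.

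I expect the only genuine subtlety to be bookkeeping in the associativity step: one must invoke Proposition \ref{thm:lifts_of_real_fns} with $MX$, rather than $X$, as the underlying object, so that there $(-)^\sharp$ produces morphisms $MMX \rightarrow \RR$ and the quoted identity reads $\int g^\sharp \; d\mathcal{N} = \int g \; d\,\kappa_{MX}(\mathcal{N})$ for $\mathcal{N} \in MMMX$. Once this is kept straight, the three verifications are short, and nothing beyond Propositions \ref{thm:integ_wrt_delta_is_evaln}, \ref{thm:integ_of_composite}, \ref{thm:lifts_of_real_fns}, and \ref{thm:morphisms_integrable} — together with the fact that a finite signed measure is determined by its values on measurable sets — is needed.
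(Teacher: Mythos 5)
Your proposal is correct and follows essentially the same route as the paper's own proof: both unit laws are verified on evaluation maps $\Ev_E$ via Propositions \ref{thm:integ_wrt_delta_is_evaln} and \ref{thm:integ_of_composite}, and associativity is reduced to the identity $\Ev_E \circ \kappa_X = \Ev_E^\sharp$ together with Proposition \ref{thm:lifts_of_real_fns}(2) applied at the object $MX$. The ``subtlety'' you flag about instantiating \ref{thm:lifts_of_real_fns} at $MX$ rather than $X$ is exactly the step the paper takes, so nothing further is needed.
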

\begin{proof}
It remains only to establish the unit and associativity laws
$$\kappa \cdot \delta M = 1_M = \kappa \cdot M \delta \;\;\;\;\text{and}\;\;\;\;\kappa \cdot M\kappa = \kappa \cdot \kappa M\;\;.$$

For each $\mu \in MX$ and each measurable $E \subs X$, we deduce that
$$(\kappa_X \circ \delta_{MX}(\mu))(E) = \int \Ev_E \;d\delta_\mu = \Ev_E(\mu) = \mu(E)$$
by \ref{thm:integ_wrt_delta_is_evaln}.  Also, using Proposition \ref{thm:integ_of_composite}
$$
\begin{array}{lllllll}
(\kappa_X \circ M\delta_X (\mu))(E) & = & \int \Ev_E\;d\,M\delta_X(\mu) & = & \int \Ev_E \circ \delta_X \;d\mu \\
                                    & = & \int [E]\;d\mu                & = & \mu(E)\;.
\end{array}
$$

For the associativity law, let $\mathfrak{M} \in MMMX$.  For each measurable $E \subs X$, since the composite $MMX \xrightarrow{\kappa_X} MX \xrightarrow{\Ev_E} \RR$ is $\Ev_E^\sharp$ (see \ref{thm:lifts_of_real_fns}), we compute, using Propositions \ref{thm:integ_of_composite} and \ref{thm:lifts_of_real_fns} that
$$
\begin{array}{lllllll}
 (\kappa_X \circ M\kappa_X (\mathfrak{M}))(E) & = & \int \Ev_E \;d\,M\kappa_X(\mathfrak{M})          & = & \int \Ev_E \circ \kappa_X \;d\mathfrak{M} \\
                                              & = & \int \Ev_E^\sharp \;d\mathfrak{M}                               & = & \int \Ev_E \;d\,\kappa_{MX}(\mathfrak{M}) \\
                                              & = & (\kappa_X \circ \kappa_{MX}(\mathfrak{M}))(E)\;. &   &
\end{array}
$$
\end{proof}

\section{The vector space structure on $\MM$-algebras}

\begin{DefSub}
Let $\LL = (L,\varsigma,\tau)$ be the monad induced by the adjunction between the forgetful functor $\RVect \rightarrow \Set$ and its left adjoint.  Hence $L:\Set \rightarrow \Set$ associates to each set $X$ the (set underlying the) free vector space
$$LX = \bigoplus_{x \in X}\RR x$$
generated by $X$, consisting of formal linear combinations of the elements of $X$.  The map $\varsigma_X:X \rightarrow LX$ is just the injection of generators and may be taken to be a subset inclusion.
\end{DefSub}

\begin{RemSub}
Recall that $\RVect$ is isomorphic to the category of algebras $\Set^\LL$ of $\LL$.
\end{RemSub}

\begin{DefSub} \label{def:Delta}
Let $U:\BornMeas \rightarrow \Set$ be the forgetful functor.  For each object $X \in \BornMeas$, since the underlying set $UMX$ of $MX$ carries the structure of a real vector space, the function $U\delta_X:UX \rightarrow UMX$ (\ref{def:unit}) induces a unique linear map $\Delta_X:LUX \rightarrow UMX$ such that
$$
\xymatrix{
{UX} \ar@{^{(}->}[r]^{\varsigma_{UX}} \ar[dr]_{U\delta_X} & {LUX} \ar[d]^{\Delta_X} \\
                                                       & {UMX}                   
}
$$
commutes.
\end{DefSub}

\begin{LemSub}
The maps $\Delta_X$ of Definition \ref{def:Delta} constitute a natural transformation $\Delta:LU \rightarrow UM$.
\end{LemSub}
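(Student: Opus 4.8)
The plan is to use the fact that $\Delta_X$ is, by construction, a \emph{linear} map, so that — since $LUX$ is the free real vector space on the set $UX$ with $\varsigma_{UX}$ the inclusion of generators — naturality need only be verified after restriction along $\varsigma_{UX}$ to the generating set $UX$.

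First I would fix a $\BornMeas$-morphism $f:X \rightarrow Y$ and observe that the two composites
$$UMf \circ \Delta_X \quad\text{and}\quad \Delta_Y \circ LUf : LUX \longrightarrow UMY$$
both underlie linear maps of real vector spaces: $\Delta_X$ and $\Delta_Y$ are linear by Definition \ref{def:Delta}; $LUf$ underlies the linear map between free vector spaces induced by the function $Uf$; and $UMf$ underlies a linear map because the proposition in the section on the endofunctor identifies $Mf:MX \rightarrow MY$ as a restriction of the linear map $\SM f:\SM X \rightarrow \SM Y$. Since any two linear maps out of $LUX$ agree as soon as they agree on the generating set $UX$, it suffices to prove $UMf \circ \Delta_X \circ \varsigma_{UX} = \Delta_Y \circ LUf \circ \varsigma_{UX}$ as functions $UX \rightarrow UMY$.

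Then I would evaluate both sides at an arbitrary point $x \in UX$. On the left, $\Delta_X \circ \varsigma_{UX} = U\delta_X$ by Definition \ref{def:Delta}, so the left side sends $x$ to $UMf(\delta_{X,x}) = \SM f(\delta_{X,x})$; and the direct image of a Dirac measure along $f$ is again a Dirac measure, since $(\SM f(\delta_{X,x}))(F) = \delta_{X,x}(f^{-1}(F)) = [f^{-1}(F)](x) = [F](f(x))$ for each measurable $F \subs Y$, whence this equals $\delta_{Y,f(x)} = U\delta_Y(Uf(x))$ — equivalently, one may simply invoke the naturality of $\delta:1_{\BornMeas} \rightarrow M$. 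On the right, naturality of the unit $\varsigma$ of the monad $\LL$ gives $LUf \circ \varsigma_{UX} = \varsigma_{UY} \circ Uf$, so the right side sends $x$ to $\Delta_Y(\varsigma_{UY}(Uf(x))) = U\delta_Y(Uf(x))$, again by Definition \ref{def:Delta}. The two values coincide, so the naturality square commutes. I do not expect a genuine obstacle here; the only points meriting care are the linearity of $UMf$, which rests on the earlier identification of $Mf$ as a restriction of $\SM f$, and keeping the two units $\varsigma$ and $\delta$ straight in the computation.
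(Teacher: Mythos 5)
Your argument is correct and is essentially the paper's own proof: both reduce the naturality square to the generating set $UX$ by observing that all four maps are linear, and then conclude on generators from $\Delta_X \circ \varsigma_{UX} = U\delta_X$ together with the naturality of $\delta$. Your extra verification that direct images of Dirac measures are Dirac measures is a harmless elaboration of that last step.
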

\begin{proof}
Let $f:X \rightarrow Y$ in $\BornMeas$.  All the morphisms in the diagram
$$
\xymatrix{
{LUX} \ar[r]^{\Delta_X} \ar[d]_{LUf} & {UMX} \ar[d]^{UMf} \\
{LUY} \ar[r]^{\Delta_Y}              & {UMY}              \\
}
$$
are linear maps with respect to the given vector space structures.  Hence it suffices to check the commutativity of this diagram on each element $x$ of the basis $UX$ for $LUX$, and indeed
$$UMf \circ \Delta_X(x) = Mf(\delta_X(x)) = \delta_Y(f(x)) = \Delta_Y(f(x)) = \Delta_Y \circ LUf(y)$$
by the naturality of $\delta:1_{\BornMeas} \rightarrow M$.
\end{proof}

\begin{PropSub}
The forgetful functor $U:\BornMeas \rightarrow \Set$ and the natural transformation $\Delta:LU \rightarrow UM$ constitute a \emph{monad morphism (\cite{St}\cite{LS})}
$$\mathbf{\Delta} := (U,\Delta):\MM \rightarrow \LL\;.$$
\end{PropSub}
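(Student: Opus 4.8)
The plan is to verify directly the two axioms of a monad morphism $\mathbf{\Delta} = (U,\Delta) : \MM \to \LL$ in the sense of \cite{St}: the \emph{unit law} $\Delta \cdot (\varsigma U) = U\delta$, an equation of natural transformations $U \to UM$, and the \emph{multiplication law}
$$\Delta \cdot (\tau U) \;=\; (U\kappa) \cdot (\Delta M) \cdot (L\Delta)\;,$$
an equation of natural transformations $LLU \to UM$. The unit law needs nothing new: its $X$-component reads $\Delta_X \circ \varsigma_{UX} = U\delta_X$, which is precisely the commuting triangle that defines $\Delta_X$ in Definition \ref{def:Delta}. So the content lies entirely in the multiplication law.

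Fix $X \in \BornMeas$; we must show $U\kappa_X \circ \Delta_{MX} \circ L\Delta_X = \Delta_X \circ \tau_{UX}$ as maps $LLUX \to UMX$. The first observation is that both composites are $\RR$-\emph{linear}: the maps $\tau_{UX}$, $L\Delta_X$, $\Delta_{MX}$ and $\Delta_X$ are linear by construction, while $\kappa_X : MMX \to MX$ is linear because $(\kappa_X(\M))(E) = \int \Ev_E\,d\M$ depends linearly on $\M$. Since $LLUX$ is the free real vector space on the set $LUX$, it therefore suffices to check the identity on an arbitrary basis vector, i.e.\ on an element $v \in LUX$ viewed inside $LLUX$.

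On the right-hand side, the unit law of the monad $\LL$ gives $\tau_{UX}(v) = v$, so we land on $\Delta_X(v) \in UMX$. On the left-hand side, I would trace $v$ through the three maps in turn: $L\Delta_X$ sends the basis vector $v$ to the basis vector $\Delta_X(v) \in UMX$ of $LUMX$; then $\Delta_{MX}$, by its defining triangle (Definition \ref{def:Delta} applied to the object $MX$), sends that basis vector to $U\delta_{MX}(\Delta_X(v)) = \delta_{MX,\Delta_X(v)}$, the Dirac measure on $MX$ concentrated at the point $\Delta_X(v)$; and finally $U\kappa_X = \kappa_X$ sends $\delta_{MX,\Delta_X(v)}$ back to $\Delta_X(v)$, because $(\kappa_X(\delta_{MX,\mu}))(E) = \int \Ev_E\,d\delta_{MX,\mu} = \Ev_E(\mu) = \mu(E)$ for each measurable $E \subs X$ by \ref{thm:integ_wrt_delta_is_evaln} (equivalently, by the unit law $\kappa\cdot\delta M = 1_M$ of $\MM$). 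Hence both sides agree at $v$, and the multiplication law follows by linearity.

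I do not anticipate a real obstacle. The only thing requiring care is the bookkeeping of the two distinct roles played by an element of $LUX$ — as a vector, and as a basis element of $LLUX$ one level up — together with the observation that the reduction to basis vectors is legitimate, which rests on the linearity of every map involved, $\kappa_X$ included.
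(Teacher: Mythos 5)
Your proposal is correct and is essentially the paper's own argument: the paper likewise dispatches the unit law as the defining triangle of $\Delta_X$, reduces the multiplication law to basis vectors by linearity of every component (including $\kappa_X$), and then runs the same chain --- naturality of $\varsigma$, the triangle $\Delta \cdot \varsigma U = U\delta$, and the unit laws of $\MM$ and $\LL$ --- only written arrow-theoretically as $U\kappa \cdot \Delta M \cdot L\Delta \cdot \varsigma LU = \cdots = \Delta \cdot \tau U \cdot \varsigma LU$ rather than element-by-element. Your elementwise trace through $v \mapsto \Delta_X(v) \mapsto \delta_{MX,\Delta_X(v)} \mapsto \Delta_X(v)$ is just the pointwise form of that computation.
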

\begin{proof}
By its very definition, $\Delta$ satisfies the equation $\Delta \cdot \varsigma U = U\delta$.  Hence it suffices to show that the diagram
$$
\xymatrix{
{LLU} \ar[r]^{L\Delta} \ar[d]_{\tau U} & {LUM} \ar[r]^{\Delta M} & {UMM} \ar[d]^{U\kappa} \\
{LU}  \ar[rr]^{\Delta}                  &                         & {UM}
}
$$
commutes.  It is clear from the definition of $\kappa$ that its components $\kappa_X:MMX \rightarrow MX$ are linear maps.  In fact, each component of each of the natural transformations in the given diagram is linear with respect to the given vector space structures.  Hence it suffices to show that the composites
$$LU \xrightarrow{\varsigma LU} LLU \xrightarrow{L\Delta} LUM \xrightarrow{\Delta M} UMM \xrightarrow{U\kappa} UM$$
$$LU \xrightarrow{\varsigma LU} LLU \xrightarrow{\tau U} LU \xrightarrow{\Delta} UM$$
are equal, and indeed we compute that
$U\kappa \cdot \Delta M \cdot L\Delta \cdot \varsigma LU = U\kappa \cdot \Delta M \cdot \varsigma UM \cdot \Delta = U\kappa \cdot U\delta M \cdot \Delta = \Delta = \Delta \cdot \tau U \cdot \varsigma LU$, using the naturality of $\varsigma$, the equation $\Delta \cdot \varsigma U = U\delta$, and the unit laws for $\MM$ and $\LL$.
\end{proof}

\begin{CorSub} \label{thm:Malg_vect_sp}
The monad morphism $\mathbf{\Delta} = (U,\Delta):\MM \rightarrow \LL$ induces a functor
$$U^\mathbf{\Delta}:\BornMeas^{\MM} \rightarrow \Set^{\LL} \cong \RVect$$
which endows every $\MM$-algebra with the structure of a real vector space, and every $\MM$-homomorphism is thus a linear map.

For an $\MM$-algebra $(X,c:MX \rightarrow X)$, the addition and scalar multiplication maps of the associated vector space are the composites
$$X \times X \xrightarrow{\delta_X \times \delta_X} MX \times MX \xrightarrow{+} MX \xrightarrow{c} X$$
$$\RR \times X \xrightarrow{1_\RR \times \delta_X} \RR \times MX \xrightarrow{\cdot} MX \xrightarrow{c} X\;.$$ 
The vector space structure associated to the free $\MM$-algebra $MX$ by $U^\mathbf{\Delta}$ coincides with the given structure on $MX$ (\ref{rem:MX_direct_limit}).
\end{CorSub}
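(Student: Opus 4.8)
The plan is to obtain the functor $U^{\mathbf{\Delta}}$ from the general theory of monad morphisms and then to identify, by direct computation, both the induced vector space operations and the structure on free algebras; everything else is bookkeeping.

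First I would invoke the standard fact that a monad morphism $(U,\Delta):\MM \to \LL$ over the functor $U:\BornMeas \to \Set$ (in the sense of \cite{St}\cite{LS}) induces a functor $U^{\mathbf{\Delta}}:\BornMeas^{\MM} \to \Set^{\LL}$ which acts as $U$ on underlying sets and morphisms and which sends an $\MM$-algebra $(X,c:MX \to X)$ to the $\LL$-algebra with structure map $Uc \circ \Delta_X : LUX \to UX$. The $\LL$-algebra unit law for this structure map is immediate from the $\MM$-algebra unit law $c \circ \delta_X = \id_X$ together with the defining identity $\Delta_X \circ \varsigma_{UX} = U\delta_X$; the associativity law follows by combining the commutative square that makes $\mathbf{\Delta}$ a monad morphism with the $\MM$-algebra associativity law $c \circ Mc = c \circ \kappa_X$ and the naturality of $\Delta$, which is the standard verification underlying the monad-morphism construction and which I would cite rather than reproduce. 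Composing $U^{\mathbf{\Delta}}$ with the isomorphism $\Set^{\LL} \cong \RVect$ then equips every $\MM$-algebra with a real vector space structure, and since $U^{\mathbf{\Delta}}$ is a functor and the isomorphism $\Set^{\LL} \cong \RVect$ carries $\LL$-homomorphisms to linear maps, every $\MM$-homomorphism is thereby linear.

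Next I would make the operations explicit. Under $\Set^{\LL} \cong \RVect$, an $\LL$-algebra with structure map $a:LY\to Y$ has addition $(y,y')\mapsto a(\varsigma_Y(y)+\varsigma_Y(y'))$ and scalar multiplication $(r,y)\mapsto a(r\cdot\varsigma_Y(y))$, the operations inside the argument being those of the free vector space $LY$. Taking $Y = UX$ and $a = Uc\circ\Delta_X$, and using that $\Delta_X$ is linear with $\Delta_X\circ\varsigma_{UX} = U\delta_X$, the element $\Delta_X(\varsigma_{UX}(x)+\varsigma_{UX}(x'))$ reduces to the sum $\delta_X(x)+\delta_X(x')$ computed in $MX$, and $\Delta_X(r\cdot\varsigma_{UX}(x))$ reduces to the scalar multiple $r\cdot\delta_X(x)$ in $MX$; applying $c$ then produces exactly the two composites displayed in the statement. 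The only point requiring care here is keeping straight which vector space structure is in play at each stage — that of the free vector space $LUX$, that of $MX$ as a subspace of $\SM X$, and the one being defined on $X$ — and invoking linearity of $\Delta_X$ at the right moment.

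Finally, for the free algebra $MX$, whose $\MM$-algebra structure map is $\kappa_X:MMX\to MX$, the induced $\LL$-algebra structure map is $U\kappa_X\circ\Delta_{MX}$. Running the same computation with $Y = UMX$, and recalling from the construction of $\kappa$ that each $\kappa_X$ is a linear map, the induced addition sends $(\mu,\nu)$ to $\kappa_X(\delta_{MX}(\mu)+\delta_{MX}(\nu)) = \kappa_X(\delta_{MX}(\mu))+\kappa_X(\delta_{MX}(\nu))$, which equals $\mu+\nu$ by the monad unit law $\kappa\cdot\delta M = \id_M$, i.e.\ $\kappa_X\circ\delta_{MX} = \id_{MX}$; likewise the induced scalar multiplication sends $(r,\mu)$ to $\kappa_X(r\cdot\delta_{MX}(\mu)) = r\cdot\kappa_X(\delta_{MX}(\mu)) = r\cdot\mu$. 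Hence the vector space structure that $U^{\mathbf{\Delta}}$ places on $MX$ coincides with the given one. Since this is a corollary of the monad-morphism result, there is no serious obstacle; the only delicate moments are the bookkeeping of the various vector space structures noted above and the careful use of the linearity of $\Delta$ and of $\kappa$.
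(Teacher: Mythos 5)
Your proposal is correct and follows essentially the same route as the paper: cite the general fact that a monad morphism induces a functor between the Eilenberg--Moore categories, then identify the induced operations by combining the defining equation $\Delta_X \circ \varsigma_{UX} = U\delta_X$ with the linearity of $\Delta_X$, and handle the free algebra $MX$ via the linearity of $\kappa_X$ together with the unit law $\kappa_X \circ \delta_{MX} = 1_{MX}$. The only cosmetic difference is that you compute elementwise where the paper draws the corresponding commutative diagrams.
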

\begin{proof}
Any monad morphism induces such a functor (see \cite{LS}, \S 2.1), which in the present case is given on objects by $U^\mathbf{\Delta}(X,c) = (UX,LUX \xrightarrow{\Delta_X} UMX \xrightarrow{Uc} UX)$ and commutes with the forgetful functors to $\Set$.  The addition and scalar multiplication maps of the associated vector space are the composites
$$UX \times UX \xrightarrow{\varsigma_{UX} \times \varsigma_{UX}} LUX \times LUX \xrightarrow{+} LUX \xrightarrow{\Delta_X} UMX \xrightarrow{Uc} UX$$
$$\RR \times UX \xrightarrow{1_\RR \times \varsigma_{UX}} \RR \times LUX \xrightarrow{\cdot} LUX \xrightarrow{\Delta_X} UMX \xrightarrow{Uc} UX \;.$$
The first of these coincides with the first composite given above, since the diagram
$$
\xymatrix{
UX \times UX \ar[rr]^{\varsigma_{UX} \times \varsigma_{UX}} \ar[drr]_{U \delta_X \times U \delta_X} & & LUX \times LUX \ar[d]^{\Delta_X \times \Delta_X} \ar[r]^(0.6){+} & LUX \ar[d]^{\Delta_X}    &    \\
                                                                                                  & & UMX \times UMX \ar[r]^(0.6){+}                                   & UMX \ar[r]^{Uc}          & UX \\
}
$$
commutes, as $\mathbf{\Delta}$ is a monad morphism and $\Delta_X$ is a linear map.  We reason analogously with regard to the second composite.

The addition operation with which the functor $U^\mathbf{\Delta}$ endows a free $\MM$-algebra $MX$ coincides with the usual addition operation on measures, since the diagram
$$
\xymatrix{
MX \times MX \ar[rr]^{\delta_{MX} \times \delta_{MX}} \ar@{=}[drr] & & MMX \times MMX \ar[d]^{\kappa_X \times \kappa_X} \ar[r]^(0.6){+} & MMX \ar[d]^{\kappa_X} \\
                                                                   & & MX \times MX \ar[r]^(0.6){+}                                     & MX                    \\
}
$$
commutes, using a unit law for $\MM$ and the fact that $\kappa_X$ is linear with respect to the usual operations on $MX$.  Analogous reasoning applies with regard to the scalar multiplication operation.
\end{proof}

\begin{CorSub} \label{thm:struct_map_linear}
For any $\MM$-algebra $(X,c)$, the structure map $c:MX \rightarrow X$ is linear. 
\end{CorSub}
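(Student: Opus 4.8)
The plan is to exhibit $c$ as an $\MM$-homomorphism and then invoke Corollary \ref{thm:Malg_vect_sp}. First I would recall the standard fact that for an Eilenberg-Moore algebra $(X,c)$ of any monad, the structure map is itself a morphism of algebras from the free algebra to $(X,c)$: the associativity axiom for the $\MM$-algebra $(X,c)$ reads $c \circ Mc = c \circ \kappa_X$, which is precisely the assertion that
$$c:(MX,\kappa_X) \longrightarrow (X,c)$$
is a morphism of $\MM$-algebras, i.e.\ an $\MM$-homomorphism.

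Next I would apply Corollary \ref{thm:Malg_vect_sp}, according to which the functor $U^\mathbf{\Delta}:\BornMeas^{\MM} \rightarrow \RVect$ carries every $\MM$-homomorphism to a linear map. Hence $c$ is linear with respect to the vector space structures that $U^\mathbf{\Delta}$ assigns to the free $\MM$-algebra $MX$ and to the $\MM$-algebra $X$. Finally, I would use the last sentence of Corollary \ref{thm:Malg_vect_sp}, which records that the vector space structure induced by $U^\mathbf{\Delta}$ on the free algebra $MX$ coincides with the usual structure on $MX$ from Remark \ref{rem:MX_direct_limit}. Combining these three observations shows that $c:MX \rightarrow X$ is linear with respect to the usual vector space structure on $MX$ and the structure on $X$ furnished by $U^\mathbf{\Delta}$, as claimed.

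I do not anticipate any real obstacle: the result is a direct corollary of Corollary \ref{thm:Malg_vect_sp}, and the only point needing (already-completed) care is the identification of the free-algebra vector space structure on $MX$ with the given one, which that corollary supplies.
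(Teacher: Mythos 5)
Your proposal is correct and follows exactly the paper's own argument: the paper's proof reads ``This follows from \ref{thm:Malg_vect_sp}, since $c$ is an $\MM$-homomorphism,'' and you have simply spelled out the two ingredients (the associativity axiom exhibiting $c:(MX,\kappa_X)\rightarrow(X,c)$ as an $\MM$-homomorphism, and the final clause of \ref{thm:Malg_vect_sp} identifying the induced vector space structure on the free algebra $MX$ with the usual one). No gaps.
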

\begin{proof}
This follows from \ref{thm:Malg_vect_sp}, since $c$ is an $\MM$-homomorphism.
\end{proof}

\section{$\MM$-algebras as bornological vector spaces}
\begin{LemSub} \label{thm:R_ring_obj_in_BornMeas}
$\RR$ is a ring object in $\BornMeas$.  Equivalently, the addition and multiplication operations $+,\cdot:\RR \times \RR \rightarrow \RR$ are measurable and bornological, where the product $\RR \times \RR$ is taken in $\BornMeas$. 
\end{LemSub}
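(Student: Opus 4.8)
The plan is to first pin down the bornology and the sigma-algebra that $\BornMeas$ puts on the product $\RR \times \RR$, then check that $+$ and $\cdot$ respect each of these two structures, and finally dispatch the ring axioms using faithfulness of the underlying-set functor. Throughout, recall that $\RR \in \BornMeas$ carries its Borel sigma-algebra together with the bornology whose bounded sets are the (metrically) bounded subsets of $\RR$.

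By Corollary \ref{thm:bornmeas_topological}, the product $\RR \times \RR$ formed in $\BornMeas$ is the cartesian product of underlying sets endowed with the initial bornology and initial sigma-algebra induced by the two projections $\pi_1, \pi_2 : \RR \times \RR \to \RR$. By Proposition \ref{thm:initial_final}(1), a set $B \subs \RR \times \RR$ is bounded iff $\pi_1(B)$ and $\pi_2(B)$ are bounded in $\RR$; equivalently, the squares $[-r,r]^2$ with $r > 0$ form a basis for this bornology. By Proposition \ref{thm:initial_final}(2), the sigma-algebra on $\RR \times \RR$ is generated by the sets $\pi_i^{-1}(F)$ with $F \subs \RR$ Borel, i.e. it is the usual product sigma-algebra.

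Bornologicality is then immediate: $+([-r,r]^2) \subs [-2r,2r]$ and $\cdot([-r,r]^2) \subs [-r^2,r^2]$, so each operation carries a basic bounded square into a bounded subset of $\RR$, and since the bornology of $\RR$ is downward closed this is enough. For measurability, the one point that needs care — and the main (if modest) obstacle — is that the product sigma-algebra on $\RR \times \RR$ coincides with its Borel sigma-algebra: this holds because $\RR$ is second countable, so every open subset of $\RR \times \RR$ is a countable union of open rectangles $(a,b)\times(c,d)$ with rational corners, each of which lies in the product sigma-algebra. Since $+$ and $\cdot$ are continuous maps $\RR \times \RR \to \RR$, they are therefore Borel-to-Borel measurable, hence measurable as $\BornMeas$-morphisms $\RR \times \RR \to \RR$. (Alternatively one avoids quoting the equality of sigma-algebras and instead checks directly that $+^{-1}((-\infty,a))$ and $\cdot^{-1}((-\infty,a))$, being open, are countable unions of such rational rectangles, noting that the rays $(-\infty,a)$ generate the Borel sigma-algebra of $\RR$.)

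Finally, to conclude that these data make $\RR$ a ring object, one also records that the zero and unit $0,1 : 1 \to \RR$ are $\BornMeas$-morphisms (any map out of the terminal object is), and that negation $n : \RR \to \RR$, $n(x) = -x$, is a $\BornMeas$-morphism (it is continuous, hence measurable, and $n([-r,r]) = [-r,r]$, hence bornological). Each ring axiom is an equation between morphisms of $\BornMeas$ assembled from $+$, $\cdot$, $n$, $0$, $1$, $\id$, projections and diagonals; these equations hold in $\Set$ because $\RR$ is a ring, and since the forgetful functor $\BornMeas \to \Set$ is faithful (Definition \ref{def:topological}), they hold in $\BornMeas$ as well.
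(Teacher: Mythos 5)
Your proof is correct and follows essentially the same route as the paper: bornologicality is checked directly on basic bounded sets, and measurability is obtained from continuity of $+$ and $\cdot$ together with the coincidence of the product and Borel sigma-algebras on $\RR \times \RR$ (which the paper cites from the literature and you instead derive from second countability). The extra material on $0$, $1$, negation, and the ring axioms via faithfulness of $\BornMeas \rightarrow \Set$ is a harmless elaboration of what the paper treats as implicit.
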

\begin{proof}
It is straightforward to show that the given maps are bornological.  Also, since these maps are continuous, they are Borel measurable, and the Borel $\sigma$-algebra on $\RR \times \RR$ coincides with the product sigma-algebra (e.g., by \cite{Bo}, 6.4.2).
\end{proof}

\begin{DefSub} \label{def:bvs_mvs_mbvs}
A (real) \textit{bornological vector space} (resp. \textit{measurable vector space}, \textit{measurable bornological vector space}) is an $\RR$-vector-space object in $\Born$ (resp. $\Meas$, $\BornMeas$).  Hence a real vector space $V$ is a bornological (resp. measurable, measurable bornological) vector space if $V$ is endowed with a bornology (and/or sigma-algebra) making the addition and scalar multiplication maps bornological (and/or measurable) as maps defined on the products $V \times V$, $\RR \times V$ taken in $\Born$ (resp. $\Meas$, $\BornMeas$).  We define associated categories $\RVect(\Born)$, $\RVect(\Meas)$, and $\RVect(\BornMeas)$, whose morphisms are linear maps that are, accordingly, bornological and/or measurable.  It is conventional to use the term \textit{bounded linear map} to mean bornological linear map.
\end{DefSub}

\begin{LemSub} \label{thm:MX_mbvs}
Let $X \in \BornMeas$.  Then $MX$ is a measurable bornological vector space.  Hence we obtain a functor $M:\BornMeas \rightarrow \RVect(\BornMeas)$.
\end{LemSub}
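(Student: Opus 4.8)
The plan is to check separately that the setwise addition $+ : MX \times MX \to MX$ and scalar multiplication $\cdot : \RR \times MX \to MX$ are bornological and measurable, where the products are formed in $\BornMeas$. These maps are well defined since $MX$ is a vector subspace of $\SM X$, and hence a real vector space under the setwise operations; a routine check will then confirm that $Mf$ is a morphism of such objects for every $\BornMeas$-morphism $f$, yielding the functor $M : \BornMeas \to \RVect(\BornMeas)$.

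For the bornological part I would argue directly from the basis $M(B,X,\gamma)$ of the supportwise bornology. Given bounded sets $C_i \subs M(B_i,X,\gamma_i)$ for $i = 1,2$, each $\mu_i \in C_i$ is the direct image along $B_i \hookrightarrow X$ of a measure on $B_i$, hence also the direct image along $B_1 \cup B_2 \hookrightarrow X$ of a measure on $B_1 \cup B_2$ (using functoriality of $\SM$ and linearity of $\SM \iota_{B_1 \cup B_2,X}$); since the sum of direct images is the direct image of the sum and the total variation norm is subadditive, one gets $C_1 + C_2 \subs M(B_1 \cup B_2, X, \gamma_1 + \gamma_2)$, which is bounded as $B_1 \cup B_2 \in \B X$. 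Likewise, if a set of scalars lies in $[-r,r]$ and $C \subs M(B,X,\gamma)$, then the set of products is contained in $M(B,X,r\gamma)$, using $||a\mu|| = |a|\,||\mu||$ and the fact that $a\mu$ is supported by $B$ whenever $\mu$ is. Alternatively, one may simply invoke \ref{rem:norm_bornology}, which already records that $MX$ with its supportwise bornology is a direct limit of the bornological vector spaces $\SM B$ in the category of bornological vector spaces, whence the operations are automatically bornological.

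For the measurable part, recall from \ref{def:SX} and the remark following it that $MX$ carries the initial sigma-algebra induced by the evaluation maps $\Ev_E : MX \to \RR$, $E \subs X$ measurable; so a map into $MX$ is measurable as soon as its composite with each $\Ev_E$ is. Now $\Ev_E \circ (+)$ sends $(\mu_1,\mu_2)$ to $\mu_1(E) + \mu_2(E)$, i.e.\ it is the composite of $\Ev_E \times \Ev_E : MX \times MX \to \RR \times \RR$ with the addition map of $\RR$. By \ref{thm:bornmeas_topological} the product $MX \times MX$ in $\BornMeas$ carries the initial sigma-algebra with respect to its two projections, so $\Ev_E \times \Ev_E$ is measurable into the product $\RR \times \RR$, whose sigma-algebra is the Borel one (as in the proof of \ref{thm:R_ring_obj_in_BornMeas}); since $+ : \RR \times \RR \to \RR$ is measurable by \ref{thm:R_ring_obj_in_BornMeas}, the composite $\Ev_E \circ (+)$ is measurable, and therefore so is $+ : MX \times MX \to MX$. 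The same argument, with $1_\RR \times \Ev_E$ in place of $\Ev_E \times \Ev_E$ and the multiplication of $\RR$ in place of its addition, shows that scalar multiplication is measurable. Finally, for each $\BornMeas$-morphism $f : X \to Y$ the map $Mf : MX \to MY$ is linear and is a $\BornMeas$-morphism (by the proposition establishing the endofunctor $M : \BornMeas \to \BornMeas$), hence a morphism of measurable bornological vector spaces, and functoriality is inherited from $M : \BornMeas \to \BornMeas$. I do not expect a genuine obstacle here; the only point needing care is the bookkeeping that identifies the sigma-algebras on the $\BornMeas$-products $MX \times MX$ and $\RR \times MX$ with the initial ones, so that $\Ev_E \times \Ev_E$ and $1_\RR \times \Ev_E$ are recognized as measurable.
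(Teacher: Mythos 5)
Your proof is correct and follows essentially the same route as the paper: the measurability of the operations is checked by post-composing with the evaluation maps $\Ev_E$ and reducing to the measurability of $+,\cdot:\RR\times\RR\to\RR$ from \ref{thm:R_ring_obj_in_BornMeas}, exactly as in the paper, and the bornological part is disposed of by citing \ref{rem:norm_bornology}. Your additional direct verification that $C_1 + C_2 \subs M(B_1\cup B_2,X,\gamma_1+\gamma_2)$ (and the analogous containment for scalar multiples) is a correct, slightly more explicit substitute for that citation, but it is not a different method.
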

\begin{proof}
For each measurable $E \subs X$, the diagrams
$$
\xymatrix{
MX \times MX \ar[r]^{+} \ar[d]_{\Ev_E \times \Ev_E} & MX \ar[d]^{\Ev_E} & & \RR \times MX  \ar[r]^{\cdot} \ar[d]_{1_\RR \times \Ev_E} & MX \ar[d]^{\Ev_E} \\
\RR \times \RR \ar[r]^{+}                           & \RR               & & \RR \times \RR \ar[r]^{\cdot}                             & \RR
}
$$
commute since the evaluation map $\Ev_E$ is linear, so since the bottom-left composites are measurable (using \ref{thm:R_ring_obj_in_BornMeas}), the top-right are as well.  Hence the addition and scalar multiplication maps of $MX$ are measurable.  Also, we know from \ref{rem:norm_bornology} that $MX$ is a bornological vector space.
\end{proof}

\begin{PropSub} \label{thm:Malg_mbvs}
Let $(X,c)$ be an $\MM$-algebra.  Then $X$, endowed the associated vector space structure (\ref{thm:Malg_vect_sp}), is a measurable bornological vector space.  Hence the functor $\BornMeas^{\MM} \rightarrow \RVect$ of \ref{thm:Malg_vect_sp} factors through $\RVect(\BornMeas)$.
\end{PropSub}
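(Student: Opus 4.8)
The plan is to read off the addition and scalar multiplication maps of $X$ from Corollary \ref{thm:Malg_vect_sp}, which exhibits them as explicit composites, and then to observe that each factor in those composites is a $\BornMeas$-morphism, so that the composites are $\BornMeas$-morphisms, i.e.\ measurable and bornological. Since a measurable bornological vector space is by definition (\ref{def:bvs_mvs_mbvs}) exactly an $\RR$-vector-space object in $\BornMeas$, this is all that is required.

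Concretely, by \ref{thm:Malg_vect_sp} the addition map of the vector space underlying $(X,c)$ is the composite
$$X \times X \xrightarrow{\delta_X \times \delta_X} MX \times MX \xrightarrow{+} MX \xrightarrow{c} X\;,$$
where all products are formed in $\BornMeas$; recall from \ref{thm:bornmeas_topological} that such a product is the set-theoretic product equipped with the initial bornology and sigma-algebra. I would then check the three factors in turn: $\delta_X:X \to MX$ is a $\BornMeas$-morphism, so $\delta_X \times \delta_X$ is a $\BornMeas$-morphism by the universal property of the product in $\BornMeas$; the addition map $+:MX \times MX \to MX$ is a $\BornMeas$-morphism by Lemma \ref{thm:MX_mbvs}, which asserts precisely that $MX$ is a measurable bornological vector space; and $c:MX \to X$ is a $\BornMeas$-morphism, being the structure map of an $\MM$-algebra. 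Hence the composite --- the addition map of $X$ --- is a $\BornMeas$-morphism. The identical argument applied to the composite
$$\RR \times X \xrightarrow{1_\RR \times \delta_X} \RR \times MX \xrightarrow{\cdot} MX \xrightarrow{c} X$$
shows that scalar multiplication on $X$ is a $\BornMeas$-morphism, again invoking \ref{thm:MX_mbvs} for the middle factor $\cdot:\RR \times MX \to MX$. Thus $X$ is an $\RR$-vector-space object in $\BornMeas$, as desired. For the final clause, every $\MM$-homomorphism is a $\BornMeas$-morphism by the definition of $\BornMeas^{\MM}$ and is linear by \ref{thm:Malg_vect_sp}, hence is a morphism of $\RVect(\BornMeas)$; so the functor $\BornMeas^{\MM} \to \RVect$ of \ref{thm:Malg_vect_sp} corestricts along $\RVect(\BornMeas) \hookrightarrow \RVect$.

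I do not anticipate a genuine obstacle. The only point calling for a moment's care is the bookkeeping that the products $X \times X$ and $\RR \times X$ appearing in the definition of \emph{measurable bornological vector space} coincide with the products appearing in the composites above --- all being products in $\BornMeas$ --- so that the factors really do compose; this is immediate from \ref{thm:bornmeas_topological}. Everything else is a direct appeal to \ref{thm:Malg_vect_sp} and \ref{thm:MX_mbvs}.
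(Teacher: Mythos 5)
Your proposal is correct and follows exactly the paper's own argument: the addition and scalar multiplication maps of $X$ are read off from Corollary \ref{thm:Malg_vect_sp} as composites of $\BornMeas$-morphisms, with Lemma \ref{thm:MX_mbvs} supplying the middle factors. The extra bookkeeping you include (products in $\BornMeas$, the corestriction for the final clause) is all sound and merely makes explicit what the paper leaves implicit.
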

\begin{proof}
Corollary \ref{thm:Malg_vect_sp} exhibits the addition and scalar multiplication maps of $X$ as composites of what are, by Lemma \ref{thm:MX_mbvs}, measurable bornological maps.
\end{proof}

\begin{DefSub}
A bornological vector space $V$ is \textit{convex} \cite{HN} if the bornology $\B V$ on $V$ has a basis of convex sets; equivalently, for each bounded $B \subs V$, there is some convex bounded subset $C \subs V$ with $B \subs C$.  We define $\RConvBvs$ to be the full subcategory of $\RVect(\Born)$ consisting of convex bornological vector spaces.
\end{DefSub}

\begin{RemSub}
Every convex bornological vector space $V$ acquires the structure of a locally convex topological vector space when we take as a neighbourhood basis for the origin $0 \in V$ the \textit{bornivorous discs}; see \cite{HN}.  This passage is part of an adjunction between $\RConvBvs$ and the category of locally convex spaces; see \cite{FK}.
\end{RemSub}

\begin{ThmSub}
Let $(X,c)$ be an $\MM$-algebra.  Then $X$, endowed the associated vector space structure (\ref{thm:Malg_vect_sp}), is a convex bornological vector space.

Hence the functor $\BornMeas^{\MM} \rightarrow \RVect$ of \ref{thm:Malg_vect_sp} factors through \\$\RConvBvs$.
\end{ThmSub}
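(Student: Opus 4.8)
By Proposition \ref{thm:Malg_mbvs} we already know that $X$, with its associated vector space structure, is a measurable bornological vector space, and in particular lies in $\RVect(\Born)$. Since $\RConvBvs$ is the full subcategory of $\RVect(\Born)$ on those objects whose bornology has a basis of convex sets, the only thing left to prove is that for each bounded $B \subs X$ there is a \emph{convex} bounded subset of $X$ containing $B$. The plan is to produce such a convex bounded set as the image under the structure map $c$ of a suitable basic bounded subset of $MX$.

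Fix a bounded $B \subs X$, and consider the basic bounded subset $M(B,X,2) \subs MX$. This set is convex: by Definition \ref{def:support} the measures supported by $B$ form the vector subspace $\SM(B,X) \subs \SM X$, and, by Remark \ref{rem:total_var_norm}, the condition $||\mu|| \lt 2$ cuts out an open ball about the origin in the total variation norm; hence $M(B,X,2)$ is the intersection of a subspace with an open ball and is therefore convex with respect to the usual vector space structure on $MX$, which by \ref{thm:Malg_vect_sp} is the one relevant here. Now $c:MX \rightarrow X$ is a $\BornMeas$-morphism, hence bornological, so $c(M(B,X,2))$ is a bounded subset of $X$; and $c$ is linear by Corollary \ref{thm:struct_map_linear}, so $c(M(B,X,2))$ is convex, being the image of a convex set under a linear map. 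Finally, for each $x \in B$ the Dirac measure $\delta_{X,x}$ is supported by $\{x\} \subs B$ and has $||\delta_{X,x}|| = 1 \lt 2$, so $\delta_{X,x} \in M(B,X,2)$, while the unit law for the $\MM$-algebra $(X,c)$ gives $c(\delta_{X,x}) = x$. Thus $B \subs c(M(B,X,2))$, and we have exhibited a convex bounded subset of $X$ containing $B$.

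This shows that the bornology of $X$ admits a basis of convex sets, so $X$ is a convex bornological vector space and the functor $\BornMeas^{\MM} \rightarrow \RVect$ of \ref{thm:Malg_vect_sp} factors through $\RConvBvs$. I do not expect a genuine obstacle here: the substantive work was already done in establishing that $c$ is a linear $\BornMeas$-morphism and in the analysis of the supportwise bornology on $MX$; the present argument only needs the elementary observations that balls in the total variation norm are convex, that $\{\mu \mid \mu \text{ supported by } B\}$ is a linear subspace, and that $c \circ \delta_X = \id_X$.
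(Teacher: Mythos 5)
Your proof is correct and follows essentially the same route as the paper: push a convex bounded subset of $MX$ containing the Dirac measures $\delta_x$ ($x \in B$) through the linear bornological map $c$ and use the unit law $c \circ \delta_X = \id_X$. The only (immaterial) difference is your choice of convex set --- the ball $M(B,X,2)$ of measures supported by $B$, rather than the paper's set $P(B,X)$ of probability measures supported by $B$; note also that with the paper's convention $\lt\;=\;\leqslant$ the set $M(B,X,2)$ is cut out by a \emph{closed} ball, not an open one, though this changes nothing.
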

\begin{proof}
By \ref{thm:Malg_mbvs}, it suffices to show that the bornology $\B X$ on $X$ has a basis of convex sets.  Consider any bounded $B \subs X$.   Let $P(B,X)$ be the set of all probability measures on $X$ supported by $B$; i.e., $P(B,X) = PX \cap M(B,X)$ where $PX := \{\mu \in \SM X \mid \mu \gt 0,\;||\mu|| = 1\}$ is the set of all probability measures on $X$.  Then, since $PX$ is a convex subset of the space $\SM X$ of finite signed measures and $M(B,X)$ is a vector subspace of $MX$, $P(B,X)$ is a convex subset of $MX$.  Hence, since $P(B,X) \subs M(B,X,1)$, $P(B,X)$ is, moreover, a bounded convex subset of $MX$.  By \ref{thm:struct_map_linear}, $c:MX \rightarrow X$ is a bornological linear map, so the image $c(P(B,X))$ of $P(B,X)$ under $c$ is a bounded convex subset of $X$.  Further, $B \subs c(P(B,X))$, since for each $x \in B$ we have that the Dirac measure $\delta_x$ is a probability measure supported by $B$, i.e. $\delta_x \in P(B,X)$, and $c(\delta_x) = c \circ \delta_X(x) = x$. 
\end{proof}

\section{Integrals valued in an $\MM$-algebra}

\begin{PropSub}
$\RR$ is an $\MM$-algebra with structure map $c_\RR:M\RR \rightarrow \RR$ given by
$$c_\RR(\mu) = \int \id_\RR \;d\mu\;,$$
where the right-hand-side is the Lebesgue integral of the identity map $\id_\RR:\RR \rightarrow \RR$ with respect to $\mu$.  In fact, $\RR$ is isomorphic to the free $\MM$-algebra $M1$ on the one-point measurable bornological set $1 = \{*\}$.
\end{PropSub}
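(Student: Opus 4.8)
The plan is to verify the two $\MM$-algebra axioms for $(\RR,c_\RR)$ directly, and then to produce an explicit $\MM$-isomorphism between $(\RR,c_\RR)$ and the free algebra $(M1,\kappa_1)$. The observation with which I would begin is that $c_\RR$ is precisely the morphism $\id_\RR^\sharp:M\RR \to \RR$ supplied by Proposition \ref{thm:lifts_of_real_fns}(i) applied to the $\BornMeas$-morphism $\id_\RR:\RR\to\RR$; this makes $c_\RR$ a $\BornMeas$-morphism at once. The unit law $c_\RR \circ \delta_\RR = \id_\RR$ is then immediate from Proposition \ref{thm:integ_wrt_delta_is_evaln}, since $c_\RR(\delta_x)=\int \id_\RR\;d\delta_x = x$. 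For the associativity law $c_\RR \circ \kappa_\RR = c_\RR \circ Mc_\RR$, I would evaluate both sides at $\M \in MM\RR$: the left side equals $\int \id_\RR\;d\,\kappa_\RR(\M) = \int \id_\RR^\sharp\;d\M$ by Proposition \ref{thm:lifts_of_real_fns}(ii), while the right side equals $\int \id_\RR \circ c_\RR\;d\M$ by Proposition \ref{thm:integ_of_composite} (with $\id_\RR \circ c_\RR = c_\RR$ being $\M$-integrable by \ref{thm:morphisms_integrable}); since $\id_\RR^\sharp = c_\RR = \id_\RR \circ c_\RR$, the two sides agree. This establishes that $(\RR,c_\RR)$ is an $\MM$-algebra.

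Next I would pin down $M1$ for the one-point set $1=\{*\}$. Since $\sigma 1 = \{\emptyset,1\}$ and every subset of $1$ is bounded, every finite signed measure on $1$ has bounded support, so $h := \Ev_{\{*\}}:M1\to\RR$, $\mu \mapsto \mu(\{*\})$, is a bijection and a $\BornMeas$-morphism by \ref{thm:EvE_bornmeas_morphism}. Running through the definitions, the supportwise bornology on $M1$ has as a basis the sets of measures of total variation $<\gamma$, and $h$ carries these bijectively onto the intervals $(-\gamma,\gamma)$, a basis for the usual bornology of $\RR$; likewise the initial sigma-algebra on $M1$, generated by $\Ev_\emptyset$ (constant) and $\Ev_{\{*\}}=h$, is carried by $h$ onto the Borel sigma-algebra of $\RR$. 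Hence $h$ is an isomorphism in $\BornMeas$.

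Finally I would check that $h$ is an $\MM$-homomorphism $(M1,\kappa_1)\to(\RR,c_\RR)$, i.e. that $h \circ \kappa_1 = c_\RR \circ Mh$: at $\M \in MM1$ the left side is $(\kappa_1(\M))(\{*\}) = \int \Ev_{\{*\}}\;d\M = \int h\;d\M$ by the definition of $\kappa_1$, while the right side is $\int \id_\RR\;d\,Mh(\M) = \int \id_\RR \circ h\;d\M = \int h\;d\M$ by Proposition \ref{thm:integ_of_composite}. Being an isomorphism in $\BornMeas$ as well as an $\MM$-homomorphism, $h$ has an inverse which is automatically an $\MM$-homomorphism (from $k\circ b = k\circ b\circ Mh\circ Mk = k\circ h\circ a\circ Mk = a\circ Mk$), so $h$ is an isomorphism of $\MM$-algebras and $(\RR,c_\RR)\cong(M1,\kappa_1)$, the free $\MM$-algebra on $1$. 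I do not anticipate a genuine obstacle: the whole argument is an assembly of Propositions \ref{thm:integ_wrt_delta_is_evaln}, \ref{thm:integ_of_composite}, and \ref{thm:lifts_of_real_fns}, and the only points demanding a little care are the associativity computation and the bookkeeping that identifies the bornology and sigma-algebra of $M1$ with those of $\RR$.
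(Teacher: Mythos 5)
Your proposal is correct, and its second half (establishing that $\Ev_{\{*\}}:M1\rightarrow\RR$ is a $\BornMeas$-isomorphism and that the square $\Ev_1\circ\kappa_1=c_\RR\circ M\Ev_1$ commutes via Proposition \ref{thm:integ_of_composite}) is exactly the paper's argument. The difference is in the first half: you verify the unit and associativity laws for $(\RR,c_\RR)$ directly, via \ref{thm:integ_wrt_delta_is_evaln} for the unit law and \ref{thm:lifts_of_real_fns}(2) together with \ref{thm:integ_of_composite} for associativity. The paper omits this entirely, because it is redundant: once one knows that $\Ev_1$ is an isomorphism in $\BornMeas$ and that it intertwines $\kappa_1$ with $c_\RR$, the algebra axioms for $c_\RR$ follow automatically by transport of structure from the free algebra $(M1,\kappa_1)$, which satisfies them by the monad laws. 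So the paper's route is shorter, while yours is self-contained and exhibits explicitly how the axioms for $c_\RR$ reduce to the standard integration lemmas --- a computation that also serves as a warm-up for the Pettis-integral argument in Theorem \ref{thm:enough_pints_implies_Malg}. One trivial slip: since $\lt$ denotes $\leqslant$ throughout, the image of the basic bounded set $M(\{*\},1,\gamma)$ under $\Ev_{\{*\}}$ is the closed interval $[-\gamma,\gamma]$ rather than the open one; this does not affect the identification of the bornologies.
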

\begin{proof}
One readily checks that the map $\Ev_1:M1 \rightarrow \RR$ is an isomorphism in $\BornMeas$, with inverse given by $\alpha \mapsto \alpha\delta_*$.  Hence it suffices to show that
$$
\xymatrix{
MM1 \ar[r]^{M\Ev_1} \ar[d]^{\kappa_1} & M\RR \ar[d]^{c_\RR}\\
M1 \ar[r]^{\Ev_1}                     & \RR      
}
$$
commutes, and indeed, for each $\M \in MM1$ we have by \ref{thm:integ_of_composite} that $\Ev_1 \circ \kappa_1(\M) = \int \Ev_1 \;d\M = \int \id_\RR \;d\,M\Ev_1(\M) = c_\RR \circ M\Ev_1(\M)$.
\end{proof}

\begin{RemSub}
For any $\BornMeas$-morphism $f:T \rightarrow \RR$ and any $\mu \in MT$, the integral of $f$ with respect to $\mu$ may be expressed in terms of the structure map $c_\RR$ on the $\MM$-algebra $\RR$ as
$$\int f\;d\mu = \int \id_{\RR}\;d\,Mf(\mu) = c_\RR \circ Mf(\mu)\;,$$
using Proposition \ref{thm:integ_of_composite}.  This motivates the following definition:
\end{RemSub}

\begin{DefSub} \label{def:integ}
For an $\MM$-algebra $(X,c)$, a $\BornMeas$-morphism $f:T \rightarrow X$, and any $\mu \in MT$, we define \textit{the integral of $f$ with respect to $\mu$} to be
$$\int f \;d \mu := \int_{t \in T}f(t)\;d\mu := c \circ Mf(\mu)\;.$$
\end{DefSub}

\begin{RemSub}
Let $(X,c)$ be an $\MM$-algebra.  Then, regarding $(X,c)$ as an algebra of the algebraic theory (over $\BornMeas$) associated to $\MM$, we have for each $T \in \BornMeas$ and each $\mu \in MT$ an \textit{operation} $\Omega_\mu^T$ on $X$ of \textit{arity} $T$ associated to $\mu$, namely the function
$$\Omega_\mu^T:\BornMeas(T,X) \rightarrow X\;,\;\;\;\;f \mapsto c \circ Mf(\mu)\;,$$
and in view of Definition \ref{def:integ}, this is exactly the operation of $X$-valued \textit{integration} with respect to $\mu$, given by 
$$\Omega_\mu^T(f) = \int f \;d\mu\;.$$
\end{RemSub}

\begin{PropSub} \label{thm:homom}
Let $X$ and $Y$ be $\MM$-algebras and $\phi:X \rightarrow Y$ a $\BornMeas$-morphism.  Then $\phi$ is an $\MM$-homomorphism iff
$$\phi\left(\int f\;d\mu\right) = \int \phi \circ f \;d\mu\;,\;\;\;\;\;\;\text{i.e.},\;\;\phi\left(\int_{t \in T} f(t)\;d\mu\right) = \int_{t \in T} \phi(f(t)) \;d\mu\;,$$
for all $f:T \rightarrow X$ in $\BornMeas$ and $\mu \in MT$.
\end{PropSub}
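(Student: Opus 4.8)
The plan is to unwind the definition of an $\MM$-homomorphism and show it is equivalent to the displayed integral-preservation condition, using the formula $\int f\;d\mu = c \circ Mf(\mu)$ from Definition \ref{def:integ}. Recall that $\phi:(X,c_X) \rightarrow (Y,c_Y)$ is an $\MM$-homomorphism precisely when the square $c_Y \circ M\phi = \phi \circ c_X$ commutes as maps $MX \rightarrow Y$. So the proof splits into two implications.

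For the forward direction, suppose $\phi$ is an $\MM$-homomorphism. Given $f:T \rightarrow X$ and $\mu \in MT$, I would compute directly: $\phi\left(\int f\;d\mu\right) = \phi(c_X \circ Mf(\mu)) = c_Y \circ M\phi \circ Mf(\mu) = c_Y \circ M(\phi \circ f)(\mu) = \int \phi \circ f\;d\mu$, using the homomorphism square in the second step and functoriality of $M$ (i.e. $M\phi \circ Mf = M(\phi \circ f)$) in the third. This is a short chain of equalities with no obstacle.

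For the converse, suppose the integral-preservation identity holds for all $T$, all $f:T \rightarrow X$, and all $\mu \in MT$. The key is to specialize to $T = X$ and $f = \id_X$. Then for every $\mu \in MX$ we get $\phi(c_X(\mu)) = \phi\left(\int \id_X\;d\mu\right) = \int \phi\;d\mu = c_Y \circ M\phi(\mu)$, which is exactly the statement that $\phi \circ c_X = c_Y \circ M\phi$, i.e. that $\phi$ is an $\MM$-homomorphism. Here I must only note that $Mf = M\id_X = \id_{MX}$ so $\int \id_X\;d\mu = c_X \circ \id_{MX}(\mu) = c_X(\mu)$, and that $\int \phi\;d\mu = c_Y \circ M\phi(\mu)$ by Definition \ref{def:integ} applied with the morphism $\phi:X \rightarrow Y$ and the measure $\mu \in MX$.

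There is no real obstacle here; the statement is essentially a restatement of the naturality-square definition of a morphism of algebras in the integral notation, and the only subtlety worth flagging is that the universal quantifier over all arities $T$ in the hypothesis is needed only in the trivial instance $T = X$, $f = \id_X$, while the forward direction conversely yields the identity for all $T$. I would present both directions in a few lines, invoking Definition \ref{def:integ} and the functoriality of $M$ as the only ingredients.
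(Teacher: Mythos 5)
Your proof is correct and is precisely the straightforward verification the paper alludes to (the paper's proof is just the remark ``The verification is straightforward''): unwinding Definition \ref{def:integ}, using functoriality of $M$ for the forward direction, and specializing to $T=X$, $f=\id_X$ for the converse. Nothing to add.
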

\begin{proof}
The verification is straightforward.
\end{proof}

\begin{ExaSub} \label{exa:EvE_homom}
It is immediate from the definition of the structure map $\kappa_X$ of a free $\MM$-algebra $MX$ that the evaluation maps $\Ev_E:MX \rightarrow \RR$ are $\MM$-homomorphisms.
\end{ExaSub}

\begin{ExaSub}
Since $\BornMeas$ is complete (\ref{thm:bornmeas_topological}), $\BornMeas^{\MM}$ is complete.  Hence for any set $n$ there is a product $\RR^n$ in the category of $\MM$-algebras, and the underlying $\BornMeas$-object $\RR^n$ is simply the product in $\BornMeas$.  Since the projections $\pi_i:\RR^n \rightarrow \RR$ ($i \in n$) are $\MM$-homomorphisms, $\RR^n$ carries the \textit{coordinatewise integral}, given by
$\pi_i\left(\int f \;d\mu\right) = \int \pi_i \circ f \;d\mu$
for all $f:T \rightarrow \RR^n$ in $\BornMeas$, $\mu \in MT$, and $i \in n$.
\end{ExaSub}

\begin{RemSub} \label{rem:sharp}
For an $\MM$-algebra $(X,c)$ and any $f:T \rightarrow X$ in $\BornMeas$, the composite $MT \xrightarrow{Mf} MX \xrightarrow{c} X$, $\mu \mapsto \int f \;d\mu$, is the $\MM$-homomorphism induced by $f$, which we denote by $f^\sharp$ and call the \textit{lift} of $f$.  We refer the reader to \cite{MW}, Theorem 8.2, for some important properties of the lift combinator $(-)^\sharp$ for a general monad.
\end{RemSub}

\begin{LemSub}
For an $\MM$-algebra $(X,c)$ and an object $T \in \BornMeas$, $\BornMeas(T,X)$ is a real vector space under the pointwise operations.
\end{LemSub}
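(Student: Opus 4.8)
The plan is to realize $\BornMeas(T,X)$ as a linear subspace of the real vector space $\Set(T,X)$ of all functions $T \to X$ under the pointwise operations. This pointwise structure makes sense precisely because $X$ carries the real vector space structure associated to its $\MM$-algebra structure by \ref{thm:Malg_vect_sp}. Once $\BornMeas(T,X)$ is exhibited as a subset of $\Set(T,X)$ that contains the zero function and is closed under pointwise addition and pointwise scalar multiplication, it follows that it is a linear subspace, and hence a real vector space in its own right.

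First I would invoke Proposition \ref{thm:Malg_mbvs}, which tells us that $X$, equipped with its associated vector space structure, is a measurable bornological vector space; thus the addition map $+ : X \times X \to X$ and the scalar multiplication map $\cdot : \RR \times X \to X$ are $\BornMeas$-morphisms, where the products are formed in $\BornMeas$. Since $\BornMeas$ is complete (\ref{thm:bornmeas_topological}), for any $\BornMeas$-morphisms $f,g : T \to X$ the induced map $\langle f,g\rangle : T \to X \times X$ is again a $\BornMeas$-morphism, so the pointwise sum, being the composite $T \xrightarrow{\langle f,g\rangle} X \times X \xrightarrow{+} X$, lies in $\BornMeas(T,X)$. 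Likewise, for a scalar $a \in \RR$ the constant map $\bar a : T \to \RR$ is measurable and bornological, so for any $\BornMeas$-morphism $f : T \to X$ the pointwise scalar multiple $a f$, being the composite $T \xrightarrow{\langle \bar a, f\rangle} \RR \times X \xrightarrow{\cdot} X$, lies in $\BornMeas(T,X)$; and the zero function $T \to X$ is constant, hence also a $\BornMeas$-morphism.

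Therefore $\BornMeas(T,X)$ is closed under the pointwise vector space operations inherited from $\Set(T,X)$ and contains the additive identity, so it is a real vector subspace of $\Set(T,X)$. The only point requiring care is the identification of the pointwise operations with the composites through the products $X \times X$ and $\RR \times X$ taken in $\BornMeas$; but the underlying sets of these products are the ordinary cartesian products and the addition and scalar multiplication maps of $X$ have the expected underlying functions, so this identification is immediate upon evaluating at a point $t \in T$. I expect no substantive obstacle beyond this routine bookkeeping.
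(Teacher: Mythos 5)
Your proposal is correct and follows essentially the same route as the paper: both express the pointwise sum and scalar multiple as composites $T \to X \times X \xrightarrow{+} X$ and $T \to \RR \times X \xrightarrow{\cdot} X$ through products in $\BornMeas$, invoking Proposition \ref{thm:Malg_mbvs} to see that $+$ and $\cdot$ are $\BornMeas$-morphisms. The additional framing of $\BornMeas(T,X)$ as a linear subspace of $\Set(T,X)$ is a harmless elaboration of what the paper leaves implicit.
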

\begin{proof}
For $f,g \in \BornMeas(T,X)$ and $a \in \RR$, the pointwise sum $f+g$ and scalar multiple $af$ are the composites
$$T \xrightarrow{(f,g)} X \times X \xrightarrow{+} X \;\;\;\text{and}\;\;\;T \xrightarrow{(a,f)} \RR \times X \xrightarrow{\cdot} X\;,$$
which are $\BornMeas$-morphisms since the the addition and scalar multiplication maps $+,\cdot$ are $\BornMeas$-morphisms, by \ref{thm:Malg_mbvs}.
\end{proof}

\begin{ThmSub}
For an $\MM$-algebra $(X,c)$, an object $T \in \BornMeas$, and any $\mu \in MT$, the operation
$$\Omega_\mu^T:\BornMeas(T,X) \rightarrow X\;,\;\;\;\;f \mapsto \int f\;d\mu = c \circ Mf(\mu)$$
is a linear map.
\end{ThmSub}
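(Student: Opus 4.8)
The plan is to verify directly that $\Omega_\mu^T$ is additive and homogeneous. The only inputs needed are the explicit description of the vector space structure on $X$ from Corollary~\ref{thm:Malg_vect_sp}, the associativity law $c \circ Mc = c \circ \kappa_X$ for the $\MM$-algebra $(X,c)$, the linearity of $c$ (Corollary~\ref{thm:struct_map_linear}), the change-of-variables formula \ref{thm:integ_of_composite}, and the elementary additivity and homogeneity of the ordinary real-valued integral. (Contrary to some earlier arguments, no convergence theorems are required.)

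For additivity, take $f,g \in \BornMeas(T,X)$. By the preceding Lemma the pointwise sum $f+g$ is the composite $T \xrightarrow{(f,g)} X \times X \xrightarrow{+} X$, and by \ref{thm:Malg_vect_sp} the addition map of $X$ sends $(x,y)$ to $c(\delta_x + \delta_y)$; hence $f+g = c \circ \gamma$, where $\gamma \colon T \to MX$ is the $\BornMeas$-morphism $t \mapsto \delta_{f(t)} + \delta_{g(t)}$ (equivalently, $\gamma$ is the pointwise sum $(\delta_X \circ f) + (\delta_X \circ g)$ formed in the vector space $MX$). Then $M(f+g) = Mc \circ M\gamma$, and applying the associativity law $c \circ Mc = c \circ \kappa_X$ gives
$$\int (f+g)\;d\mu \;=\; c\bigl(M(f+g)(\mu)\bigr) \;=\; c\bigl(Mc(M\gamma(\mu))\bigr) \;=\; c\bigl(\kappa_X(M\gamma(\mu))\bigr)\;.$$
The key step is then to identify the measure $\kappa_X(M\gamma(\mu)) \in MX$ with $Mf(\mu) + Mg(\mu)$. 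To this end I would evaluate it on an arbitrary measurable $E \subs X$: since $\Ev_E \colon MX \to \RR$ is a $\BornMeas$-morphism (\ref{thm:EvE_bornmeas_morphism}), the composite $T \xrightarrow{\gamma} MX \xrightarrow{\Ev_E} \RR$ is a $\BornMeas$-morphism and hence $\mu$-integrable (\ref{thm:morphisms_integrable}), so by the definition of $\kappa_X$ and Proposition~\ref{thm:integ_of_composite},
$$\bigl(\kappa_X(M\gamma(\mu))\bigr)(E) \;=\; \int \Ev_E \;d\,M\gamma(\mu) \;=\; \int \Ev_E \circ \gamma \;d\mu\;.$$
But $\Ev_E \circ \gamma$ is the real-valued function $t \mapsto \delta_{f(t)}(E) + \delta_{g(t)}(E) = [f^{-1}(E)](t) + [g^{-1}(E)](t)$, so by additivity of the real integral this equals $\mu(f^{-1}(E)) + \mu(g^{-1}(E)) = (Mf(\mu))(E) + (Mg(\mu))(E)$. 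Thus $\kappa_X(M\gamma(\mu)) = Mf(\mu) + Mg(\mu)$, and combining this with the display above and the linearity of $c$ yields $\int (f+g)\,d\mu = c(Mf(\mu)) + c(Mg(\mu)) = \int f\,d\mu + \int g\,d\mu$.

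Homogeneity is handled identically: for $a \in \RR$ one has $af = c \circ \gamma_a$ with $\gamma_a \colon t \mapsto a\,\delta_{f(t)}$, the same computation (now using $\int a h\,d\mu = a\int h\,d\mu$ for the real integral) gives $\kappa_X(M\gamma_a(\mu)) = a\cdot Mf(\mu)$, and linearity of $c$ then gives $\int (af)\,d\mu = a\int f\,d\mu$. I expect the only delicate point to be the middle bookkeeping, namely correctly rewriting $f+g$ as $c\circ\gamma$ through the explicit formula for the algebra's addition and then stripping off the structure map via the associativity law; once that is done everything collapses to routine facts about real integration together with the already-established linearity of $c$.
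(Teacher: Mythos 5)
Your argument is correct, and all the ingredients you invoke are available at this point in the paper: $\gamma = (\delta_X\circ f)+(\delta_X\circ g)$ is a $\BornMeas$-morphism because $MX$ is a measurable bornological vector space (\ref{thm:MX_mbvs}), the identification $f+g = c\circ\gamma$ is exactly the explicit formula for addition in \ref{thm:Malg_vect_sp}, and the evaluation $\kappa_X(M\gamma(\mu))(E) = \int \Ev_E\circ\gamma\;d\mu = \mu(f^{-1}(E)) + \mu(g^{-1}(E))$ is legitimate since $\Ev_E\circ\gamma$ is a $\mu$-integrable sum of two characteristic functions. The paper proves the same theorem by a more formal bootstrap: it restates the claim as $(af+bg)^\sharp = af^\sharp + bg^\sharp$, notes this holds for the algebra $\RR$, transfers it to free algebras $MX$ using that the maps $\Ev_E$ are linear $\MM$-homomorphisms which jointly separate points of $MX$, and finally transfers it to a general algebra via the linearity and homomorphism property of $c$. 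Your computation of $\kappa_X(M\gamma(\mu))$ on measurable sets is in substance the free-algebra step of that argument (note $(\delta_X\circ f)^\sharp = Mf$), but carried out concretely: because your $T\to MX$ maps factor through $\delta_X$, the integrands $\Ev_E\circ\gamma$ are simple, so you need only finite additivity of the real integral rather than the paper's appeal to the lift-combinator identities and the full linearity of $(-)^\sharp$ on $\BornMeas(X,\RR)$. What the paper's route buys is uniformity (the same two-line homomorphism argument handles arbitrary $h,k:T\to MX$, not just Dirac-valued ones); what yours buys is that the key identity is verified by a transparent measure-theoretic calculation with no detour through the algebra $\RR$. Your parenthetical that no convergence theorems are needed is accurate for your version, precisely because the only integrands that appear are simple functions.
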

\begin{proof}
The needed linearity of integration is equivalent to the requirement that for all $f,g \in \BornMeas(T,X)$ and all $a,b \in \RR$, $c \circ M(af + bg) = ac \circ Mf + bc \circ Mg$, i.e. $(af+bg)^\sharp = af^\sharp + bg^\sharp$.

We know that the $\MM$-algebra $\RR$ has this property.  It follows that the free algebra $MX$ does as well, as follows.  Let $h,k \in \BornMeas(T,MX)$, $a,b \in \RR$, and $E \subs X$ measurable.  Since $\Ev_E$ is an $\MM$-homomorphism (\ref{exa:EvE_homom}) and a linear map, we find, using properties of the lift combinator (\ref{rem:sharp}), that 
$$\Ev_E \circ (ah + bk)^\sharp = (\Ev_E \circ (ah + bk))^\sharp = (a\Ev_E \circ h + b\Ev_E \circ k)^\sharp$$
$$= a(\Ev_E \circ h)^\sharp + b(\Ev_E \circ k)^\sharp = a\Ev_E \circ h^\sharp + b\Ev_E \circ k^\sharp = \Ev_E \circ (ah^\sharp + bk^\sharp)\;.$$

Hence, given $f,g \in \BornMeas(T,X)$, $a,b \in \RR$, taking $h := \delta_X \circ f$ and $k := \delta_X \circ g$ we have that
$$(a\delta_X \circ f + b\delta_X \circ g)^\sharp = a(\delta_X \circ f)^\sharp + b(\delta_X \circ g)^\sharp = aMf + bMg\;,$$
so we may compute, using the fact that $c$ is an $\MM$-homomorphism and a linear map, that
$$af^\sharp + bg^\sharp = ac \circ Mf + bc \circ Mg = c \circ (aMf + bMg) = c \circ (a\delta_X \circ f + b\delta_X \circ g)^\sharp$$
$$= (c \circ (a\delta_X \circ f + b\delta_X \circ g))^\sharp = (ac \circ \delta_X \circ f + bc \circ \delta_X \circ g)^\sharp = (af + bg)^\sharp\;.$$
\end{proof}

\section{Pettis integrals and $\MM$-algebras}

\begin{DefSub}
Let $X$ be a (real) Banach space.  Let $X^* := \RVect(\Born)(X,\RR)$ be the vector space of all bounded linear functionals on $X$.  The \textit{weak sigma-algebra} on $X$ is the initial sigma-algebra induced by the family of all bounded linear functionals $\varphi:X \rightarrow \RR$.  Given a measurable space $T$, we say that a function $f:T \rightarrow X$ is \textit{weakly measurable} if it is measurable with respect to the weak sigma-algebra on $X$, equivalently, if the composite $T \xrightarrow{f} X \xrightarrow{\varphi} \RR$ is measurable for all $\varphi \in X^*$.
\end{DefSub}

\begin{DefSub} \label{def:pint}
Let $X$ be a Banach space, $f:T \rightarrow X$ a weakly measurable function, and $\mu \in \SM T$ a finite signed measure on $T$.  We say that a vector $x \in X$ is a \textit{Pettis integral} of $f$ with respect to $\mu$ if
$$\forall \varphi \in X^* \;:\; \text{$\varphi \circ f$ is $\mu$-integrable, and $\varphi(x) = \int \varphi \circ f \;d\mu$}\;.$$
If such a Pettis integral exists, then, since the space of functionals $X^*$ separates points, this Pettis integral must be unique, and we denote it by $\pint f\;d\mu$ or $\pint_{t \in T} f(t)\;d\mu$.
\end{DefSub}

\begin{RemSub}
The defining property of the Pettis integral $\pint f \;d\mu$ in \ref{def:pint} requires exactly that
$$\forall \varphi \in X^* \;:\; \varphi\left(\pint_{t \in T} f(t)\;d\mu\right) = \int_{t \in T} \varphi(f(t))\;d\mu\;.$$
Compare this with the characterization of an $\MM$-homomorphism given in \ref{thm:homom}.
\end{RemSub}

\begin{DefSub}
We say that a Banach space $X$ \textit{has enough Pettis integrals} if for all bounded weakly measurable functions $f:T \rightarrow X$ and all finite signed measures $\mu \in \SM T$, there is a Pettis integral $\pint f\;d\mu$ in $X$.  Here, by \textit{bounded} we mean that $f$ has bounded image.
\end{DefSub}

\begin{PropSub}
For a Banach space $X$, the following are equivalent
\begin{enumerate}
\item $X$ has enough Pettis integrals;
\item $X$ has a Pettis integral $\pint f\;d\mu$ for each bounded weakly measurable function $f:T \rightarrow X$ and each nonnegative $\mu \in \SM T$.
\item $X$ has a Pettis integral $\pint \iota_{BX}\;d\mu$ of the inclusion $\iota_{BX}:B \hookrightarrow X$ for each bounded subset $B \subs X$ and each $\mu \in \SM B$.
\item $X$ has a Pettis integral $\pint \id_X \;d\mu$ of the identity map $\id_X:X \rightarrow X$ with respect to each finite signed measure of bounded support $\mu \in MX$.
\end{enumerate}
\end{PropSub}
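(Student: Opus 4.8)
The plan is to prove the chain of implications $(1) \Rightarrow (2) \Rightarrow (3) \Rightarrow (4) \Rightarrow (1)$, with the bulk of the work being the unwinding of definitions and a decomposition argument for $(2) \Rightarrow (3)$. The implications $(1) \Rightarrow (2)$ and $(3) \Rightarrow (4)$ are essentially immediate: $(1) \Rightarrow (2)$ is a weakening of the hypothesis (restricting to nonnegative measures), and $(3) \Rightarrow (4)$ follows because any $\mu \in MX$ is supported by some bounded $B \subs X$, so $\mu = \SM\iota_{BX}(\mu_B)$ with $\mu_B \in \SM B$, and a Pettis integral $\pint \iota_{BX}\;d\mu_B$ is then easily seen (testing against each $\varphi \in X^*$ and using \ref{thm:integ_of_composite}) to be a Pettis integral $\pint \id_X \;d\mu$.

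For $(2) \Rightarrow (3)$, I would note that $\iota_{BX}:B \hookrightarrow X$ is bounded (its image is $B$) and weakly measurable: for each $\varphi \in X^*$, $\varphi \circ \iota_{BX}$ is bounded, and it is measurable because $B$ carries the initial sigma-algebra from $X$ and $\varphi$ is weakly measurable on $X$. Given $\mu \in \SM B$, write $\mu = \mu^+ - \mu^-$ via its Jordan decomposition; applying hypothesis (2) to the two nonnegative measures $\mu^+$ and $\mu^-$ (regarded via $\SM\iota_{BX}$ as nonnegative measures on $X$ supported by $B$, with the bounded weakly-measurable function $\iota_{BX}$ composed appropriately), I obtain Pettis integrals $x^+ := \pint \iota_{BX}\;d\mu^+$ and $x^- := \pint \iota_{BX}\;d\mu^-$, and then $x^+ - x^-$ is the desired Pettis integral, since for each $\varphi \in X^*$ the scalar integral is additive: $\int \varphi\circ\iota_{BX}\;d\mu = \int\varphi\circ\iota_{BX}\;d\mu^+ - \int\varphi\circ\iota_{BX}\;d\mu^-$. (One must be mildly careful that ``weakly measurable function on $T$'' in (2) is to be read with $T = B$; this is harmless.)

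The implication $(4) \Rightarrow (1)$ is the one requiring real work, and I expect it to be the main obstacle. Given a bounded weakly measurable $f:T \rightarrow X$ and $\mu \in \SM T$, I want to produce $\pint f\;d\mu$. The idea is to push $\mu$ forward along $f$ to get a measure $\nu$ on $X$ of bounded support, and then apply (4). However, $f$ need not be measurable into the \emph{weak} sigma-algebra's worth of structure needed to form a direct image in $MX$ — one must check that $f:T \rightarrow X_w$ is a $\BornMeas$-morphism where $X_w$ denotes $X$ equipped with the weak sigma-algebra and the norm bornology, so that $Mf(\SM f(\mu)) = \SM f(\mu)$ lands in $MX_w$; boundedness of $f$ gives bounded support, and weak measurability gives measurability, so $\nu := \SM f(\mu) \in M(f(T)^{-}, X_w)$ where $f(T)$ is contained in a bounded set. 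Then (4) — applied with the weak sigma-algebra, noting that the hypothesis in (4) only ever tests against $\varphi \in X^*$ and so is insensitive to whether we use the norm or weak sigma-algebra — yields $x := \pint \id_X\;d\nu$. Finally, for each $\varphi \in X^*$, Proposition \ref{thm:integ_of_composite} gives $\int \varphi\circ f\;d\mu = \int \varphi\;d\,\SM f(\mu) = \int\varphi\;d\nu = \varphi(x)$, so $x = \pint f\;d\mu$. The delicate points to get right are: (a) confirming that $f$ is a morphism into $X$ with the weak sigma-algebra (this is where ``weakly measurable'' is exactly what is needed) and that its image lies in a bounded set; (b) reconciling the sigma-algebra used in statement (4) — presumably the norm-Borel or some fixed choice — with the weak one, which works because Pettis integrability is defined purely via the functionals $\varphi \in X^*$; and (c) checking that direct images of finite signed measures along $f$ genuinely have bounded support in the bornological sense, which follows from \ref{thm:basis_born_MX} once $f(T)$ is bounded.
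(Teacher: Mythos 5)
Your proposal is correct and uses essentially the same ingredients as the paper's proof: the Jordan decomposition to pass from nonnegative to signed measures, restriction of a measure to its bounded support (via \ref{thm:integ_of_restn}) for the step into $(4)$, and the pushforward $\SM f(\mu)$ along a bounded weakly measurable $f$ (with $X$ carrying the weak sigma-algebra and the all-subsets bornology on $T$) together with \ref{thm:integ_of_composite} for $(4)\Rightarrow(1)$. The only difference is organizational: the paper proves $(1)\Leftrightarrow(2)$ directly and separately runs the cycle $(1)\Rightarrow(3)\Rightarrow(4)\Rightarrow(1)$, whereas you fold the Jordan-decomposition step into $(2)\Rightarrow(3)$ so as to obtain the single cycle $(1)\Rightarrow(2)\Rightarrow(3)\Rightarrow(4)\Rightarrow(1)$.
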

\begin{proof}
The implications $(1)\Rightarrow(2)$ and $(1)\Rightarrow(3)$ are clear.  Also, $(2)\Rightarrow(1)$ since for any $\mu \in \SM T$, if there exist Pettis integrals $\pint f\;d\mu^+$ and $\pint f\;d\mu^-$, then $\pint f\;d\mu^+ - \pint f\;d\mu^-$ is a Pettis integral of $f$ with respect to $\mu$.  Next, $(3)\Rightarrow(4)$, since for any $\mu \in MX$, $\mu$ is supported by some bounded $B \subs X$ and hence for each $\varphi \in X^*$ we have that
$$\varphi\left(\pint \iota_{BX} d\mu_B\right) = \int \varphi \circ \iota_{BX}\;d\mu_B = \int \varphi \;d\mu$$
by \ref{thm:integ_of_restn}, so that $\pint \iota_{BX} d\mu_B$ is a Pettis integral of $\id_X$ with respect to $\mu$.  Lastly, suppose $(4)$.  Endow $X$ with the weak sigma-algebra, let $f:T \rightarrow X$ be a bounded weakly measurable function, and let $\mu \in \SM T$.  Endowing $T$ with the bornology consisting of all subsets of $T$, we have that $T$ is a bounded $\BornMeas$-object, $f:T \rightarrow X$ is a $\BornMeas$-morphism, and $\mu \in \SM T = MT$.  Hence we have $Mf(\mu) \in MX$, so there is a Pettis integral $\pint \id_X \;d\,Mf(\mu)$ in $X$, and this serves as a Pettis integral $\pint f\;d\mu$ since for each $\varphi \in X^*$ we have that
$$\varphi\left(\pint \id_X \;d\,Mf(\mu)\right) = \int \varphi\;d\,Mf(\mu) = \int \varphi \circ f\;d\mu$$
by Proposition \ref{thm:integ_of_composite}.
\end{proof}

\begin{ThmSub} \label{thm:enough_pints_implies_Malg}
Let $X$ be a Banach space with enough Pettis integrals.  Then $X$ is an $\MM$-algebra when we endow $X$ with the weak sigma-algebra, the norm bornology, and the structure map $c:MX \rightarrow X$ sending each $\mu \in MX$ to the Pettis integral $\pint \id_X \;d\mu$.
\end{ThmSub}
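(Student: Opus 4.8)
The plan is to check the three things that make $(X,c)$ an Eilenberg--Moore algebra for $\MM$: that it is a genuine pair in $\BornMeas$, that $c:MX \to X$ is a $\BornMeas$-morphism, and that $c$ satisfies the unit law $c \circ \delta_X = \id_X$ and the associativity law $c \circ \kappa_X = c \circ Mc$ on $MMX$. The set $X$ equipped with the weak sigma-algebra and the norm bornology is trivially an object of $\BornMeas$, and the map $c$ is well-defined because the preceding Proposition shows that $X$ having enough Pettis integrals is equivalent to its condition (4), which is precisely the assertion that $\pint \id_X\,d\mu$ exists for every $\mu \in MX$.

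The key observation, which I would establish first, is that every bounded linear functional $\varphi \in X^*$ is a $\BornMeas$-morphism $\varphi:X \to \RR$ — it is measurable because $X$ carries the weak sigma-algebra, which is by definition the initial one induced by $X^*$, and it is bornological because it is a bounded linear map, sending norm-bounded sets to bounded subsets of $\RR$ — and that, by the very defining property of the Pettis integral,
$$\varphi \circ c(\mu) \;=\; \varphi\left(\pint \id_X\,d\mu\right) \;=\; \int \varphi\,d\mu \;=\; \varphi^\sharp(\mu)$$
for all $\mu \in MX$, where $\varphi^\sharp:MX \to \RR$ is the $\BornMeas$-morphism of Proposition \ref{thm:lifts_of_real_fns}. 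Thus $\varphi \circ c = \varphi^\sharp$ for every $\varphi \in X^*$. Since $X$ carries the initial sigma-algebra induced by the functionals $\varphi$, this at once gives that $c$ is measurable. For boundedness I would argue directly: if $B \subs X$ lies in the closed ball of radius $r$ and $\mu \in M(B,X,\gamma)$, then $|\varphi| \lt r$ on $B$ whenever $\|\varphi\| \lt 1$, so, exactly as in the Claim in the proof of \ref{thm:lifts_of_real_fns} (using Propositions \ref{thm:integ_of_restn} and \ref{thm:total_var_of_restn}), $|\varphi(c(\mu))| = \left|\int \varphi\,d\mu\right| \lt r\|\mu\| \lt r\gamma$; taking the supremum over such $\varphi$ gives $\|c(\mu)\| \lt r\gamma$, so $c(M(B,X,\gamma))$ is norm-bounded. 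Hence $c$ is a $\BornMeas$-morphism.

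For the unit law, for each $x \in X$ and each $\varphi \in X^*$ we have $\varphi(c(\delta_x)) = \int \varphi\,d\delta_x = \varphi(x)$ by Proposition \ref{thm:integ_wrt_delta_is_evaln}, and since $X^*$ separates points this forces $c \circ \delta_X = \id_X$. For the associativity law I would again compose with an arbitrary $\varphi \in X^*$ and invoke separation of points. On one side, $\varphi \circ c \circ \kappa_X = \varphi^\sharp \circ \kappa_X$ sends $\M \in MMX$ to $\int \varphi\,d\,\kappa_X(\M)$; on the other, $\varphi \circ c \circ Mc$ sends $\M$ to $\varphi^\sharp(Mc(\M)) = \int \varphi\,d\,Mc(\M) = \int (\varphi \circ c)\,d\M = \int \varphi^\sharp\,d\M$, using Proposition \ref{thm:integ_of_composite} together with the key observation. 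These two quantities coincide by Proposition \ref{thm:lifts_of_real_fns}(2) applied to the $\BornMeas$-morphism $\varphi$, so $c \circ \kappa_X = c \circ Mc$.

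I expect the only genuine work to lie in showing that $c$ is a $\BornMeas$-morphism: one must check carefully that the functionals in $X^*$ are bona fide $\BornMeas$-morphisms for the weak sigma-algebra and norm bornology, so that the machinery of Proposition \ref{thm:lifts_of_real_fns} becomes available, and one must run the total-variation norm estimate establishing boundedness of $c$. Once the identity $\varphi \circ c = \varphi^\sharp$ is secured, both algebra laws reduce, via the separation of points by $X^*$, to statements about real-valued integration that have already been proved, so the remaining verifications are routine.
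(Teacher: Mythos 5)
Your proposal is correct and follows the same overall strategy as the paper: reduce every verification to composition with functionals $\varphi \in X^*$ via the identity $\varphi \circ c = \varphi^\sharp$, then invoke separation of points for the unit and associativity laws. The one step where you genuinely diverge is the proof that $c$ is bornological. The paper encloses $\gamma B$ in a closed ball $B_r$, observes that $B_r$ is absolutely convex and weakly closed, and applies the Bipolar Theorem to conclude $B_r = B_r^{\circ\circ}$, then checks $|\varphi(c(\mu))| \lt 1$ for all $\varphi \in B_r^{\circ}$. You instead run the same total-variation estimate against all $\varphi$ with $\|\varphi\| \lt 1$ and conclude via the duality formula $\|c(\mu)\| = \sup_{\|\varphi\| \lt 1} |\varphi(c(\mu))| \lt r\gamma$. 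Both arguments ultimately rest on the Hahn--Banach theorem, but yours is the more elementary packaging (a single standard norming-functional identity, no mention of the weak topology or polars), while the paper's bipolar formulation makes explicit that the relevant ball is weakly closed, which fits its emphasis on the weak sigma-algebra. Either route is complete; there is no gap in your argument.
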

\begin{proof}
\textbf{(i).}  Firstly $c$ is measurable, since for every $\varphi \in X^*$, $\varphi$ is measurable and bornological, and the composite $MX \xrightarrow{c} X \xrightarrow{\varphi} \RR$
is the measurable map $\varphi^\sharp$ (\ref{thm:lifts_of_real_fns}), since for each $\mu \in MX$ we have $\varphi \circ c(\mu) = \varphi(\pint \id_X \;d\mu) = \int \varphi \;d\mu = \varphi^\sharp(\mu)$.

\paragraph*{(ii).} Next we prove that $c$ is bornological.  Consider any basic bounded subset $M(B,X,\gamma)$ of $MX$, where $B \subs X$ is bounded and $\gamma > 0$.  Then $\gamma B \subs X$ is bounded and hence is contained within some closed ball $B_r$ in $X$ of radius $r > 0$.  We shall show that $c(M(B,X,\gamma)) \subs B_r$.

Both the norm topology and the weak topology are admissible topologies on $X$ with respect to the dual pair $(X,X^*)$ (see, e.g., \cite{H}, \S 98), so since $B_r$ is convex and closed in the norm topology, $B_r$ is also closed in the weak topology (e.g., by \cite{H}, 98.1).  Furthermore, $B_r$ is absolutely convex, so $B_r$ is equal to its absolutely convex weakly-closed hull, which, by the Bipolar Theorem, is equal to the bipolar $B_r^{\circ\circ}$ (see, e.g., \cite{H}, \S 99).  Hence
$$B_r = B_r^{\circ\circ} = \{x_0 \in X \mid \forall \varphi \in B_r^\circ\;:\; |\varphi(x_0)| \lt 1\}$$
where
$$B_r^\circ = \{\varphi \in X^* \mid \forall x \in B_r \;:\; |\varphi(x)| \lt 1\}\;.$$

Now let $\mu \in M(B,X,\gamma)$.  To see that $c(\mu) \in B_r^{\circ\circ} = B_r$, consider any $\varphi \in B_r^\circ$.  Since $\mu$ is supported by $B$, we have by \ref{thm:integ_of_restn} that
$$\varphi(c(\mu)) = \varphi(\pint \id_X \;d\mu) = \int \varphi \;d\mu = \int \varphi \circ \iota_{BX} \;d\mu_B\;,$$
where $\iota_{BX}:B \hookrightarrow X$ is the inclusion.  But we have that $|\varphi| \lt \gamma^{-1}$ on $B$, since for any $x \in B$ we have $\gamma x \in \gamma B \subs B_r$ and hence $\gamma |\varphi(x)| = |\varphi(\gamma x)| \lt 1$, as $\varphi \in B_r^\circ$.  Therefore
$$
\begin{array}{lllllll}
 |\varphi(c(\mu))|& =   & |\int \varphi \circ \iota_{BX} \;d\mu_B| & \lt & \int |\varphi \circ \iota_{BX}| \;d|\mu_B| &   &   \\
                  & \lt & \gamma^{-1} |\mu_B|(B)           & =    & \gamma^{-1} ||\mu_B||                             &   &   \\
                  & =   & \gamma^{-1} ||\mu||              & =    & \gamma^{-1} \gamma                                & = & 1\;,
\end{array}
$$
using Proposition \ref{thm:total_var_of_restn}.

\paragraph*{(iii).}  To see that $(X,c)$ satisfies the unit law $c \circ \delta_X = 1_X$, let $x \in X$.  For each $\varphi \in X^*$,
$$\varphi \circ c \circ \delta_X(x) = \varphi(\pint \id_X \;d\delta_x) = \int \varphi \;d\delta_x = \varphi(x)\;.$$
Hence, since the $\varphi \in X^*$ separate points, $c \circ \delta_X(x) = x$.

\paragraph*{(iv).} In order to prove that $(X,c)$ satisfies the associativity law $c \circ Mc = c \circ \kappa_X$, let $\M \in MMX$.  For each $\varphi \in X^*$, we have that
$$\varphi \circ c \circ \kappa_X(\M) = \varphi\left(\pint \id_X \;d\,\kappa_X(\M)\right) = \int \varphi \;d\,\kappa_X(\M) = \int \varphi^\sharp \;d\M$$
by Proposition \ref{thm:lifts_of_real_fns}, whereas
$$\varphi \circ c \circ Mc(\M) = \varphi\left(\pint \id_X \;d\,Mc(\M)\right) = \int \varphi \;d\,Mc(\M) = \int \varphi \circ c \;d\M$$
by Proposition \ref{thm:integ_of_composite}.  But as noted in (i), $\varphi \circ c = \varphi^\sharp$, so these two real numbers are equal.  Hence, since the $\varphi \in X^*$ separate points, the result is established.
\end{proof}

\begin{CorSub}
Any Banach space $X$ that is separable or reflexive has enough Pettis integrals and hence is an $\MM$-algebra when endowed with the bornology, sigma-algebra, and structure map of Theorem \ref{thm:enough_pints_implies_Malg}.
\end{CorSub}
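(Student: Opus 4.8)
The plan is to establish directly that every separable Banach space and every reflexive Banach space \emph{has enough Pettis integrals}; the assertion that such a space is then an $\MM$-algebra, with the bornology, sigma-algebra and structure map as stated, is exactly the content of Theorem~\ref{thm:enough_pints_implies_Malg}. By the equivalent characterizations of ``enough Pettis integrals'' established just above, it suffices to produce a Pettis integral $\pint f\;d\mu$ for every bounded weakly measurable $f : T \to X$ and every \emph{nonnegative} finite measure $\mu \in \SM T$. For any such $f$ and $\mu$ and any $\varphi \in X^*$, the scalar function $\varphi \circ f$ is bounded (since $f$ has bounded image) and measurable (since $f$ is weakly measurable), hence $\mu$-integrable, and $\varphi \mapsto \int \varphi \circ f\;d\mu$ is a linear functional $\Lambda$ on $X^*$ with $|\Lambda(\varphi)| \lt \|\varphi\|\cdot\bigl(\sup_{t \in T}\|f(t)\|\bigr)\cdot\mu(T)$, so that $\Lambda \in X^{**}$. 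By Definition~\ref{def:pint}, a vector $x \in X$ is a Pettis integral of $f$ with respect to $\mu$ precisely when $\Lambda$ is the image $Jx$ of $x$ under the canonical isometric embedding $J : X \to X^{**}$; so everything reduces to showing $\Lambda \in J(X)$.

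If $X$ is reflexive, $J$ is surjective and there is nothing further to prove. If $X$ is separable, I would instead realise $\Lambda$ concretely. For a separable Banach space the weak sigma-algebra coincides with the norm Borel sigma-algebra (a countable norming subset of $X^*$ cuts out the closed balls, and a separable metric space has a countable base of balls), so $f$ is a Borel map into the separable metric space $X$, and is therefore strongly $\mu$-measurable: one may approximate $f$ pointwise by the finitely-valued $\sigma T$-measurable maps $f_n := \pi_n \circ f$, where $\pi_n : X \to X$ sends $y$ to the nearest of the first $n$ points of a fixed countable dense subset (ties broken by least index), noting that $\|f_n\|$ is then uniformly bounded because $f$ is. As $\mu$ is finite, $f$ is consequently Bochner $\mu$-integrable, and its Bochner integral $b := \int f\;d\mu \in X$ is the norm limit of the elementary integrals $\int f_n\;d\mu$ by dominated convergence. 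For each $\varphi \in X^*$ one then gets $\varphi(b) = \lim_n \varphi\!\left(\int f_n\;d\mu\right) = \lim_n \int \varphi \circ f_n\;d\mu = \int \varphi \circ f\;d\mu = \Lambda(\varphi)$, again by dominated convergence; hence $Jb = \Lambda$, so $\Lambda \in J(X)$ and $b$ is the required Pettis integral $\pint f\;d\mu$.

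The one genuine obstacle is the separable case, namely knowing that the Gelfand functional $\Lambda \in X^{**}$ in fact lies in the image of $X$; the reflexive case is a one-line consequence of surjectivity of $J$, and all the integrability and measurability assertions are routine, using only that $f$ is bounded, that $\mu$ is finite, and, in the separable case, the standard identification of the weak and norm Borel sigma-algebras together with the Pettis measurability theorem (see \cite{DU}). Both cases could equally be run through condition (3) or (4) of the preceding Proposition, integrating only the inclusions $\iota_{BX} : B \hookrightarrow X$ of bounded subsets and invoking \ref{thm:integ_of_restn} and \ref{thm:integ_of_composite}, but this would not shorten the argument.
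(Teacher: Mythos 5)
Your proof is correct, and in the separable case it takes a genuinely more self-contained route than the paper's. Both arguments start identically: reduce to nonnegative finite $\mu$ via condition (2) of the preceding Proposition, and note that the Dunford (Gelfand) functional $\Lambda(\varphi) = \int \varphi \circ f\,d\mu$ lies in $X^{**}$, so that a Pettis integral exists precisely when $\Lambda$ lies in the image of the canonical embedding $J:X\to X^{**}$. For reflexive $X$ your argument and the paper's coincide in substance: the paper phrases it as ``Dunford integrable implies Pettis integrable for reflexive spaces'' with a citation to \cite{DU}, which is exactly your observation that $J$ is surjective. For separable $X$ the paper simply invokes Theorem 1 of \cite{Pl} on scalarly bounded functions, whereas you reprove the needed special case from scratch: the weak sigma-algebra coincides with the norm-Borel sigma-algebra (countable norming set plus a countable base of balls), so $f$ is Borel and separably valued, hence strongly measurable --- your nearest-point approximants $f_n = \pi_n\circ f$ are a correct hands-on instance of the Pettis measurability theorem, and the uniform bound on $\lVert f_n\rVert$ and pointwise convergence are both right --- hence Bochner integrable since $f$ is bounded and $\mu$ finite, and the Bochner integral is a fortiori the Pettis integral. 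What your route buys is independence from the external reference, at the cost of a paragraph of standard measure theory; what the citation buys is brevity and, implicitly, a stronger conclusion (Pettis integrability over every measurable subset in the sense of \cite{Pl}, not merely existence of the single integral vector, though only the latter is needed here). I see no gaps.
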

\begin{proof}
\textbf{(i).}  It is well-known that any separable Banach space $X$ has enough Pettis integrals.  For example, given a bounded weakly measurable $f:T \rightarrow X$ and any nonnegative finite measure $\mu$ on $T$, $f$ is in particular \textit{scalarly bounded}, in the terminology of \cite{Pl}, so since $X$ is separable we deduce by Theorem 1 of \cite{Pl} that $f$ is \textit{Pettis integrable} in the sense employed there, which implies in particular that a Pettis integral $\pint f \;d\mu$ exists.

\paragraph*{(ii).}  For a reflexive Banach space $X$, if a function $f:T \rightarrow X$ is \textit{Dunford integrable} with respect to a finite nonnegative measure $\mu$ on $T$, meaning that $f$ is weakly measurable and each $\varphi \circ f$ is $\mu$-integrable ($\varphi \in X^*$), then $f$ is Pettis integrable and, in particular, there is a Pettis integral $\pint f \;d\mu$; see \cite{DU}, \S II.3.  Hence $X$ has enough Pettis integrals, since a bounded weakly measurable function $f:T \rightarrow X$ is Dunford integrable with respect to any finite nonnegative measure $\mu$ on $T$.
\end{proof}

\begin{RemSub}
Reflexive Banach spaces include all Hilbert spaces and all spaces $L^p(\mu)$ with $1 < p < \infty$ (see, e.g., \cite{Me}).
\end{RemSub}
\bibliographystyle{amsplain}
\bibliography{integ}

\end{document}